\numberwithin{equation}{section}
\numberwithin{figure}{section}
\theoremstyle{plain}
\newtheorem{thm}{\protect\theoremname}[section]
  \theoremstyle{definition}
  \newtheorem{defn}[thm]{\protect\definitionname}
  \theoremstyle{plain}
  \newtheorem{lem}[thm]{\protect\lemmaname}
  \theoremstyle{remark}
  \newtheorem{rem}[thm]{\protect\remarkname}
  \theoremstyle{remark}
  \newtheorem*{acknowledgement*}{\protect\acknowledgementname}
  \theoremstyle{plain}
  \newtheorem{prop}[thm]{\protect\propositionname}
  \theoremstyle{definition}
  \newtheorem{example}[thm]{\protect\examplename}
  \theoremstyle{plain}
  \newtheorem{cor}[thm]{\protect\corollaryname}
\title [Divergent Trajectories in Arithmetic Homogeneous Spaces]{Divergent Trajectories in Arithmetic Homogeneous Spaces of Rational Rank Two}
  \providecommand{\acknowledgementname}{Acknowledgement}
  \providecommand{\corollaryname}{Corollary}
  \providecommand{\definitionname}{Definition}
  \providecommand{\examplename}{Example}
  \providecommand{\lemmaname}{Lemma}
  \providecommand{\propositionname}{Proposition}
  \providecommand{\remarkname}{Remark}
\providecommand{\theoremname}{Theorem}
\begin{document}
\global\long\def\bbc{\mathbb{C}}

\global\long\def\bbr{\mathbb{R}}

\global\long\def\bbq{\mathbb{Q}}

\global\long\def\bbz{\mathbb{Z}}

\global\long\def\bbn{\mathbb{N}}

\global\long\def\norm#1{\left\Vert #1\right\Vert }

\global\long\def\frakg{\mathfrak{g}}

\global\long\def\frakh{\mathfrak{h}}

\global\long\def\fraka{\mathfrak{a}}

\global\long\def\frakp{\mathfrak{p}}

\global\long\def\fraku{\mathfrak{u}}

\global\long\def\frakd{\mathfrak{d}}

\global\long\def\frakt{\mathfrak{t}}

\global\long\def\fraks{\mathfrak{s}}

\global\long\def\calb{\mathcal{B}}

\author{Nattalie Tamam}

\date{\today}

\address{Department of Mathematics, Tel Aviv University, Tel Aviv, Israel}

\email{natalita@post.tau.ac.il}
\begin{abstract}
Let $G$ be a semisimple real algebraic group defined over $\bbq$, $\Gamma$
be an arithmetic subgroup of $G$ and $T$ be a maximal $\bbr$-split
torus. A trajectory in $G/\Gamma$ is divergent if eventually it leaves
every compact subset. In some cases there is a finite collection of
explicit algebraic data which account for the divergence. If this
is the case, the divergent trajectory is called obvious. Given a closed
cone in $T$, we study the existence of non-obvious divergent trajectories
under its action in $G/\Gamma$. We get a sufficient condition for
the existence of a non-obvious divergence trajectory in the general
case, and a full classification under the assumption that $\mbox{rank}_{\bbq}G=\mbox{rank}_{\bbr}G=2$. 
\end{abstract}

\maketitle

\section{Introduction }

Let $G$ be a semisimple real algebraic group defined over $\bbq$, $\Gamma$
be an arithmetic subgroup of $G$ and $A\subset G$ be a semigroup.
The action of $A$ on $G/\Gamma$ induces a flow on $G/\Gamma$. The
ergodic theory of these flows is extensively studied and so is the
behavior of a generic trajectory. Some sets of exceptional trajectories
are related to classical problems in number theory (see \cite{key-15}).
A special class of such exceptional trajectories are the divergent
trajectories. It was proved by Dani \cite{key-11} that these trajectories
are related to singular systems of linear forms which are studied
in the theory of Diophantine approximation.

A trajectory $Ax$ in $G/\Gamma$ is called divergent if the map $a\mapsto ax$,
$a\in A$, is proper. In some cases one can find a simple algebraic
reason for the divergence. For example, consider the space of unimodular
lattices $\mbox{SL}_{d}\left(\bbr\right)/\mbox{SL}_{d}\left(\bbz\right)$
with the action of a one-parameter diagonalizable subgroup $\left\{ a_{t}\::\: t\in\bbr\right\} $.
It follows from Mahler’s compactness criterion that if one can find
a non-zero vector $v\in\bbz^{d}$ such that 
\[
\norm{a_{t}x\cdot v}\underset{t\rightarrow\infty}{\longrightarrow}0,
\]
then the trajectory $\left\{ a_{t}x\::\: t\geq0\right\} $ is divergent.
There are more complicated cases in which one has a sequence of different
non-zero vectors $v_{t}$ in $\bbz^{d}$ such that $\norm{a_{t}v_{t}}\rightarrow0$
as $t\rightarrow\infty$. Such trajectories are also divergent but
the divergence is not due to one vector $v$.

Given $A$, some natural questions are: \textbf{Are there divergent
trajectories for the action of $A$ on $G/\Gamma$? Can one always
find a simple reason for the divergence? }

For the case discussed in the above example a classification was proved
by Dani \cite{key-11}. He showed that if $d=2$ all divergent trajectories
are 'degenerate' and if $d\geq3$ then there exists a 'non-degenerate'
divergent trajectory. What is the difference between these two cases?
It seems that the heart of the matter is the rational rank of $G$.
In \cite{key-5} it was conjectured by Weiss that if $A$ is a diagonal
subgroup, then the existence of divergent trajectories and of 'non-obvious'
divergent trajectories depend only on the relation between the dimension
of $A$ and the rational rank of $G$. See Definition \ref{def:obvious} and Definition \ref{degenerate} for the definitions of obvious and degenerate divergent trajectories, respectively, and \cite{key-3,key-5,key-16}
for results regarding the conjecture. 

Assume $\mbox{rank}_{\bbq}G=\mbox{rank}_{\bbr}G=2$. Let $A$ be a two-dimensional closed cone in $G$ (see Definition \ref{def:closed cone} below). Then there are obvious divergent trajectories under the action of $A$. We will show that the existence of non-obvious divergent trajectories under the action of $A$ depends on its intersection with some chambers which are defined using the $\bbq$-fundamental weights of $G$. See \S \ref{sec:The-Main-Results} for the definition of the $\bbq$-fundamental weights. These results generalize Corollary 4.7 and Theorem 4.8 in \cite{key-5}.

We now introduce the terminology and notation we need for stating our main results: Theorem \ref{thm: all obvoius}, which provides a criterion for non-existence
of non-obvious divergent trajectories and Theorem \ref{thm: exists non-obvious}, which provides a criterion for existence of non-obvious divergent trajectories.

\subsection{\label{sec:The-Main-Results}The Main Results }

Let $G$ be a semisimple real algebraic group defined over $\bbq$, $\Gamma$ be an arithmetic subgroup of $G$ (with respect to the given $\bbq$-structure). Denote by $\pi$ the quotient map $G\rightarrow G/\Gamma$, $g\mapsto g\Gamma$.

\begin{defn}
Let $x\in G/\Gamma$ and $A\subset G$. A trajectory $Ax$ is \textbf{divergent} if for any compact subset $K\subset G/\Gamma$ there is a compact $C\subset A$ such that 
\[
h\in A\setminus C\;\Rightarrow\; hx\notin K.
\]
\end{defn}
The easily described divergent trajectories are defined as follows. 
\begin{defn}\label{def:obvious}
Let $g\in G$ and let $A\subset G$ be a semigroup. A trajectory $A\pi\left(g\right)\subset G/\Gamma$
is called an\textbf{ obvious divergent trajectory} if for any unbounded
sequence $\left\{ a_{k}\right\} \subset A$ there is a sub-sequence
$\left\{ a_{k}^{\prime}\right\} \subset\left\{ a_{k}\right\} $, a
$\mathbb{Q}$-representation $\varrho:G\rightarrow\mbox{GL}\left(V\right)$,
and a non-zero $v\in V\left(\mathbb{Q}\right)$ such that 
\[
\varrho\left(a_{k}^{\prime}g\right)v\underset{k\rightarrow+\infty}{\longrightarrow}0.
\]
\end{defn}

A proof that an obvious divergent trajectory is indeed divergent can be found in \cite{key-5}. 

The notion of degenerate divergent trajectories was defined for one parameter semigroups of $G$ in \cite{key-11}. It is similar to the definition of obvious divergent trajectories, but under the consideration of a more restricted class of representations. So, the set of degenerate divergent trajectories is a-priori smaller then the set of obvious divergent trajectories. The following is a generalization of this definition for any semigroup of $G$. 

\begin{defn}\label{degenerate}
Let $g\in G$ and let $A\subset G$ be a semigroup. A trajectory $A\pi\left(g\right)\subset G/\Gamma$ is called a \textbf{degenerate divergent trajectory} if for any unbounded sequence $\left\{a_{k}\right\}\subset$ A there is a sub-sequence $\left\{ a_{k}^{\prime}\right\} \subset\left\{ a_{k}\right\}$, a $\mathbb{Q}$-representation $\varrho:G\rightarrow\mbox{GL}\left(V\right)$, and a nonzero $v\in V\left(\mathbb{Q}\right)$ such that
\begin{enumerate}
\item $\varrho\left(a_{k}^{\prime}g\right)v\underset{k\rightarrow+\infty}{\longrightarrow}0$. 
\item $G_{\left[v\right]}=\left\{ g\in G\::\:\varrho\left(g\right)v\mbox{ is a scalar multiple of }v\right\}$  is a parabolic subgroup of $G$.
\end{enumerate}
\end{defn}
Let $\frakg$ be the Lie algebra of $G$. Equip $\frakg$ with a $\bbq$-structure
which is $\mbox{Ad}\left(\Gamma\right)$-invariant. Fix some rational
basis for $\frakg$ and denote its $\bbz$-span by $\frakg_{\bbz}$.
Let $S$ be a maximal $\bbq$-split torus, and let $T$ be a maximal
$\bbr$-split torus which contains $S$. Denote by $\mathfrak{t}$
and $\mathfrak{s}$ the Lie algebras of $T$ and $S$ respectively. 
\begin{defn}
\label{def:closed cone}A semigroup $A\subset T$ is called a
\textbf{closed cone} if there is a connected subgroup $T_{0}\subset T$, finitely many characters $\lambda_{1},\dots,\lambda_{k}\in T^{*}$
and non-negative $p_{1},\dots,p_{k}$ such that 
\[
A=\left\{a\in T_{0}\::\:\forall i=1,\dots,k,\;\lambda_{i}\left(a\right)\geq p_{i}\right\} .
\]
\end{defn}
It is shown in \cite{key-5} that obvious divergent trajectories for
the action of closed cones are determined by finitely many rational
representations and finitely many rational vectors.

Denote by $\Phi_{\bbq}$ the set of $\bbq$-roots in $\mathfrak{s}^{*}$. 
Denote by $W\left(\Phi_{\bbq}\right)$ the Weyl group associated with
$\Phi_{\bbq}$, i.e. the group generated by the reflections $\omega_{\lambda}$,
$\lambda\in\Phi_{\bbq}$, defined by 
\begin{equation}
\omega_{\lambda}\left(\chi\right)=\chi-\left\langle \chi,\lambda\right\rangle\lambda\label{eq: Weyl group def}
\end{equation}
for any characters $\chi$ in $\fraks^{*}$ ($\left\langle \cdot,\cdot \right\rangle$ will be defined in \S \ref{sub:real rep}).

There exists a subset $\Delta_{\bbq}\subset\Phi_{\bbq}$
such that any $\lambda\in\Phi_{\bbq}$ can be expressed uniquely as
a linear combination 
\begin{equation}
\lambda=\sum_{\alpha\in\Delta_{\bbq}}m_{\alpha}\left(\lambda\right)\alpha\label{eq: lambda=00003Dsum n(lambda) mu}
\end{equation}
 in which each $m_{\alpha}\left(\lambda\right)\in\bbz$ and either
all $m_{\alpha}\left(\lambda\right)\geq0$ or all $m_{\alpha}\left(\lambda\right)\leq0$
. The set $\Delta_{\bbq}$ is called a $\bbq$-simple system and each
$\alpha\in\Delta_{\bbq}$ is called a $\bbq$-simple root. For $\lambda_{1},\lambda_{2}\in\fraks^{*}$
we write $\lambda_{1}\geq\lambda_{2}$ if $\lambda_{1}-\lambda_{2}$
can be written as a linear combination of $\bbq$-simple roots with
non-negative coefficients, and $\lambda_{1}>\lambda_{2}$ if $\lambda_{1}\geq\lambda_{2}$
and $\lambda_{1}\neq\lambda_{2}$. 

Fix
\begin{equation}
\Delta_{\bbq}=\left\{ \alpha_{1},\dots,\alpha_{r}\right\} .\label{eq: Delta def}
\end{equation}

Let $\chi_{1},\dots,\chi_{r}\in\fraks^{*}$ be the\textbf{ $\bbq$-fundamental
weights} of $\frakg$, i.e. for any $1\leq i,j\leq r$ 
\begin{equation}
\left\langle \chi_{i},\alpha_{j}\right\rangle =\delta_{i,j}\label{eq:fundamental weights}
\end{equation}
(Kronecker delta).

For $\lambda\in\frakt^{*}$ denote by $\lambda\mid_{\mathfrak{s}}$
the restriction of $\lambda$ to $\mathfrak{s}$.

\begin{rem}
	\label{rem:chi_tilde}
	According to \cite[\S8.5]{key-1} that exists $\frakt_{0}$ such that $\frakt$ is a direct sum of $\frakt_{0}$ and $\fraks$. Thus, for any character $\chi\in\fraks^*$ one can define $\tilde{\chi}\in\frakt^*$ uniquely by  $\tilde{\chi}\mid_{\fraks}=\chi$ and $\tilde{\chi}\mid_{\frakt_{0}}=0$. 
\end{rem}

Let $\norm{\cdot}$ be a norm on $\frakt$. 

Our main result provides a necessary condition for the existence of non-obvious divergence trajectories under the action of closed cones in $T$ on $G/\Gamma$. 
\begin{thm}
	\label{thm: all obvoius}Assume $\mbox{rank}_{\bbq}G=\mbox{rank}_{\bbr}G=2$.
	Let
	\begin{eqnarray*}
		\mathfrak{a}^{+} & = & \left\{ t\in\mathfrak{t}\::\:\chi_{1}\left(t\right)\geq0,\;\omega_{\alpha_{1}}\left(\chi_{1}\right)\left(t\right)\geq0\right\} ,\\
		A^{+} & = & \exp\left(\mathfrak{a}^{+}\right).
	\end{eqnarray*}
	Then for any closed cone $A\supset A^{+}$, there are only degenerate
	divergent trajectories for the action of $A$ on $G/\Gamma$. 
\end{thm}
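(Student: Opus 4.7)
The plan is to combine an arithmetic Mahler-type compactness criterion --- detecting cusp excursions of $G/\Gamma$ via shrinking rational vectors in the fundamental $\bbq$-representations --- with a weight-coordinate analysis on the cone $A^{+}$, using the rank-two hypothesis to force any divergence-witness to collapse to a single rational line.

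Fix $g \in G$ with $x = \pi(g)$ and let $\{a_{k}\} \subset A$ be an arbitrary unbounded sequence. Because $A \supset A^{+}$ and the closed-cone axiom of Definition \ref{def:closed cone} makes $\fraka \supset \fraka^{+}$ closed under addition of $\fraka^{+}$-elements, the sub-trajectory $A^{+}x$ is itself divergent. Reduction theory for $\Gamma$ in $G$, combined with the fact that the two fundamental representations $\varrho_{1},\varrho_{2}$ jointly detect the cusps of $G/\Gamma$ (as developed in \cite{key-5}), yields --- after passing to a subsequence --- a fixed index $i\in\{1,2\}$, say $i=1$, and primitive integral vectors $w_{k}\in V_{1}(\bbz)$ such that $\|\varrho_{1}(a_{k}g)w_{k}\|\to 0$.

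Now decompose $V_{1}=\bigoplus_{\lambda}V_{1}^{\lambda}$ into $T$-weight spaces. By Lemma \ref{lem: unique chi_i} and Weyl symmetry, both the highest-weight space $V_{1}^{\chi_{1}'}$ and its Weyl-reflected counterpart $V_{1}^{\omega_{\alpha_{1}}(\chi_{1}')}$ are one-dimensional (with $\bbq$-rational coordinate functionals); denote these functionals $\phi^{+},\phi^{-}:V_{1}\to\bbr$. From the elementary bound $\|\varrho_{1}(a)v\|\geq e^{\lambda(\log a)}|\phi_{\lambda}(v)|$ one obtains, for $\lambda\in\{\chi_{1}',\omega_{\alpha_{1}}(\chi_{1}')\}$ and a subsequence of $\{a_{k}\}$ whose logarithms tend to infinity through the interior of $\fraka^{+}$ (provided by divergence of $A^{+}x$), that $|\phi^{+}(\varrho_{1}(g)w_{k})|,\ |\phi^{-}(\varrho_{1}(g)w_{k})|\to 0$.

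The vectors $v_{k}:=\varrho_{1}(g)w_{k}$ lie in the discrete lattice $\varrho_{1}(g)V_{1}(\bbz)$ and therefore approach the rational codimension-two subspace $\ker\phi^{+}\cap\ker\phi^{-}$. Since $\mathrm{rank}_{\bbq}G=\mathrm{rank}_{\bbr}G=2$, the two functionals $\chi_{1}'$ and $\omega_{\alpha_{1}}(\chi_{1}')$ are linearly independent and already span $\frakt^{*}$. Iterating the coordinate bound along the finite chain of remaining weights of $\varrho_{1}$ that are nonnegative on some direction of $\fraka^{+}$ trims the escape subspace down to a single rational line $\bbq v_{\infty}$. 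A Minkowski-style discreteness argument then forces $w_{k}=n_{k}w$ for a fixed primitive $w=\varrho_{1}(g)^{-1}v_{\infty}\in V_{1}(\bbq)$, certifying that $Ax$ is obvious.

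The main obstacle is precisely the final discreteness step: one must rule out the scenario in which the lattice points $v_{k}$ approach the codimension-two subspace $\ker\phi^{+}\cap\ker\phi^{-}$ through infinitely many distinct rational directions, in which case no single $w$ would witness the divergence. This demands a careful Minkowski-style count of lattice points in the pancake ellipsoids $\varrho_{1}(a_{k})^{-1}B_{\delta}$, using the rank-two hypothesis crucially: it is what keeps the weight diagram of $\varrho_{1}$ short enough that the two extremal functionals, augmented by a bounded further iteration, suffice to pin down the shrinking direction to one dimension.
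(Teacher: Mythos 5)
Your opening reduction is shaky but salvageable (if $Ax$ is proper on $A$ then it is proper on any closed subset $A^{+}\subset A$; the ``closed under addition'' justification you give is irrelevant), and the invocation of reduction theory to produce shrinking rational vectors $w_{k}\in V_{i}(\bbz)$ with $\|\varrho_{i}(a_{k}g)w_{k}\|\to 0$ is essentially Corollary \ref{cor:active elements}. But from there the argument has a genuine hole that the paper's proof is specifically engineered to fill, and which you yourself flag at the end as unresolved.

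First, your weight-coordinate estimate
$\|\varrho_{1}(a)v\|\geq e^{\lambda(\log a)}|\phi_{\lambda}(v)|$
only forces $|\phi^{\pm}(\varrho_{1}(g)w_{k})|\to 0$ when $\chi_{1}'(\log a_{k})$ and $\omega_{\alpha_{1}}(\chi_{1}')(\log a_{k})$ both tend to $+\infty$, i.e.\ when $\log a_{k}$ escapes through the \emph{interior} of $\fraka^{+}$. But the definition of obvious divergence quantifies over \emph{every} unbounded sequence in $A\supset A^{+}$; such a sequence need not admit a subsequence whose logarithms go through the interior of $\fraka^{+}$ (it may stay near $\partial\fraka^{+}$, or lie entirely in $A\setminus A^{+}$). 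You cannot trade the given $\{a_{k}\}$ for a friendlier sequence in $A^{+}$ without changing what you are proving, so the argument as written addresses only a measure-zero family of sequences.

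Second, and more fundamentally, the ``Minkowski-style discreteness'' step that is supposed to collapse the escape directions to a single rational line is exactly the crux, and nothing you write makes it work. Controlling two weight functionals $\phi^{\pm}$ leaves a codimension-two subspace; ``iterating along the finite chain of remaining weights'' is not a defined operation and does not obviously trim further, because there is no reason the other weight coordinates of $\varrho_{1}(g)w_{k}$ are small for $a_{k}$ in the given direction. The paper's solution is Theorem \ref{thm:weyl elem norm bound}: for a strongly-rational-over-$\bbr$ representation, $\|\varrho(g')v\|$ is controlled by the maximum over Weyl conjugates $\omega$ of $\|\varphi_{\omega(\chi)}(\varrho(g')v)\|$, applied with $g'=gq(a_{k}g)$. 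Combined with Lemma \ref{lem: not both pos} (which uses $\mathrm{rank}_{\bbq}G=2$ irreducibly) this forces exactly one of $\chi_{1},\omega_{\alpha_{1}}\chi_{1}$ to be nonvanishing on any connected piece of the $(\eta,1)$-active set --- this is Theorem \ref{thm: conn component intersects both rays}, and it is the rigorous version of your coordinate-trimming heuristic. The assumption $\mathrm{rank}_{\bbr}G=\mathrm{rank}_{\bbq}G$ is needed there precisely so the $\bbr$-Weyl group and $\bbq$-Weyl group agree, as the example following Theorem \ref{thm:weyl elem norm bound} warns.

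Third, even with that tool in hand, one must still rule out the $w_{k}$ wandering through infinitely many distinct rational flags. The paper handles this not by lattice-point counting in ellipsoids but by a topological argument on the activity sets $\frakd_{1},\frakd_{2}\subset\fraka^{+}$: Lebesgue's covering theorem (Theorem \ref{thm:Lebesgue}) and the Riemann mapping theorem (Theorem \ref{thm:Riemann}) are used to locate unbounded connected components of $\overline{\frakd_{2}}$, and on each such component Proposition \ref{prop: unique} together with Corollary \ref{cor: vi under Pi and Q'0} pins the rational vector $v_{\mathcal{P}}$ down; a wedge-product and minimality trick then reduces to finitely many $v_{\Psi}$ and yields (\ref{eq: end of proof}). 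Your proposal contains none of this machinery and, absent it, I do not see how a Minkowski-type count can substitute: the problem is not that too many lattice points enter a fixed region, it is that the escaping lattice point may change direction as $a_{k}$ sweeps the cone, and connectivity of the activity sets is what prevents this.

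In short: the strategy of bounding weight coordinates is in the right spirit, but it has no theorem behind it (you need Theorem \ref{thm:weyl elem norm bound}), it only addresses sequences in the interior of $\fraka^{+}$, and the final discreteness step --- which you acknowledge to be the main obstacle --- is left open; it is in fact the topological core of the paper's proof.
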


Our second result provides a sufficient condition for the existence of a non-obvious divergence trajectory under the action of closed cones in $T$ on $G/\Gamma$. In particular, it shows that when $\mbox{rank}_{\bbq}G=\mbox{rank}_{\bbr}G=2$, any closed cone which does not satisfy the assumption of Theorem \ref{thm: all obvoius},
does not satisfy its conclusion as well. See Figure \ref{fig: A_epsilon}. 

\begin{thm}
\label{thm: exists non-obvious}Assume $\mbox{rank}_{\bbq}G\geq2$.
Let $\epsilon>0$ and let 
\begin{eqnarray*}
\mathfrak{a}_{\epsilon} & = & \left\{ t\in\mathfrak{t}\::\:\tilde{\chi_1}\left(t\right)\geq\epsilon\norm t,\;\tilde{\chi_2}\left(t\right)\geq\epsilon\norm t\right\} ,\\
A_{\epsilon} & = & \exp\left(\mathfrak{a}_{\epsilon}\right).
\end{eqnarray*}
Then for any unbounded closed cone $A\subset A_{\epsilon}$ there exists a non-obvious
divergent trajectory for the action of $A$ on $G/\Gamma$. 
\end{thm}

\begin{figure}[h]
\includegraphics[scale=0.75]{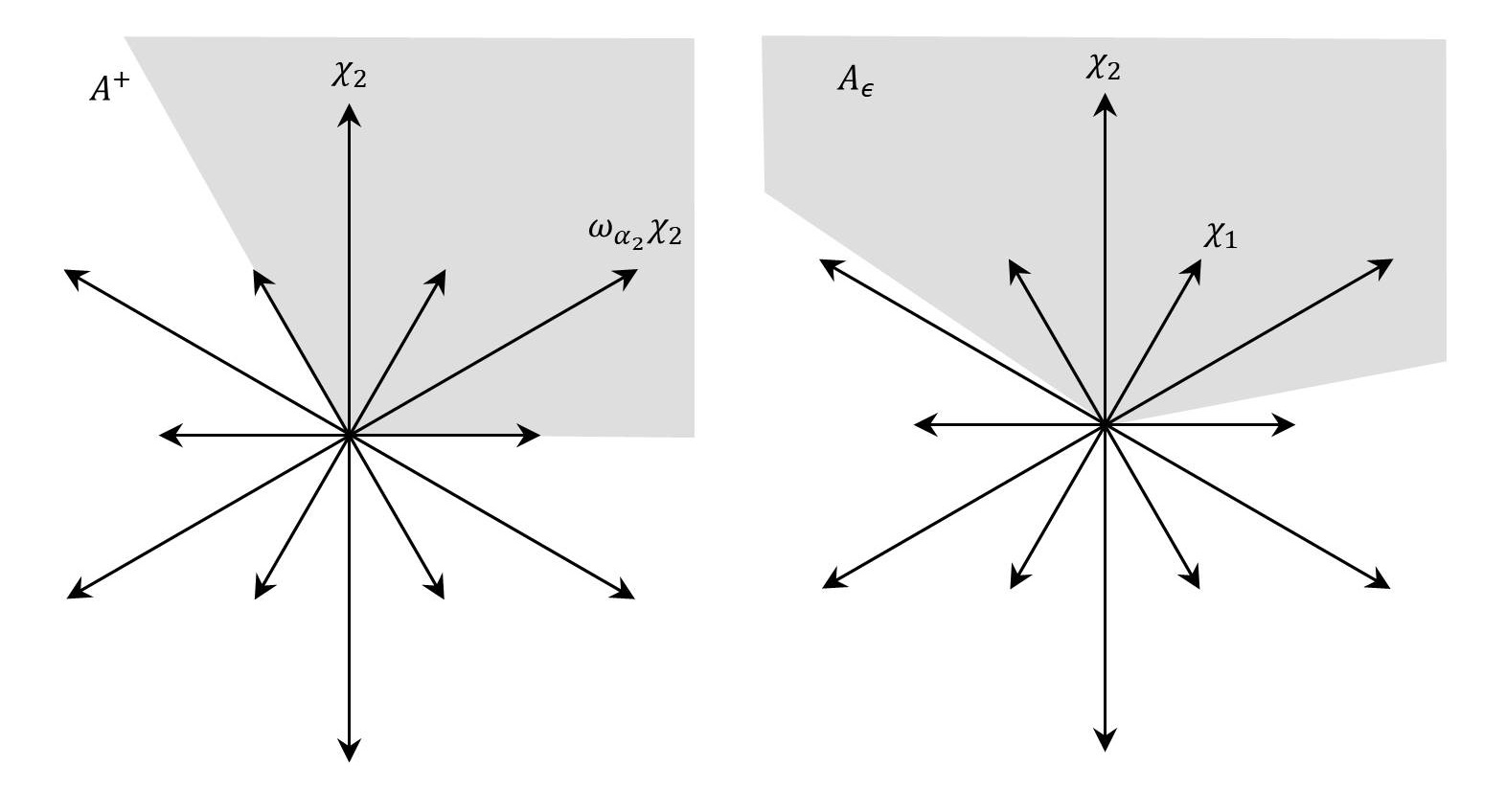}

\protect\caption{\label{fig: A_epsilon}Depiction of $A^{+}$ (as defined in
Theorem \ref{thm: all obvoius}) and $A_{\epsilon}$ (as defined in
Theorem \ref{thm: exists non-obvious}) for the standard basis of
$\protect\frakg_{2}$. }
\end{figure}

\begin{rem}
A different choice of a simple system or a different indexing of it
in (\ref{eq: Delta def}) will result in a different closed cone in
Theorem \ref{thm: all obvoius}. In particular, when $\mbox{rank}_{\bbq}G=2$
the two indexing options of the standard basis will result in two
different closed cones which are not images of each other by the action
of the Weyl group. The Weyl group acts simply transitively on simple
systems. Thus, up to the action of the Weyl group, these two examples
give all the closed cones which may appear in Theorem \ref{thm: all obvoius}.
See figure \ref{fig: A_cases}.
\end{rem}
\begin{figure}[h]
\includegraphics[scale=0.75]{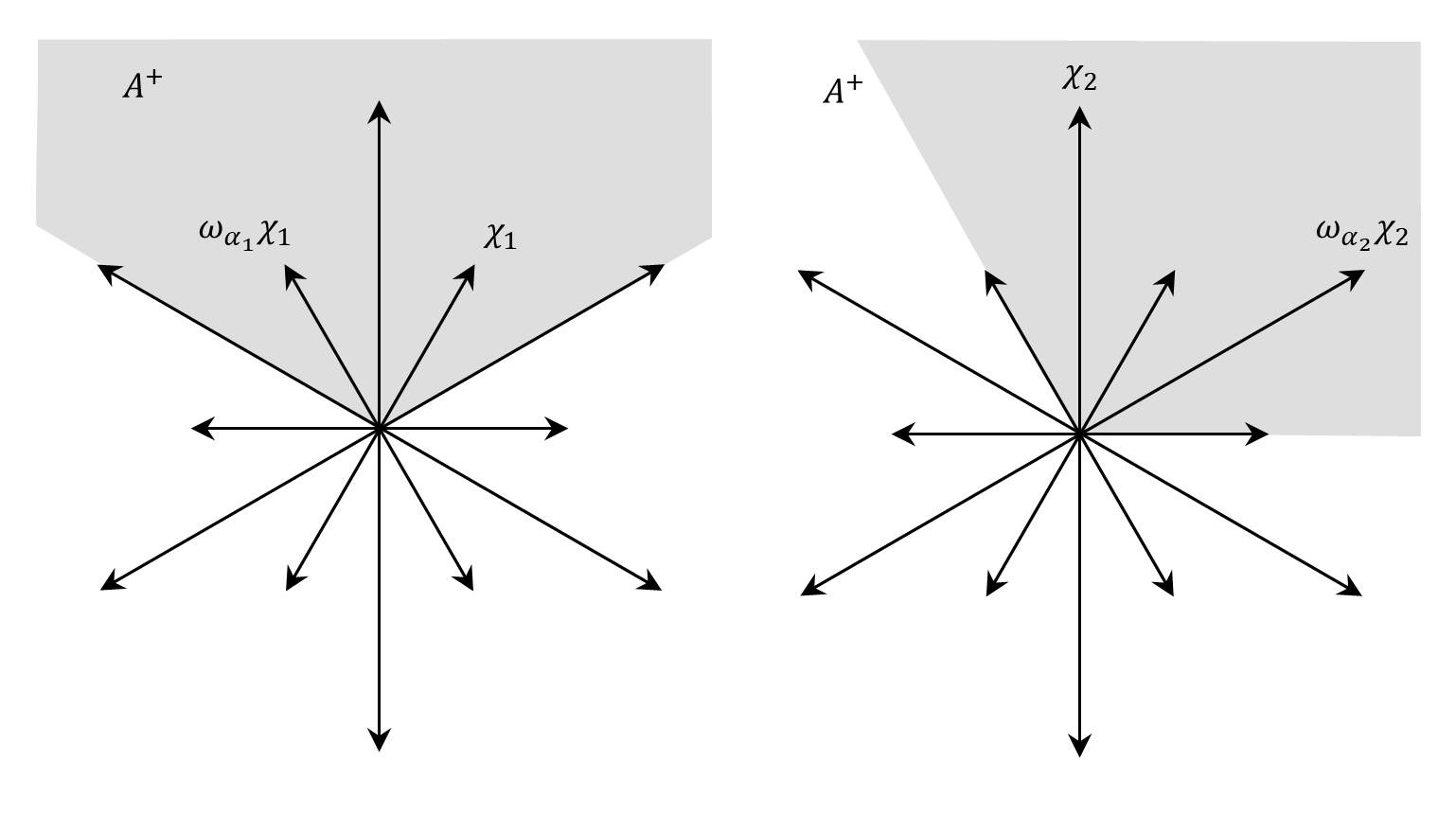}

\protect\caption{\label{fig: A_cases}Depiction of $A^{+}$ (as defined in Theorem
\ref{thm: all obvoius}) for the two indexing options of the standard
basis of $\protect\frakg_{2}$.}
\end{figure}

\begin{rem}
\label{rem:reducible}It follows from \cite[\S 2.15]{key-27} that up to a finite index $G$ is an almost 
direct product of its $\bbq$-almost simple $\bbq$-factors. By \cite[\S 5.11]{key-27}
the relative root system of each $\bbq$-almost simple $\bbq$-factor is irreducible. 
Hence, if $\Phi_{\bbq}$ is a reducible root system of rational rank two, 
then up to a finite index $G$ is an almost direct product of $\bbq$-subgroups of $G$,
each of rational rank one. In that case, Theorem \ref{thm: all obvoius} follows from \cite[Theorem 6.1]{key-11}. 
Hence, in the proof of Theorem \ref{thm: all obvoius}
we may assume $\Phi_{\bbq}$ is irreducible. 
\end{rem}

\subsection{Structure}

Theorem \ref{thm: all obvoius} is proved in \S \ref{sec:Proof-of-Theorem2}.
We start by looking at a divergent trajectory under the action of a closed cone. In \S \ref{sec:Compactness-criterion} a compactness criterion that can be deduced from \cite[\S 3]{key-3} is stated. Using the compactness
criterion we can attach to each element in the cone its 'reason for divergence'. The rank assumption in the theorem implies that there are essentially two such reasons. We denote elements in the corresponding sets by $\frakd_{1},\frakd_{2}$. Then, up to a compact set, $\frakd_{1}$ and $\frakd_{2}$ form a cover of $\mbox{Lie}\left(A\right)$. We then use a topological property of $\frakd_{1}$ in order to prove the existence of an unbounded connected component in $\frakd_{2}$. This topological property is stated in
Theorem \ref{thm: conn component intersects both rays}. As an important step in the proof of Theorem \ref{thm: conn component intersects both rays}, in \S \ref{sec:highest weight} we prove Theorem \ref{thm:weyl elem norm bound}
regarding the norm of the image of a highest weight vector. In the proof of Theorem \ref{thm: conn component intersects both rays} we also use properties of real representations which are proved in \S \ref{sub:weights} as well as corollaries of the compactness criterion. 
Once we know that there exist unbounded connected components in $\frakd_{1},\frakd_{2}$, we use Proposition \ref{prop: unbounded element} to find a 'nice' vector in each unbounded component. By using similar arguments if necessary, we then show that there is a finite set of representations and vectors (which were chosen to satisfy the second condition of Definition \ref{degenerate}) which 'cause' the divergence, proving the trajectory is a degenerate divergent trajectory. 

Theorem \ref{thm: exists non-obvious} is proved in \S \ref{sec:Proof-of-Theorem 1}.
This proof can be read independently of previous sections (apart from notation which appear in \S \ref{sec: lie algebras} and \S \ref{sec:Compactness-criterion}). 

\begin{acknowledgement*}
I would like to thank my advisor Barak Weiss for his guidance, support
and encouragement as well as many helpful discussions. 

This research was supported by the Ministry of Science, Technology
and Space, Israel, by the European Research Council (ERC) starter
grant DLGAPS 279893, and by the Israel Science Foundation (ISF) grant
2095/15. 
\end{acknowledgement*}

\section{\label{sec: lie algebras}Preliminary Results}

\subsection{\label{sub:real rep}Real Representations}

Denote by $\Phi_{\bbr}$ the set of $\bbr$-roots. For $\lambda\in\Phi_{\bbr}$ denote by $\frakg_{\lambda}$ the
$\bbr$-root space for $\lambda$. 

Let $\kappa$ be the Killing form on $\frakg$ and $\theta$ be the
Cartan involution associated with $\kappa$. For $\lambda\in\mathfrak{t}^{*}$ let $t_{\lambda}\in\mathfrak{t}$ be determined by $\lambda\left(t\right)=\kappa\left(t_{\lambda},t\right)$
for all $t\in\mathfrak{t}$. For $\lambda_{1},\lambda_{2}\in\mathfrak{t}^{*}$ let 
\[
\left(\lambda_{1},\lambda_{2}\right) =\kappa\left(t_{\lambda_{1}},t_{\lambda_{2}}\right).
\]

Let $\rho:\frakg\rightarrow\mathfrak{gl}\left(V\right)$ be an  $\bbr$-representation of $\frakg$. For $\lambda\in\frakt^{*}$ denote
\[
V_{\rho,\lambda}=\left\{ v\in V\::\:\forall t\in\frakt\quad\rho\left(t\right)v=\lambda\left(t\right)v\right\} .
\]
If $V_{\rho,\lambda}\neq\left\{ 0\right\} $, then $\lambda$ is
called an \textbf{$\bbr$-weight for $\rho$}. Denote by $\Phi_{\rho}$
the set of $\bbr$-weights for $\rho$. For any $\lambda\in\Phi_{\rho}$,
$V_{\rho,\lambda}$ is called \textbf{the $\bbr$-weight
	vector space for $\lambda$}, and members of $V_{\rho,\lambda}$
are called \textbf{$\bbr$-weight vectors for $\lambda$}.

Roots and weights are related by 
\begin{equation}
\rho\left(\frakg_{\lambda}\right)V_{\rho,\chi}\subset V_{\rho,\lambda+\chi},\quad\lambda\in\Phi_{\bbr},\;\chi\in\Phi_{\rho}.\label{eq: lambda acts on chi}
\end{equation}

For any $\lambda,\mu\in\frakt^{*}$ denote 
\begin{equation}
\left\langle\lambda,\mu\right\rangle=2\frac{\left(\lambda,\mu\right) }{\left(\lambda,\lambda\right)}.\label{eq: l_lambda,mu def}
\end{equation}
By \cite[\S 5]{key-27}, for any $\lambda\in\Phi_{\bbr}$, real representation
$\rho$, and $\mu\in\Phi_{\rho}$ 
\begin{equation}
\left\langle\lambda,\mu\right\rangle\mbox{ is an integer.}\label{eq: l integer}
\end{equation}

The Lie algebra $\mathfrak{sl}\left(2,\bbr\right)$ has a basis $H,X,Y$
which satisfies 
\begin{equation}
\mbox{ad}\left(H\right)X=2X,\quad\mbox{ad}\left(H\right)Y=-2Y,\quad\mbox{ad}\left(X\right)Y=-H.\label{eq:sl2} 
\end{equation}

\begin{lem}
	\label{lem: sl(2)}\cite[\S VIII.1.2]{key-25}Let $\left(V,\rho\right)$ be an irreducible real representation of $\mathfrak{sl}\left(2,\bbr\right)$ with dimension $k$.
	Assume $V$ is generated by an element $v\in V$ such that $\rho\left(Y\right)v=0$ and
	$\rho\left(H\right)v=\lambda v$ for some $\lambda\neq0$. Then $-\lambda=k-1$.
\end{lem}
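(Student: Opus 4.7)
The plan is to apply standard $\mathfrak{sl}_2$ representation theory, noting that with the sign convention in (\ref{eq: sl(2,R)}) the operator $X$ raises $H$-weights by $2$ while $Y$ lowers them by $2$; hence $v$ plays the role of a lowest weight vector, and the lemma reduces to the familiar fact that the irreducible $k$-dimensional $\mathfrak{sl}_2$-module has lowest weight $-(k-1)$. Concretely, from $[H,X]=2X$ each $X^j v$ is an $H$-weight vector of weight $\lambda + 2j$, so the vectors $v, Xv, X^2 v,\ldots$ have pairwise distinct weights and are linearly independent whenever nonzero. Finite-dimensionality of $V$ produces a minimal $n$ with $X^{n+1}v=0$, and the span $W$ of $v, Xv, \dots, X^n v$ is $\mathfrak{sl}_2$-invariant: closure under $X$ and $H$ is immediate, and closure under $Y$ follows by induction from $Yv=0$ together with $YX = XY + H$ (a consequence of $[X,Y]=-H$). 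By irreducibility of $V$ and the hypothesis that $v$ generates $V$, we get $W=V$, hence $n = k-1$.

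To pin down $\lambda$, I would write $Y X^j v = c_j X^{j-1}v$ and use the identity $Y X^{j+1}v = X Y X^j v + H X^j v$ to obtain the recursion $c_{j+1} = c_j + \lambda + 2j$ with $c_1 = \lambda$, giving the closed form $c_j = j(\lambda + j - 1)$. Applying $Y$ to the relation $X^k v = 0$ then yields $c_k X^{k-1}v = 0$, and since $X^{k-1}v \neq 0$ we conclude $k(\lambda + k - 1) = 0$, i.e., $\lambda = -(k-1)$, as required.

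No single step looks genuinely delicate; the only point requiring care is the non-standard sign $[X,Y] = -H$ in (\ref{eq: sl(2,R)}), which is the source of the minus sign in the conclusion and would flip to the usual $\lambda = k-1$ under the opposite convention. The hypothesis $\lambda \neq 0$ is not used in the computation itself; it merely excludes the trivial one-dimensional module, where the conclusion $k=1$ is consistent but uninteresting.
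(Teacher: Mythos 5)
Your proof is correct, and since the paper offers no proof of its own here (it simply cites Bourbaki, Chapter~VIII), your argument is essentially the standard one the reference would give, adapted to the paper's sign convention $\mathrm{ad}(X)Y=-H$. You correctly track that $v$ is a \emph{lowest}-weight vector, that the string $v, Xv, \dots, X^{k-1}v$ spans $V$, and that the recursion $c_{j+1}=c_j+\lambda+2j$ with $c_1=\lambda$ yields $c_j=j(\lambda+j-1)$, whence $c_k=0$ forces $\lambda=-(k-1)$; your side remark that the hypothesis $\lambda\neq 0$ only rules out the trivial module is also accurate.
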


The following lemma follows from the proof of Proposition 6.52 in  \cite{key-7}. 

\begin{lem}
	\label{lem: 1} Let $\lambda\in\Phi_{\bbr}$ and $E\in\frakg_{\lambda}$ be non-zero. 
	For some $c>0$ the elements $X=cE$, $Y=\theta X$, and $H=\frac{2}{\left( \lambda,\lambda\right)}t_\lambda$ satisfy \eqref{eq:sl2}. In particular, 
	$\mathfrak{v}=\bbr H \oplus \bbr X \oplus \bbr Y$ is a Lie subalgebra of $\frakg$ isomorphic to $\mathfrak{sl}\left(2,\bbr\right)$.
\end{lem}


\begin{lem}
	\label{lem: =00005BX_lambda ,X_mu=00005D}Let $(\rho,V)$ be a finite dimensional real
	representation of $\frakg$, $\lambda\in\Phi_{\bbr}$, $\mu\in\Phi_{\rho}$, $E\in\frakg_{\lambda}$, and $v\in V_{\rho,\mu}$.
	Assume $\lambda-\mu\notin\Phi_{\rho}$. Then $\left\langle\lambda,\mu\right\rangle\leq 0$ and if  
	\begin{equation}
	\rho\left(E\right)^{l}v=0,\label{eq:lambda l mu=0}
	\end{equation}
	for some non-negative $l\leq -\left\langle\lambda,\mu\right\rangle$, then $E=0$ or $v=0$. \end{lem}
\begin{proof}
	According to \cite[Lem. 10.3]{ha}, $\lambda-\mu\notin\Phi_{\rho}$ implies  $\left\langle\lambda,\mu\right\rangle\leq 0$. 
	
	Assume by contradiction that there exists $l\leq \left\langle\lambda,\mu\right\rangle$,
	such that (\ref{eq:lambda l mu=0}) is satisfied.
	Let $X,Y,H,\mathfrak{v}$ be defined as in Lemma \ref{lem: 1}  for $E$. 
	
	Denote 
	\[
	W=\mbox{span}_{\bbr}\left\{ v,\rho\left(X\right)v,\dots,\rho\left(X\right)^{l-1}v\right\} .
	\]
	It follows from \eqref{eq: lambda acts on chi} and $\lambda-\mu\notin\Phi_{\rho}$ that $\rho\left(Y\right)v=0$.
	Thus, $W$ is invariant under the action of $\mathfrak{v}$. Therefore, $(\rho,W)$ is an irreducible real representation of 
	$\mathfrak{v}$ with dimension $l$. In addition, we have $\rho\left(H\right)v=\mu\left(H\right)v$
	and 
	\[
	\mu\left(H\right)=\kappa\left(t_{\mu},H\right)=\kappa\left(t_{\mu},\frac{2}{\left( \lambda,\lambda\right)}t_\lambda\right)=2\frac{\left( \mu,\lambda\right) }{\left( \lambda,\lambda\right)}=\left\langle \lambda,\mu\right\rangle.
	\]
	Hence, by Lemma \ref{lem: sl(2)} we get $-\left\langle \lambda,\mu\right\rangle= l-1$,
	a contradiction. 
\end{proof}

\begin{lem}
	\label{lem: =00005BX_lambda ,X_mu=00005D general}Let $\lambda,\mu\in\Phi_{\bbr}$, $X_{\lambda}\in\frakg_{\lambda}$,
	$X_{2\lambda}\in\frakg_{2\lambda}$, and $X_{\mu}\in\frakg_{\mu}$.
	Assume $\lambda-\mu\notin\Phi_{\bbr}\cup\left\{ 0\right\}$. Then,  $l=-\left\langle\lambda,\mu\right\rangle\geq0$, and 
	\begin{equation}
	\sum_{k\leq\left[\frac{l}{2}\right]}\frac{1}{k!}\cdot\frac{1}{\left(l-2k\right)!}\cdot\mbox{ad}\left(X_{2\lambda}\right)^{k}\mbox{ad}\left(X_{\lambda}\right)^{l-2k}X_{\mu}=0,\label{eq: =00005BX_lambda ,X_mu=00005D general}
	\end{equation}
	implies that $X_{\lambda}=X_{2\lambda}=0$ or $X_{\mu}=0$. \end{lem}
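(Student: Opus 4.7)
My plan is to reduce the lemma to Lemma~\ref{lem: =00005BX_lambda ,X_mu=00005D} via a case split on whether $2\lambda\in\Phi_\bbr$. In the easy case $2\lambda\notin\Phi_\bbr$ (in particular whenever the restricted root system is reduced), we have $\frakg_{2\lambda}=\{0\}$, so $X_{2\lambda}=0$. The entire sum collapses to its $k=0$ term, $\tfrac{1}{l!}\mathrm{ad}(X_\lambda)^{l}X_\mu$, and the hypothesis becomes $\mathrm{ad}(X_\lambda)^{l}X_\mu=0$. Applying Lemma~\ref{lem: =00005BX_lambda ,X_mu=00005D} to the adjoint representation with $X=X_\lambda$ and $v=X_\mu$ (the hypothesis $\lambda-\mu\notin\Phi_\bbr\cup\{0\}$ gives exactly the condition needed), we conclude that $X_\lambda=0$ or $X_\mu=0$, finishing this case.

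For the remaining case $2\lambda\in\Phi_\bbr$, I would first invoke the standard structural fact that in a restricted real root system the only positive multiples of a short root which are roots are $\lambda$ and $2\lambda$, so that $3\lambda\notin\Phi_\bbr$. By \eqref{eq: =00005Bg_lambda,g_mu=00005D}, $[X_\lambda,X_{2\lambda}]\in\frakg_{3\lambda}=\{0\}$, hence $\mathrm{ad}(X_\lambda)$ and $\mathrm{ad}(X_{2\lambda})$ commute and the Baker--Campbell--Hausdorff formula gives $\exp(X_\lambda)\exp(X_{2\lambda})=\exp(X_\lambda+X_{2\lambda})$. Expanding $e^{\mathrm{ad}(X_\lambda)}e^{\mathrm{ad}(X_{2\lambda})}X_\mu$ and using that $\mathrm{ad}(X_\lambda)^{l-2k}\mathrm{ad}(X_{2\lambda})^{k}X_\mu\in\frakg_{\mu+l\lambda}$ exactly when $j=l-2k$ contributes to the $\frakg_{\mu+l\lambda}$-component, one sees that the hypothesis is precisely the assertion that the $\frakg_{\mu+l\lambda}$-component of $\mathrm{Ad}(\exp(X_\lambda+X_{2\lambda}))X_\mu$ vanishes. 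A brief contiguity argument on root strings shows that $\mu-\lambda\notin\Phi_\bbr\cup\{0\}$ forces $\mu-2\lambda\notin\Phi_\bbr\cup\{0\}$ as well (else $\mu,\mu-2\lambda$ roots would force the intermediate root $\mu-\lambda$); combined with the integrality \eqref{eq: l integer} applied to $l_{2\lambda,\mu}=l/2$, this proves $l$ is even.

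The main obstacle is then to conclude from the vanishing of that single weight-space component that $X_\lambda=X_{2\lambda}=0$ or $X_\mu=0$. To handle this I would mimic the $\mathfrak{sl}(2,\bbr)$-strategy used in the proof of Lemma~\ref{lem: =00005BX_lambda ,X_mu=00005D}, but applied in the rank-one real reductive subalgebra generated by $\frakg_{\pm\lambda}$, $\frakg_{\pm 2\lambda}$, and the relevant part of $\frakg_0$. The two conditions $\mu-\lambda,\mu-2\lambda\notin\Phi_\bbr\cup\{0\}$ imply both $\mathrm{ad}(\theta X_\lambda)X_\mu=0$ and $\mathrm{ad}(\theta X_{2\lambda})X_\mu=0$, so $X_\mu$ is annihilated by all the lowering operators of this rank-one subalgebra and thus generates an irreducible module of it. Within this module the $\frakg_{\mu+l\lambda}$-component is one-dimensional, spanned simultaneously by (a nonzero multiple of) $\mathrm{ad}(X_\lambda)^{l}X_\mu$ and by $\mathrm{ad}(X_{2\lambda})^{l/2}X_\mu$ once $X_\lambda,X_{2\lambda},X_\mu$ are all nonzero (each being the unique top vector in an $\mathfrak{sl}(2,\bbr)$-module generated by $X_\mu$ via Lemma~\ref{lem: =00005BX_lambda ,X_mu=00005D}, applied once to $(X_\lambda,X_\mu)$ and once to $(X_{2\lambda},X_\mu)$). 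A direct computation of the scalar coefficient of the sum in this one-dimensional space, using $[X_\lambda,X_{2\lambda}]=0$ to track how each summand $\mathrm{ad}(X_{2\lambda})^{k}\mathrm{ad}(X_\lambda)^{l-2k}X_\mu$ relates to the top vector, shows the coefficient is strictly positive; hence its vanishing is impossible unless one of $X_\lambda,X_{2\lambda},X_\mu$ is zero, and a final application of Lemma~\ref{lem: =00005BX_lambda ,X_mu=00005D} pins the remaining generator to zero as well. The positivity of this coefficient is the step I expect to require the most care.
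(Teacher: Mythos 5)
Your easy-case reduction ($2\lambda\notin\Phi_\bbr$) is correct and matches the paper, and the preliminary observations (that $3\lambda\notin\Phi_\bbr$ so $\mathrm{ad}(X_\lambda)$ and $\mathrm{ad}(X_{2\lambda})$ commute, that $\mu-2\lambda\notin\Phi_\bbr\cup\{0\}$ follows from the root-string argument, and that $l$ must be even via the integrality of $l_{2\lambda,\mu}$) are also correct and are used in the paper's proof. However, your main step has a genuine gap. You assert that once $X_\lambda,X_{2\lambda},X_\mu$ are all nonzero, the $(\mu+l\lambda)$-weight component of the module generated by $X_\mu$ under the rank-one reductive subalgebra is one-dimensional and spanned simultaneously by $\mathrm{ad}(X_\lambda)^l X_\mu$ and $\mathrm{ad}(X_{2\lambda})^{l/2}X_\mu$, and that the coefficient of the alleged common vector in the hypothesized sum is strictly positive. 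None of this is established, and neither claim is routine. The module generated by a restricted-highest-weight vector under a real rank-one reductive subalgebra need not be irreducible (the compact factor $\mathfrak{m}$ inside $\frakg_0$ can act non-scalarly on $X_\mu$), restricted weight multiplicities can exceed one, and there is no a priori reason that $\mathrm{ad}(X_{2\lambda})X_\mu$ and $\mathrm{ad}(X_\lambda)^2 X_\mu$ are even linearly dependent inside $\frakg_{\mu+2\lambda}$ for independently chosen $X_\lambda$, $X_{2\lambda}$. You yourself flag the positivity step as the one ``expected to require the most care,'' but that is precisely the content of the lemma; without it the argument does not close.

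The paper circumvents these difficulties entirely by a more concrete route: it reduces to $l\in\{2,4\}$ via Bourbaki, and for the base case $l=2$ applies $\mathrm{ad}(\theta X_\mu)$ to the identity, uses that $\mathrm{ad}(\theta X_\mu)$ commutes with both $\mathrm{ad}(X_\lambda)$ and $\mathrm{ad}(X_{2\lambda})$ (since $\lambda-\mu$ and $2\lambda-\mu$ are not roots), and invokes Lemma \ref{lem: 1} to convert $\mathrm{ad}(\theta X_\mu)X_\mu$ into a multiple of $a_\mu$; a short computation, using $\mathrm{ad}(X_\lambda)X_\lambda=0$, then forces $X_{2\lambda}=0$, after which Lemma \ref{lem: =00005BX_lambda ,X_mu=00005D} finishes. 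The $l=4$ case is reduced to $l=2$ by an explicit quadratic factorization of the operator polynomial. This sidesteps any claim about irreducibility or weight-space dimensions. If you want to salvage your approach, you would need to first prove the linear dependence of the two extremal vectors (which, per the paper's proof of the base case, is genuinely false for nonzero $X_{2\lambda}$ unless one of them already vanishes), so you would in effect be forced back to an argument of the paper's type.
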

\begin{proof}
	It is again follow from \cite[Lem. 10.3]{ha} and  $\lambda-\mu\notin\Phi_{\rho}$ that $\left\langle\lambda,\mu\right\rangle\leq 0$.	
	
	Note that if $2\lambda\notin\Phi_{\bbr}$, we can deduce the conclusion
	of the lemma from Lemma \ref{lem: =00005BX_lambda ,X_mu=00005D}.
	Hence we may assume $2\lambda\in\Phi_{\bbr}$. 
	According to (\ref{eq: l integer}), $\left\langle2\lambda,\mu\right\rangle$ is an integer. By the linearity of $\left(\cdot,\cdot\right)$ and (\ref{eq: l_lambda,mu def}) we have  $\left\langle\lambda,\mu\right\rangle=2\left\langle2\lambda,\mu\right\rangle$.
	Thus, $l$ is even. If $l=0$, then (\ref{eq: =00005BX_lambda ,X_mu=00005D general})
	implies $X_{\mu}=0$. Thus, we may assume $l>0$. Then, according
	to \cite[\S VI.1.3]{key-12} $l\in\left\{ 2,4\right\} $.
	
	First we will prove that for any $X_{\lambda}\in\frakg_{\lambda}$,
	$X_{2\lambda}\in\frakg_{2\lambda}$, and $X_{\mu}\in\frakg_{\mu}$,
	\begin{equation}
	\mbox{ad}\left(X_{2\lambda}\right)X_{\mu}+\mbox{ad}\left(X_{\lambda}\right)^{2}X_{\mu}=0\mbox{ implies }X_{\lambda}=X_{2\lambda}=0\mbox{ or }X_{\mu}=0.\label{eq: =00005Blambda,mu=00005D gen first}
	\end{equation}
	Assume by contradiction that there exist $0\neq X_{\lambda}\in\frakg_{\lambda}$,
	and $X_{2\lambda}\in\frakg_{2\lambda}$, $X_{\mu}\in\frakg_{\mu}$
	not both zero, such that $\mbox{ad}\left(X_{2\lambda}\right)X_{\mu}+\mbox{ad}\left(X_{\lambda}\right)^{2}X_{\mu}=0$.
	Then 
	\begin{equation}
	\mbox{ad}\left(\theta X_{\mu}\right)\left(\mbox{ad}\left(X_{2\lambda}\right)X_{\mu}+\mbox{ad}\left(X_{\lambda}\right)^{2}X_{\mu}\right)=0.\label{eq: k=00003D2 case 1}
	\end{equation}
	By the assumption $\lambda-\mu\notin\Phi_{\bbr}$, which implies $2\lambda-\mu\notin\Phi_{\bbr}$.
	Hence $\mbox{ad}\left(\theta X_{\mu}\right)$ commutes with $\mbox{ad}\left(X_{\lambda}\right)$
	and $\mbox{ad}\left(X_{2\lambda}\right)$. Therefore (\ref{eq: k=00003D2 case 1})
	implies 
	\[
	\mbox{ad}\left(X_{2\lambda}\right)\mbox{ad}\left(\theta X_{\mu}\right)X_{\mu}+\mbox{ad}\left(X_{\lambda}\right)^{2}\mbox{ad}\left(\theta X_{\mu}\right)X_{\mu}=0.
	\]
	By Lemma \ref{lem: 1}, the definition of $t_{\mu}$, and the anti-symmetry
	of the Lie brackets we arrive at 
	\begin{eqnarray*}
		0 &=& \mbox{ad}\left(X_{2\lambda}\right)t_{\mu}+\mbox{ad}\left(X_{\lambda}\right)^{2}t_{\mu}\\
		&=& \lambda\left(t_{\mu}\right)\left(2X_{2\lambda}+\mbox{ad}\left(X_{\lambda}\right)X_{\lambda}\right)\\
		&=&2\left(\lambda,\mu\right) X_{2\lambda}.
	\end{eqnarray*}
	Thus $X_{2\lambda}=0$. Now, according to Lemma \ref{lem: =00005BX_lambda ,X_mu=00005D},
	either $X_{\lambda}=0$ or $X_{\mu}=0$, a contradiction. 
	
	Assume $X_{\lambda}\in\frakg_{\lambda}$, $X_{2\lambda}\in\frakg_{2\lambda}$
	and $X_{\mu}\in\frakg_{\mu}$ satisfy (\ref{eq: =00005BX_lambda ,X_mu=00005D general}).
	If $l=2$, then (\ref{eq: =00005BX_lambda ,X_mu=00005D general})
	implies 
	\begin{equation}
	\mbox{ad}\left(X_{2\lambda}\right)X_{\mu}+\frac{1}{2}\mbox{ad}\left(X_{\lambda}\right)^{2}X_{\mu}=0.\label{eq: k=00003D2 case}
	\end{equation}
	By replacing $X_{\lambda}$ with $\frac{1}{\sqrt{2}}X_{\lambda}$,
	the conclusion of the lemma follows from (\ref{eq: =00005Blambda,mu=00005D gen first}). 
	
	If $l=4$, then (\ref{eq: =00005BX_lambda ,X_mu=00005D general})
	implies 
	\begin{equation}
	\frac{1}{2}\cdot\mbox{ad}\left(X_{2\lambda}\right)^{2}X_{\mu}+\frac{1}{2}\cdot\mbox{ad}\left(X_{2\lambda}\right)\mbox{ad}\left(X_{\lambda}\right)^{2}X_{\mu}+\frac{1}{4!}\cdot\mbox{ad}\left(X_{\lambda}\right)^{4}X_{\mu}=0.\label{eq: k=00003D4 case}
	\end{equation}
	One can find $c_{1},c_{2}>0$ such that the LHS of (\ref{eq: k=00003D4 case})
	is equal to 
	\[
	\frac{1}{2}\left(\mbox{ad}\left(X_{2\lambda}\right)+c_{1}\mbox{ad}\left(X_{\lambda}\right)^{2}\right)\left(\mbox{ad}\left(X_{2\lambda}\right)X_{\mu}+c_{2}\mbox{ad}\left(X_{\lambda}\right)^{2}X_{\mu}\right).
	\]
	Thus, by replacing $X_{\lambda}$ with $\sqrt{c_{1}}X_{\lambda}$
	or $\sqrt{c_{2}}X_{\lambda}$, the conclusion of the lemma follows
	from (\ref{eq: =00005Blambda,mu=00005D gen first}). 
\end{proof}

\subsection{\label{sub:weights}The Fundamental Weights}

Recall that $\Delta_{\bbq}$ is a $\bbq$-simple system and  $\chi_{1},\dots,\chi_{r}$ are the corresponding $\bbq$-fundamental weights 
(defined in \S \ref{sec:The-Main-Results}). 

As in the previous section, $\Phi_{\bbr}$ is the set of $\bbr$-roots. 
According to \cite[\S 21.8]{key-1} there exists an $\bbr$-simple
system $\Delta_{\bbr}\subset\Phi_{\bbr}$ such that the order on $\Phi_{\bbr}$ defined using this simple system satisfies 
\begin{equation}
\alpha>\beta\quad\Rightarrow\quad\alpha\mid_{\fraks}\geq\beta\mid_{\fraks}.\label{eq:order relation between roots}
\end{equation}

Denote by $\Phi_{\bbr}^{+}$ the set of positive $\bbr$-roots, i.e. the roots $\lambda\in\Phi_{\bbr}$ such that $\lambda>0$. 

If the only multiples of a $\bbq$-root $\lambda$ in $\Phi_{\bbq}$ are $\pm\lambda$, then
$\lambda$ is called \textbf{reduced}. If all $\lambda\in\Phi_{\bbq}$
are reduced, then $\Phi_{\bbq}$ is called \textbf{reduced}. 

For any $\alpha\in\Delta_{\bbq}$
let 
\begin{equation}
\Phi_{\alpha}=\left\{\beta\in\Phi_{\bbr}\::\: \Phi_\bbq \mbox{ is reduced and }\beta\mid_{\fraks}\geq\alpha,\mbox{ or }\Phi_\bbq \mbox{ is non-reduced and }\beta\mid_{\fraks}\geq 2\alpha\right\}
\label{eq:Phi_chi}
\end{equation}
and 
\begin{equation}
\chi_{\alpha}=\sum_{\beta\in\Phi_{\alpha}}\beta.
\label{eq:chi_alpha}
\end{equation}

\begin{lem}
	\label{lem:fundam_rep}For any $\alpha\in\Delta_{\bbq}$,  $\beta\in\Phi_{\bbr}^{+}$ 
	\[
	\left\langle \chi_{\alpha},\beta\right\rangle 
	\begin{cases}
	\geq 0, & \text{if } \beta\mid_{\fraks}\geq\alpha\\
	=0, & \text{else} 
	\end{cases}
	\]
\end{lem}

\begin{proof}
	Let $\beta\in\Phi_{\bbr}^{+}$ such that $\beta\mid_\fraks\ngeq\alpha$. Then, equation \eqref{eq: Weyl group def} implies that for any $\lambda\in\Phi_{\alpha}$ we have $\omega_{\beta}\left(\lambda\right)\in\Phi_{\alpha}$. 
	Hence, $\omega_{\beta}$ leaves $\Phi_{\alpha}$ invariant. Thus,  \eqref{eq:chi_alpha} implies  $\omega_{\beta}\left(\chi_{\alpha}\right)=\chi_{\alpha}$. If follows from \eqref{eq: Weyl group def} that $\left\langle\chi_{\alpha},\beta\right\rangle =0$. Proving the second part of the lemma.
	
	Let $\beta\mid_\fraks\geq\alpha$. Then for any $\lambda\in\Phi_{\alpha}$ either $\omega_{\beta}\left(\lambda\right)\in\Phi_{\alpha}$ or $\omega_{\beta}\left(\lambda\right)<\lambda$. Therefore
	\begin{equation}
	\omega_{\beta}\left(\chi_{\alpha}\right)\leq\chi_{\alpha}.
	\label{eq:omega_beta leq}
	\end{equation}
	The second part of the lemma now follows from \eqref{eq: Weyl group def}. 
\end{proof}

\begin{rem}
	\label{rem:chi_alpha_i}In a similar way to Remark \ref{rem:chi_tilde}, one can deduce from Lemma \ref{lem:fundam_rep} that for any $1\leq i\leq r$, $\chi_{\alpha_i}$ is a scalar multiplication of $\tilde{\chi_i}$. 
\end{rem}

\begin{lem}
	\label{lem: not both pos}Assume $\mbox{rank}_{\bbq}G=2$ and $\Phi_{\bbq}$
	is irreducible. For any $\omega\in W\left(\Phi_{\bbq}\right)$ there
	exist $a,b$, not both positive, such that $\omega\left(\chi_{1}\right)=a\chi_{1}+b\omega_{\alpha_{1}}\left(\chi_{1}\right)$.
\end{lem}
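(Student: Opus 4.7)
The plan is to exploit the two-dimensionality to reduce everything to a calculation in the basis $\{\chi_1,\chi_2\}$ of $\mathfrak{s}^*$, and then use the irreducibility of $\Phi_{\bbq}$ to rewrite $\chi_2$ in terms of $\chi_1$ and $\omega_{\alpha_1}(\chi_1)$.

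First, I would expand $\omega(\chi_1)=a_1\chi_1+a_2\chi_2$, and show that $a_1,a_2$ cannot both be positive. For this I would test against $\omega(\alpha_2)=k_1\alpha_1+k_2\alpha_2$: since $\omega$ preserves the Killing form and $\langle\chi_1,\omega(\alpha_2)\rangle=\langle\omega(\chi_1),\omega(\alpha_2)\rangle$ whereas $\langle\chi_1,\alpha_2\rangle=0$ by \eqref{eq:fundamental weights}, pairing would give $a_1 m_1 k_1+a_2 m_2 k_2=0$, and since the coefficients of a root in a simple basis all have the same sign, $k_1,k_2$ have the same sign; combined with $m_1,m_2>0$ this forces $a_1$ and $a_2$ to have opposite signs or to vanish.

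Next I would convert the basis $\{\chi_1,\chi_2\}$ into $\{\chi_1,\omega_{\alpha_1}(\chi_1)\}$. Computing $\langle\chi_1+\omega_{\alpha_1}(\chi_1),\alpha_1\rangle$ gives $0$ by direct calculation from \eqref{eq: Weyl group def}, while $\langle\chi_1+\omega_{\alpha_1}(\chi_1),\alpha_2\rangle=-2m_1\langle\alpha_1,\alpha_2\rangle/\langle\alpha_1,\alpha_1\rangle$, which is strictly positive because $\Phi_{\bbq}$ is irreducible and of rank two (so the two simple roots have negative inner product). This produces a relation $\chi_1+\omega_{\alpha_1}(\chi_1)=c\chi_2$ with $c>0$. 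Substituting this into $\omega(\chi_1)=a_1\chi_1+a_2\chi_2$ gives $\omega(\chi_1)=(a_1+a_2/c)\chi_1+(a_2/c)\omega_{\alpha_1}(\chi_1)$.

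The final step is the case analysis. If $a_2\leq 0$, take $a=a_1+a_2/c$ and $b=a_2/c\leq 0$ and we are done. The obstacle is the remaining case in which $a_2>0$, hence $a_1\leq 0$; here I would use the symmetry between $\alpha_1$ and its reflection by swapping the roles of $\chi_1$ and $\omega_{\alpha_1}(\chi_1)$. Concretely, the same argument applied in the basis $\{\omega_{\alpha_1}(\chi_1),\chi_2\}$ yields an expansion $\omega(\chi_1)=b_1\omega_{\alpha_1}(\chi_1)+b_2\chi_2$ with $b_1,b_2$ not both positive, and pairing with $\alpha_1$ as in \eqref{eq:fundamental weights} forces $b_1=-a_1\geq 0$. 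Ruling out $a_1=0$ (which would put $\omega(\chi_1)\in\bbr_+\chi_2$, contradicting that no Weyl translate of $\chi_1$ lies on the ray through $\chi_2$, a standard fact about dominant weights) gives $b_1>0$ hence $b_2\leq 0$. Applying the identity $\chi_2=c^{-1}(\chi_1+\omega_{\alpha_1}(\chi_1))$ once more yields the desired expression with $b=b_1+b_2/c$ and $a=b_2/c\leq 0$. I expect the main obstacle to be this last case: ensuring that the exceptional possibility $\omega(\chi_1)\in\bbr_+\chi_2$ cannot occur, which I would handle by citing the standard fact that a fundamental weight is never a positive multiple of a different fundamental weight under the Weyl action (as, e.g., in \cite[\S A.10]{key-23}).
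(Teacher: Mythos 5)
Your proposal is correct and takes essentially the same approach as the paper's own proof: expand $\omega(\chi_1)=a_1\chi_1+a_2\chi_2$, pair against $\omega(\alpha_2)$ to rule out both $a_i>0$, use irreducibility to establish $\chi_1+\omega_{\alpha_1}(\chi_1)=c\chi_2$ with $c>0$, substitute, and handle the remaining case by the $\Delta\mapsto\omega_{\alpha_1}(\Delta)$ symmetry together with the standard fact that $\omega(\chi_1)\notin\bbr_+\chi_2$. (One small typo: you wrote $\langle\chi_1,\omega(\alpha_2)\rangle=\langle\omega(\chi_1),\omega(\alpha_2)\rangle$; the invariance of the Killing form gives $\langle\chi_1,\alpha_2\rangle=\langle\omega(\chi_1),\omega(\alpha_2)\rangle$, which is clearly what you used.)
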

\begin{proof}
	Let $\omega\in W\left(\Phi_{\bbq}\right)$. 
	Since the statement is satisfied for the identity element in $W\left(\Phi_{\bbq}\right)$ and for $\omega_{\alpha_1}$, we may assume $\omega$ is neither of them. 
	
	We say that a weight is dominant if it is a non-negative linear combination of the $\bbq$-fundamental weights. 
	
	It is known (see \cite[\S13.2]{hum}) that any rational weight is conjugated under the Weyl group to exactly one dominant weight. Since $\chi_1$ is a dominant weight, there exist $a_1,b_1$ not both positive such that 
	\begin{equation}
	\omega\left(\chi_{1}\right)=a_1 \chi_{1} + b_1 \chi_{2}
	\label{eq:two decomp}
	\end{equation}
	
	Note that $\left\{\omega_{\alpha_{1}}\left(\chi_{1}\right), \chi_{2}\right\}$ are the fundamental weights with respect to the simple system $\omega_{\alpha_{1}}\left(\Delta\right)$. Thus, in a similar way to \eqref{eq:two decomp} one can get that there exist $a_2,b_2$ not both positive
	such that 	
	\begin{equation}
	\omega\left(\chi_{1}\right)=a_2 \omega_{\alpha_{1}}\left(\chi_{1}\right) + b_2 \chi_{2}.
	\label{eq:two decomp2}
	\end{equation}
	
	Using \eqref{eq:fundamental weights} it can be checked that for some $c>0$, 
	\begin{equation}
	\chi_2=c\left(\omega_{\alpha_1}\left(\chi_1\right) + \chi_1\right)
	\label{eq:chi2}
	\end{equation}
	
	It follows from \eqref{eq:fundamental weights}, \eqref{eq:two decomp}, and \eqref{eq:two decomp2} 
	\begin{eqnarray*}
		a_1 &=&\langle \omega\left(\chi_{1}\right),\alpha_{1} \rangle\\
		&=&-\langle \omega\left(\chi_{1}\right),-\alpha_{1}\rangle\\
		&=&-\langle \omega\left(\chi_{1}\right),\omega_{\alpha_{1}}\left(\alpha_{1}\right)\rangle\\
		&=&-a_2 
	\end{eqnarray*}	Hence, $b_1, b_2$ are not both positive. Without loss of generality, assume $b_1\leq 0$. Then, by \eqref{eq:two decomp} and \eqref{eq:chi2} we arrive at \[
	\omega\left(\chi_{1}\right)=\left(a_1+c b_1\right)\chi_{1} + c b_1 \omega_{\alpha_1}\left(\chi_1\right),
	\]
	with $cb_1\leq0$.	
\end{proof}

\subsection{\label{sub:representations}Strongly Rational Representations}

Let $\varrho:G\rightarrow\mbox{GL}\left(V\right)$ be an $\bbr$-representation and  $\rho:\frakg\rightarrow\mathfrak{gl}\left(V\right)$ be its derivative. It will be convenient notationally to refer to $\Phi_{\rho}$ as $\Phi_{\varrho}$ and for any $\lambda\in\Phi_{\rho}$ to $V_{\rho,\lambda}$ as $V_{\varrho,\lambda}$ (in the notation of \S \ref{sub:real rep}). Moreover, elements of $\Phi_{\varrho}$ are called \textbf{$\bbr$-weights for $\varrho$}, for any $\lambda\in\Phi_{\varrho}$, $V_{\varrho,\lambda}$ is called \textbf{the $\bbr$-weight vector space for $\lambda$}, and members of $V_{\varrho,\lambda}$
are called \textbf{$\bbr$-weight vectors for $\lambda$}.

An $\bbr$-weight $\chi$ for $\varrho$ is called an \textbf{$\bbr$-highest weight for $\varrho$} if any $\lambda\in\Phi_{\varrho}$ satisfies $\lambda\leq\chi$. 

\begin{defn}
	\label{def:strongly rational}A finite-dimensional $\bbr$-representation
	$\varrho:G\rightarrow\mbox{GL}\left(V\right)$ is called \textbf{strongly
	rational over $\bbr$} if there is an $\bbr$-highest weight for $\varrho$ and the $\bbr$-weight vector space for the $\bbr$-highest weight is of dimension one. It is called \textbf{strongly
	rational over $\bbq$} if it is strongly rational over $\bbr$, defined over $\bbq$, and the $\bbr$-weight vector space for the $\bbr$-highest weight is also defined over $\bbq$.
\end{defn}

For $\alpha\in\Delta_{\bbq}$ let $\fraku_{\alpha}=\bigoplus_{\lambda\in\Phi_{\alpha}}\frakg_{\lambda}$,
$d_{\alpha}=\dim\fraku_{\alpha}$, and $\varrho_{\alpha}:G\rightarrow\mbox{GL}\left(\bigwedge^{d_{\alpha}}\frakg\right)$
be the $d_{\alpha}$-th exterior power of the adjoint representation (see \S \ref{sub:weights} for the definition of $\Phi_{\alpha}$ and   \cite{key-8,key-9,key-10}
for the definition and properties of the exterior power). For any $\alpha\in\Delta_\bbq$, $\varrho_{\alpha}$ is strongly rational over $\bbq$ with $\bbr$-highest
weight $\tilde{\chi}_{\alpha}$ and $V_{\varrho_{\alpha},\tilde{\chi}_{\alpha}}$ is the set of all non-zero vectors of the form 
\begin{equation}
X_{1}\wedge\cdots\wedge X_{d_{\alpha}},\mbox{ where }\forall 1\leq j\leq d_{\alpha}\;\exists\beta\in\Phi_{\alpha}\;\mbox{such that }\; X_{j}\in\frakg_{\beta}\label{eq: V_chi_beta def}
\end{equation}
(see \eqref{eq:chi_alpha} and Remark \ref{rem:chi_alpha_i}). We denote by $\rho_\alpha$ the derivative of $\varrho_{\alpha}$.

\begin{lem}
	\label{lem: w(Delta) reduced}\cite[Prop. 2.62]{key-7} For any reduced
	root $\lambda\in\Phi_{\bbr}$ there exists $\omega\in W\left(\Phi_{\bbr}\right)$
	such that $\lambda\in\omega\left(\Delta_{\bbr}\right)$.
\end{lem}

\begin{lem}
	\label{lem:w(chi)+lambda} Let $\alpha\in\Delta_\bbq$, $\omega\in W\left(\Phi_\bbr\right)$, $\lambda\in\Phi_{\bbr}$, $X_{\lambda}\in\frakg_{\lambda}$, $X_{2\lambda}\in\frakg_{2\lambda}$, and  $v\in V_{\varrho_\alpha,\omega\left(\chi_\alpha\right)}$. Assume  $\omega\left(\chi_\alpha\right)+\lambda\in\Phi_\varrho$. Then  $l=-\left\langle\omega\left(\chi_\alpha\right),\lambda\right\rangle\geq 1$ and  
	\begin{equation}
	\sum_{k\leq\left[\frac{l}{2}\right]}\frac{1}{k!}\cdot\frac{1}{\left(l-2k\right)!}\cdot\rho_{\alpha}\left(X_{2\lambda}\right)^{k}\rho_{\alpha}\left(X_{\lambda}\right)^{l-2k}v=0,
	\label{eq:BC non-zero}
	\end{equation}	
	implies $X_\lambda=X_{2\lambda}=0$ or $v=0$. 
\end{lem}

\begin{proof}
	First note that by Lemma \ref{lem: w(Delta) reduced} we may assume that $\omega$ is the identity.
	
	Since $\chi_\alpha$ is the highest weight, we may deduce that $-\lambda\in\Phi_{\bbr}^+$ and so
	\begin{equation}
	\chi_\alpha-\lambda\notin\Phi_{\varrho}.\label{eq:chi minus}
	\end{equation}
	By \cite[Lem. 10.3]{ha} we have $\left\langle\chi_{\alpha}+\lambda,\lambda\right\rangle\leq 2$, which using \eqref{eq: Weyl group def} and \eqref{eq: l_lambda,mu def} imply $\left\langle\chi_{\alpha},\lambda\right\rangle\leq -1$.
	By Lemma \ref{lem:fundam_rep} we then get 
	\begin{equation}
	-\lambda\mid_{\fraks}\geq 2\alpha.\label{eq:lambda non-redu} 
	\end{equation}
	
	Assume by contradiction that there exist $X_\lambda\in\frakg_{\lambda}$, $X_{2\lambda}\in\frakg_{2\lambda}$, not both
	zero, and a non-zero $v\in V_{\chi_\alpha}$ such that \eqref{eq:BC non-zero} is satisfied. 
	 		
	If $X_{2\lambda}=0$, then by Lemma \ref{lem: =00005BX_lambda ,X_mu=00005D}, \eqref{eq:BC non-zero}, and \eqref{eq:chi minus} we get a contradiction. Thus, we may assume $X_{2\lambda}\neq0$ (and then  $\Phi_{\bbq}$ is non-reduced).
	
	It follows from (\ref{eq: V_chi_beta def}) that for some $Y_{1},\dots Y_{d_\alpha}\in\frakg$
	\[
	v=Y_{1}\wedge\cdots\wedge Y_{d_{\alpha}}
	\]
	Moreover, for any $1\leq i\leq d_{\alpha}$ there exist $\beta_i\in\Phi_{\alpha}$ such that $Y_{i}\in\frakg_{\beta_{i}}$
	(there might be $\beta_{i}=\beta_{j}$ for $i\neq j$). 
	
	For $1\leq i\leq d_{\alpha}$ denote $l_{i}=\left\langle\beta_{i},\lambda\right\rangle$. Then  $l=\sum_{i=1}^{d_{\beta}}l_{i}$ implies
	\begin{equation}
	\begin{array}{l}
	\sum_{k\leq\left[\frac{l}{2}\right]}\frac{1}{k!}\cdot\frac{1}{\left(l-2k\right)!}\cdot\rho_{\alpha}\left(X_{2\lambda}\right)^{k}\rho_{\alpha}\left(X_{\lambda}\right)^{l-2k}v\\
	\quad=\bigwedge_{i=1}^{d_{\alpha}}\sum_{k\leq\left[\frac{l_{i}}{2}\right]}\frac{1}{k!}\cdot\frac{1}{\left(l_{i}-2k\right)!}\cdot\rho_{\alpha}\left(X_{2\lambda}\right)^{k}\rho_{\alpha}\left(X_{\lambda}\right)^{l_{i}-2k}Y_{i}.
	\end{array}\label{eq: W(chi)+lambda_2}
	\end{equation}
	According to Corollary 4 in \cite[\S 7]{key-8}, $v_1,\dots,v_n$ satisfy $v_1 \wedge\cdots\wedge v_n=0$ if and only if they are linearly dependent. Therefore, \eqref{eq:chi minus}, \eqref{eq: W(chi)+lambda_2}, and the linear independence of the weight spaces imply that  there exist $1\leq i\leq d_{\alpha}$ and a non-zero $Y\in\frakg_{\beta_i}$ such that 
	\begin{equation}
	\sum_{k\leq\left[\frac{l_{i}}{2}\right]}\frac{1}{k!}\cdot\frac{1}{\left(l_{i}-2k\right)!}\cdot\rho_\alpha\left(X_{2\lambda}\right)^{k}\rho_\alpha\left(X_{\lambda}\right)^{l_{i}-2k}Y=0.\label{eq: W(mu)+lambda is zero}
	\end{equation}
	
	Since $\Phi_{\bbq}$ is non-reduced, \eqref{eq:Phi_chi} implies that $\beta_{i}\mid_{\fraks}\geq \alpha$. Since $3\alpha\notin\Phi_\bbq$ (a basic property of a root system), using \eqref{eq:lambda non-redu} we may deduce $\beta_i-\lambda\notin\Phi_\bbr$. Thus, \eqref{eq: W(mu)+lambda is zero} is a contradiction to Lemma \ref{lem: =00005BX_lambda ,X_mu=00005D general}. 
\end{proof}

\section{\label{sec:highest weight} Highest Weight Representations}

We preserve the notation of \S \ref{sec: lie algebras}.

Let $\varrho:G\rightarrow\mbox{GL}\left(V\right)$ be an irreducible finite-dimensional $\bbr$-representation of $G$.


There is a direct sum decomposition 
\begin{equation}
V=\bigoplus_{\lambda\in\Phi_{\varrho,\bbr}}V_{\varrho,\lambda}.\label{eq: V decomp}
\end{equation}
For any $\lambda\in\Phi_{\varrho,\bbr}$ let $\varphi_{\lambda}:V\rightarrow V_{\varrho,\lambda}$
be the projection associated with (\ref{eq: V decomp}). 

The goal of this section is
to prove that the norm of the image of a highest weight vector can
be estimated by looking at a small subset of its coefficients.
\begin{thm}
\label{thm:weyl elem norm bound}Let $\varrho:G\rightarrow\mbox{GL}\left(V\right)$
be a strongly rational over $\bbr$ representation with an $\bbr$-highest
weight $\chi$, and $\norm{\cdot}$ be a norm on $V$. Assume either
$\Phi_{\bbr}$ is reduced or $\varrho=\varrho_{\beta}$ for some $\beta\in\Delta_{\bbq}$
(see \S \ref{sub:representations}). Then there exists $c=c\left(\varrho,\norm{\cdot}\right)>0$
such that any $g\in G$ and $v\in V_{\varrho,\chi}$ satisfy 
\[
\norm{\varrho\left(g\right)v}\leq c\cdot\max_{\omega\in W\left(\Phi_{\bbr}\right)}\norm{\varphi_{\omega\left(\chi\right)}\left(\varrho\left(g\right)v\right)}.
\]

\end{thm}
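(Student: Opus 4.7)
The strategy is to use the Iwasawa decomposition to reduce to a statement on the maximal compact subgroup, and then establish a non-vanishing statement for the extremal weight projections via a limiting argument with one-parameter subgroups of $T$.

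First I would write $g = kan$ in the Iwasawa decomposition of $G$, where $k$ lies in a maximal compact subgroup $K$, $a = \exp(H)$ with $H \in \frakt$, and $n \in N = \exp\bigl(\bigoplus_{\lambda \in \Phi_{\bbr}^{+}} \frakg_{\lambda}\bigr)$. Since $v \in V_{\varrho,\chi}$ is a highest weight vector, $\varrho(n) v = v$ and $\varrho(a) v = e^{\chi(H)} v$, so
\[
\varrho(g) v = e^{\chi(H)} \varrho(k) v,\qquad \varphi_{\omega\chi}\bigl(\varrho(g)v\bigr) = e^{\chi(H)} \varphi_{\omega\chi}\bigl(\varrho(k) v\bigr),
\]
and the scalar $e^{\chi(H)}$ cancels on both sides of the target inequality. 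Replacing $\norm{\cdot}$ by an equivalent $K$-invariant norm (at the cost of absorbing a constant into $c$), one has $\norm{\varrho(k)v} = \norm{v}$ for $k \in K$, and the theorem reduces to showing
\[
\norm{v} \le c \cdot \max_{\omega \in W(\Phi_{\bbr})} \norm{\varphi_{\omega\chi}\bigl(\varrho(k) v\bigr)} \quad \text{for every } k \in K.
\]
By continuity of the projections and compactness of $K$, this amounts to proving that for every $k \in K$ there exists some $\omega \in W(\Phi_{\bbr})$ with $\varphi_{\omega\chi}(\varrho(k) v) \ne 0$.

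To establish this non-vanishing, pick a regular $H_0 \in \frakt$ and consider $a_t = \exp(tH_0)$. With the weight decomposition $\varrho(k) v = \sum_\mu w_\mu$, one has $\varrho(a_t)\varrho(k)v = \sum_\mu e^{t\mu(H_0)} w_\mu$. Let $\mu_{\ast}$ be the weight in the support $\{\mu : w_\mu \ne 0\}$ that maximizes $\mu(H_0)$. Rescaling by $e^{-t\mu_{\ast}(H_0)}$, the sequence converges to a nonzero vector in $V_{\varrho,\mu_{\ast}}$, so the projective limit $[\varrho(a_t) \varrho(k) v]$ lies in $[V_{\varrho,\mu_{\ast}}]$. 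But this limit also lies in the closure of the $G$-orbit of $[v]$ in $\mathbb{P}(V)$, which is the projective flag variety $G/P_\chi$ (the unique closed $G$-orbit in $\mathbb{P}(V)$); its $T$-fixed points are exactly the points $[V_{\varrho,\omega\chi}]$ for $\omega \in W(\Phi_{\bbr})/W_\chi$. Hence $\mu_{\ast}$ must be an extremal weight $\omega\chi$, yielding $\varphi_{\omega\chi}(\varrho(k) v) = w_{\mu_{\ast}} \ne 0$.

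The main obstacle is making the preceding orbit-closure step rigorous over $\bbr$ and effective enough to survive passage to varying $k$ and $v$; this is where the hypothesis that $\Phi_{\bbr}$ is reduced or $\varrho = \varrho_\beta$ enters. In the reduced case, Lemma \ref{lem: =00005BX_lambda ,X_mu=00005D} provides a direct substitute for the Borel fixed-point step: if all extremal projections of $\varrho(k) v$ vanished, then iteratively applying operators $\rho(Y_{\alpha_i})^{l_{\alpha_i,\,\cdot}}$ — for negative simple-root vectors along a reduced word for each $\omega \in W(\Phi_{\bbr})$ — would produce a chain of non-vanishing vectors by the contrapositive of that lemma, and would land in the extremal weight spaces, contradicting the assumption. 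In the non-reduced case, one invokes the exterior-power structure of $\varrho_\beta$: the highest weight vector is a decomposable wedge $X_1 \wedge \cdots \wedge X_{d_\beta}$ of root vectors (Lemma \ref{lem: lin dep}), extremal weight vectors are analogous wedges along $\omega\Phi_\beta$, and Lemma \ref{lem: =00005BX_lambda ,X_mu=00005D general} together with Lemma \ref{lem: mu-lambda not a root} supplies the needed non-vanishing on wedges precisely where Lemma \ref{lem: =00005BX_lambda ,X_mu=00005D} alone does not apply.
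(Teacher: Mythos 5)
Your approach is genuinely different from the paper's. You use the Iwasawa decomposition $g = kan$ to reduce the estimate to $k \in K$ (where the scalar $e^{\chi(H)}$ coming from $a$ cancels and $n$ fixes the highest weight vector), and then you establish the key non-vanishing of extremal projections via the orbit-closure argument: $G[v] = K[v]$ is compact, hence closed, so the limit of $[\varrho(a_t)\varrho(k)v]$ is a $T$-fixed point of that closed orbit, and the Bruhat decomposition forces such a point to lie in an extremal weight line $[V_{\varrho,\omega\chi}]$. Once that non-vanishing is in hand, compactness of $K$ immediately produces the constant $c$. The paper instead works with the Bruhat decomposition from the outset, reduces to a unipotent $n$ fixing the lowest weight projection, and gets the constant from a homogeneity argument (Lemma \ref{lem: hom upper and lower }) together with the non-vanishing Proposition \ref{prop:f(x)=00003D0 =00003D> x=00003D0}, whose proof is where the $\mathfrak{sl}_2$-computations (Lemmas \ref{lem: =00005BX_lambda ,X_mu=00005D}, \ref{lem: =00005BX_lambda ,X_mu=00005D general}, \ref{lem:w(chi)+lambda}, \ref{lem: xi+kalpha}) live. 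Your route is cleaner and more conceptual; the paper's is more self-contained and produces, along the way, Proposition \ref{prop:f(x)=00003D0 =00003D> x=00003D0}, which has independent content.

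Your first two paragraphs are correct as a proof plan; the place where you go astray is the final paragraph. The orbit-closure step is already rigorous over $\bbr$: $K[v]$ is compact hence closed in $\mathbb{P}(V)$, and the claim that $T$-fixed points of $K[v]$ are extremal weight lines follows directly from the real Bruhat decomposition (write $k = n\bar\omega b$, note $\varrho(b)v \in V_{\varrho,\chi}$ and $\varrho(n)$ sends a $\mu$-weight vector to itself plus strictly higher terms, so being a weight vector forces $\varrho(k)v \in V_{\varrho,\omega\chi}$) --- this verification does not depend on $\Phi_{\bbr}$ being reduced or on $\varrho$ being an exterior power. Effectiveness is supplied by compactness of $K$, not by the reduced/$\varrho_\beta$ hypothesis. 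In fact, your argument shows the theorem holds without that hypothesis at all; the hypothesis is an artifact of the paper's proof strategy, because the paper's non-vanishing statement about $\varrho(n)v'$ (Proposition \ref{prop:f(x)=00003D0 =00003D> x=00003D0}) is proved by $\mathfrak{sl}_2$-string computations that need special handling when $2\lambda$ is a root.

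The "ladder" substitution you sketch in the final paragraph is also not well-formed: $\varrho(k)v$ for $k \in K$ is not a weight vector, so iteratively applying $\rho(Y_{\alpha_i})^{l_{\alpha_i,\cdot}}$ does not produce the chain of weight vectors your description presupposes. The lemmas you cite are built to handle $\varrho(n)v'$ for $n$ unipotent and $v'$ an extremal weight vector (that is the paper's Bruhat setting), not the situation after an Iwasawa reduction. You should discard the final paragraph entirely; your paragraphs one and two already constitute a complete and correct proof.
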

For the rest of this section assume $\varrho$ is strongly rational
over $\bbr$ and either $\Phi_{\bbr}$ is reduced or $\varrho=\varrho_{\beta}$
for some $\beta\in\Delta_{\bbr}$. 

Denote by $\chi$ the highest $\bbr$-weight
for $\varrho$. 
Let $\rho:\frakg\rightarrow\mathfrak{gl}\left(V\right)$ be the derivative
of $\varrho$. 

Let $\norm{\cdot}$ be a norm on $V$. 

\begin{prop}
\label{prop: N decomp}\cite[\S 14.4]{key-1} Assume $\Psi=\left\{ \lambda_{1},\dots,\lambda_{d}\right\} \subset\Phi_{\bbr}^{+}$
is closed under addition, i.e., if $\lambda,\mu\in\Psi$ and $\lambda+\mu\in\Phi_{\bbr}$,
then $\lambda+\mu\in\Psi$. Denote $\mathfrak{n}_{\Psi}=\bigoplus_{i=1}^{d}\frakg_{\lambda_{i}}$,
$N_{\Psi}=\exp\mathfrak{n}_{\Psi}$, and for $1\leq i\leq d$ denote $N_{i}=\exp\frakg_{\lambda_{i}}$ (note that $N_{\Psi}$ does not depend on how the elements of $\Psi$
are ordered). Then 
\[
N_{\Psi}=N_{1}\cdot N_{2}\cdots N_{d}=\left\{ n_{1}\cdot n_{2}\cdots n_{d}\::\: n_{i}\in N_{i}\right\} .
\]
\end{prop}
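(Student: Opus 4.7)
The approach is induction on $d$, preceded by choosing a good ordering. I would first re-index $\lambda_1, \ldots, \lambda_d$ so that their heights $\mathrm{ht}(\lambda_i) = \sum_{\alpha \in \Delta_{\bbr}} m_\alpha(\lambda_i)$ are non-decreasing. Since $N_0$ is defined as the image under $\exp$ of the intrinsic subspace $\mathfrak{n}_0 = \bigoplus \frakg_{\lambda_i}$, the independence of $N_0$ from the ordering of $\Psi$ is automatic once $\mathfrak{n}_0$ is known to be a Lie subalgebra. The bracket relation (\ref{eq: =00005Bg_lambda,g_mu=00005D}) combined with the closure of $\Psi$ under addition shows precisely that $\mathfrak{n}_0$ is closed under brackets; and because heights strictly increase under addition of positive roots, $\mathfrak{n}_0$ is nilpotent, so $\exp\colon \mathfrak{n}_0 \to N_0$ is a bijective polynomial map onto a unipotent subgroup.

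The structural fact I would extract from the height ordering is that $\mathfrak{n}^{(k)} := \bigoplus_{i \geq k} \frakg_{\lambda_i}$ is an ideal of $\mathfrak{n}_0$ for every $k$. Indeed, if $i \geq k$ and $\lambda_i + \lambda_j \in \Phi_{\bbr}$, then $\lambda_i + \lambda_j \in \Psi$ by closure, hence equals some $\lambda_m$ whose height strictly exceeds $\mathrm{ht}(\lambda_i)$, forcing $m > i \geq k$. The same height argument applied to pairs inside $\{\lambda_2,\ldots,\lambda_d\}$ shows that $\Psi' := \{\lambda_2, \ldots, \lambda_d\}$ is itself closed under addition, so the inductive hypothesis applies to the subalgebra $\mathfrak{n}^{(2)}$ and yields $N^{(2)} := \exp(\mathfrak{n}^{(2)}) = N_2 \cdots N_d$; moreover, being the exponential of an ideal, $N^{(2)}$ is a normal subgroup of $N_0$.

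The key step is that $N_0/N^{(2)}$ is abelian and isomorphic to $(\frakg_{\lambda_1}, +)$ via $\exp$. The only brackets among the $\frakg_{\lambda_i}$ that might escape $\mathfrak{n}^{(2)}$ are $[\frakg_{\lambda_1}, \frakg_{\lambda_1}] \subset \frakg_{2\lambda_1}$; but when $2\lambda_1 \in \Phi_{\bbr}$ closure of $\Psi$ forces $2\lambda_1 \in \Psi$ with height greater than $\mathrm{ht}(\lambda_1)$, hence $\frakg_{2\lambda_1} \subset \mathfrak{n}^{(2)}$. With this in hand, for surjectivity of the product map $\phi\colon (X_1, \ldots, X_d) \mapsto \exp(X_1)\cdots \exp(X_d)$, given $g \in N_0$ I pick the unique $X_1 \in \frakg_{\lambda_1}$ whose class in $N_0/N^{(2)}$ equals $g \cdot N^{(2)}$; then $\exp(-X_1)\,g \in N^{(2)}$, and the inductive hypothesis applied to $\Psi'$ supplies the remaining factors $\exp(X_2)\cdots\exp(X_d)$. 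For injectivity, two equal products reduce to the same element of $N_0/N^{(2)}$, which pins down $X_1$; cancellation then leaves an identity in $N^{(2)}$ that the inductive hypothesis handles.

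The step I expect to require the most care is exactly the non-reduced case, in which $\frakg_{\lambda_1}$ need not be a subalgebra. The observation that $\frakg_{2\lambda_1}$ is absorbed into $\mathfrak{n}^{(2)}$ whenever $2\lambda_1$ is a root is what keeps the quotient $N_0/N^{(2)}$ abelian uniformly and lets a single inductive argument cover both the reduced and non-reduced settings.
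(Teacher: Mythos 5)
The paper offers no proof of this proposition---it is quoted from Borel \cite[\S 14.4]{key-1}---so I am comparing your argument against the standard one. Your induction (peel off one root group, show the complementary sum of root spaces is an ideal, pass to the quotient) is the right skeleton, and the individual steps you carry out are correct: $\mathfrak{n}_0$ is a subalgebra by (\ref{eq: =00005Bg_lambda,g_mu=00005D}) and closure of $\Psi$, the height argument does show that $\Psi'=\{\lambda_2,\dots,\lambda_d\}$ is closed under addition and that $\mathfrak{n}^{(2)}$ is an ideal, and your observation that $\frakg_{2\lambda_1}\subset\mathfrak{n}^{(2)}$ correctly disposes of the non-reduced case.

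The genuine gap is that you prove the product decomposition only for a height-non-decreasing enumeration of $\Psi$, after silently re-indexing, whereas the proposition asserts it for the \emph{given, arbitrary} ordering (this is the content of ``directly spanned in any order'' in Borel, and it is the form the paper actually uses: in Lemma \ref{lem: xi+kalpha} the factor for $2\alpha$ is placed \emph{before} the factor for $\alpha$, and in the proof of Theorem \ref{thm:weyl elem norm bound} the order is chosen so that $\{\lambda_1^{\omega},\dots,\lambda_{k(\omega)}^{\omega}\}$ comes first---neither is height-monotone). Your induction cannot be run for an arbitrary order as written, because it requires the peeled-off root $\lambda_1$ to sit at the \emph{front} of the product so that $\exp(-X_1)g$ lands in $N^{(2)}$; if the first root in the given order does not have minimal height, $\{\lambda_2,\dots,\lambda_d\}$ need not be closed under addition and $\bigoplus_{i\geq2}\frakg_{\lambda_i}$ need not be an ideal (e.g.\ $\Psi=\{\alpha,\beta,\alpha+\beta\}$ ordered with $\alpha+\beta$ first). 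The standard fix is dual to your choice: peel off a root $\lambda_{i_0}\in\Psi$ of \emph{maximal} height, for which closure of $\Psi$ forces $\frakg_{\lambda_{i_0}}$ to be central in $\mathfrak{n}_0$; then induct on the quotient $N_0/N_{i_0}$, and the central factor $N_{i_0}$ can be reinserted at whatever position $i_0$ occupies in the prescribed order. That modification recovers the statement in full generality; as it stands, your proof establishes strictly less than what the paper needs.
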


Let $\mathfrak{n}=\bigoplus_{\lambda\in\Phi_{\bbr}^{+}}\mathfrak{\frakg}_{\lambda}$,
$N=\exp\left(\mathfrak{n}\right)$, and $B=N_{G}\left(N\right)$.
Then $B$ is an $\bbr$-Borel subgroup. 

\begin{lem}
\label{lem: xi+kalpha}Let $\omega\in W\left(\Phi_{\bbr}\right)$,
$n\in N$, $\xi=\omega\left(\chi\right)$, $\alpha\in\Delta_{\bbr}$,
and $v\in V_{\varrho,\xi}$ be non-zero. Assume $\varphi_{\omega_{\alpha}\left(\xi\right)}\left(\varrho\left(n\right)v\right)=0$ (see \eqref{eq: Weyl group def} for the definition of $\omega_{\alpha}$).
Then for any $k\in\bbn$ 
\begin{equation}
\varphi_{\xi+k\alpha}\left(\varrho\left(n\right)v\right)=0.\label{eq: xi+kalpha}
\end{equation}
\end{lem}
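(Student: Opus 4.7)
The plan is to expand $\varrho(n) v$ as a power series and read off each projection $\varphi_{\xi + k\alpha}(\varrho(n) v)$. Write $n = \exp Y$ with $Y = \sum_{\lambda \in \Phi_{\bbr}^{+}} Y_{\lambda}$, $Y_{\lambda} \in \frakg_{\lambda}$. The key observation is that since $\alpha$ is simple, in any decomposition $k\alpha = \mu_{1} + \cdots + \mu_{m}$ with $\mu_{i} \in \Phi_{\bbr}^{+}$ each $\mu_{i}$ must be a non-negative integer multiple of $\alpha$; the only such positive roots are $\alpha$ and, if $\Phi_{\bbr}$ is non-reduced, $2\alpha$ (higher multiples are never roots). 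Moreover, $[Y_{\alpha}, Y_{2\alpha}] \in \frakg_{3\alpha} = \{0\}$, so $\rho(Y_{\alpha})$ and $\rho(Y_{2\alpha})$ commute on $V$. A multinomial count then yields
\[
\varphi_{\xi + k\alpha}\bigl(\varrho(n) v\bigr) = \sum_{\substack{l_{1}, l_{2} \geq 0 \\ l_{1} + 2 l_{2} = k}} \frac{1}{l_{1}!\, l_{2}!}\, \rho(Y_{\alpha})^{l_{1}}\, \rho(Y_{2\alpha})^{l_{2}}\, v.
\]

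Next I set $l = l_{\alpha, \xi}$, so $\omega_{\alpha}(\xi) = \xi + l\alpha$, and split into cases. If $l = 0$, the hypothesis reduces to $\varphi_{\xi}(\varrho(n) v) = v = 0$ (since $\varrho(n)$ shifts weights only by positive roots), contradicting $v \neq 0$. If $l < 0$, I use the fact that the extremal weight $\xi = \omega(\chi)$ is a vertex of the weight polytope of $V$ (being in the Weyl orbit of the highest weight), which prevents $\xi$ from lying on the open segment between $\xi - \alpha$ and $\xi + \alpha$; combined with $l < 0$ (so $\omega_{\alpha}(\xi)$ lies strictly below $\xi$ along $\alpha$), this forces the $\alpha$-string through $\xi$ to lie in $\{\xi - p\alpha : p \geq 0\}$. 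Hence neither $\xi + \alpha$ nor $\xi + 2\alpha$ is a weight, so $\rho(Y_{\alpha}) v = 0$ and $\rho(Y_{2\alpha}) v = 0$, and every summand in the displayed formula vanishes for any $k \geq 1$.

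The main case is $l > 0$, where the $\alpha$-string through $\xi$ is $\{\xi, \xi + \alpha, \ldots, \xi + l\alpha\}$. Specializing the displayed formula at $k = l$, the expression for $\varphi_{\omega_{\alpha}(\xi)}(\varrho(n) v)$ coincides precisely with the sum in Lemma \ref{lem:w(chi)+lambda} (with that lemma's $X, Y, \lambda$ taken to be $Y_{\alpha}, Y_{2\alpha}, \alpha$ and its $l = l_{\lambda, \omega(\chi)}$ equal to our $l$); the hypothesis $\omega(\chi) + \lambda \in \Phi_{\varrho, \bbr}$ of that lemma is verified because $\xi + \alpha$ is a weight. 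The assumed vanishing together with $v \neq 0$ then forces $Y_{\alpha} = Y_{2\alpha} = 0$ via the contrapositive of Lemma \ref{lem:w(chi)+lambda}$(ii)$. With $Y_{\alpha} = Y_{2\alpha} = 0$, every term in the displayed formula vanishes for any $k$, yielding $\varphi_{\xi + k\alpha}(\varrho(n) v) = 0$ as desired.

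I expect the main obstacle to be the projection-formula bookkeeping: verifying that only $\alpha$ and $2\alpha$ can appear among the positive roots in sums giving $k\alpha$, and that the commutativity of the two root vectors collapses the BCH expansion of $\varrho(\exp Y)$ into exactly the form of Lemma \ref{lem:w(chi)+lambda}. Once the formula is in hand, the case analysis and the application of that lemma proceed directly.
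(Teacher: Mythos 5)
Your proof is correct and follows essentially the same approach as the paper: you derive the same formula for $\varphi_{\xi+k\alpha}\left(\varrho\left(n\right)v\right)$ (you expand $\varrho\left(\exp Y\right)$ directly using the commutativity of $\rho\left(Y_{\alpha}\right)$ and $\rho\left(Y_{2\alpha}\right)$, whereas the paper uses the ordered product factorization of Proposition \ref{prop: N decomp}, but both boil down to the same observation that only the $\alpha$- and $2\alpha$-components contribute), and you then invoke Lemma \ref{lem:w(chi)+lambda}$\left(ii\right)$ exactly as the paper does. Your trichotomy on the sign of $l_{\alpha,\xi}$ is a slight rearrangement of the paper's case split on whether $\xi+\alpha\in\Phi_{\varrho,\bbr}$; the two are interchangeable via Lemma \ref{lem:w(chi)+lambda}$\left(i\right)$, and the paper's version is a bit cleaner since it needs no separate treatment of $l\leq0$.
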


\begin{proof}
Assume $\Phi_{\bbr}^{+}=\left\{ \lambda_{1},\dots,\lambda_{d}\right\} $.
Since $\Phi_{\bbr}^{+}$ is closed under addition, we may use Proposition
\ref{prop: N decomp} to write 
\[
n=\exp\left(X_{1}\right)\exp\left(X_{2}\right)\cdots\exp\left(X_{d}\right),\quad X_{i}\in\frakg_{\lambda_{i}}.
\]
 By the definition of the exponential map, for any $X\in\frakg$ we
have 
\begin{equation}
\varrho\left(\exp\left(X\right)\right)=\sum_{k=0}^{\infty}\frac{1}{k!}\rho\left(X\right)^{k}.\label{eq: exponent formula}
\end{equation}
 It follows from (\ref{eq: lambda acts on chi}) that for any $k_{1},\dots,k_{d}\in\bbn$,
$\rho\left(X_{1}\right)^{k_{1}}\cdots\rho\left(X_{d}\right)^{k_{d}}v$
is an $\bbr$-weight vector for $\xi+k_{1}\lambda_{1}+\dots+k_{d}\lambda_{d}$.
Hence, for any $\mu\in\Phi_{\varrho}$ 
\begin{equation}
\varphi_{\mu}\left(\varrho\left(n\right)v\right)=\sum_{\begin{array}{c}
\mu=\xi+k_{1}\lambda_{1}+\dots+k_{d}\lambda_{d}\\
k_{1},\dots,k_{d}\in\bbn
\end{array}}\frac{1}{k_{1}!\cdots k_{d}!}\rho\left(X_{1}\right)^{k_{1}}\cdots\rho\left(X_{d}\right)^{k_{d}}v.\label{eq: mu coeff}
\end{equation}
 Assume $\alpha=\lambda_{i}$, $2\alpha=\lambda_{j}$, and without
loss of generality assume $j<i$. Since $\alpha$ is an $\bbr$-simple
root, (\ref{eq: mu coeff}) implies that for any $l\in\bbn$,
\begin{equation}
\varphi_{\xi+l\alpha}\left(\varrho\left(n\right)v\right)=\sum_{k\leq\left[\frac{l}{2}\right]}\frac{1}{k!}\frac{1}{\left(l-2k\right)!}\rho\left(X_{j}\right)^{k}\rho\left(X_{i}\right)^{l-2k}v\label{eq:xi+kalpha exponent}
\end{equation}
(if $2\alpha\notin\Phi_{\bbr}$, then $X_{j}=0$). 

If $\xi+\alpha\notin\Phi_{\varrho}$, then $\rho\left(X_{i}\right)v=0$,
and (\ref{eq: xi+kalpha}) can be deduced from (\ref{eq:xi+kalpha exponent}).
Assume otherwise. 

If $\Phi_\bbq$ is non-reduced, then equation (\ref{eq:xi+kalpha exponent}) and Lemma \ref{lem:w(chi)+lambda}
imply $X_{i}=X_{j}=0$. 

Assume $\Phi_\bbq$ is reduced. Then $X_j=0$. The maximality of $\chi$ the assumption $\xi+\alpha\in\Phi_{\varrho}$ and Lemma \ref{lem: w(Delta) reduced} imply that $\xi-\alpha\notin\Phi_{\varrho}$. Then, according to Lemma \ref{lem: =00005BX_lambda ,X_mu=00005D}, $X_{i}$. 

In both cases $X_{i}=X_{j}=0$, and so (\ref{eq: xi+kalpha}) can be deduce
from (\ref{eq:xi+kalpha exponent}).
\end{proof}

According to \cite[\S 11.19]{key-1} the Weyl group satisfies $W\left(\Phi_{\bbr}\right)=N_{G}\left(T\right)/Z_{G}\left(T\right)$.
For $\omega\in W\left(\Phi_{\bbr}\right)$ let $\bar{\omega}$ be
a representative of $\omega$ in $N_{G}\left(T\right)$. For
any $\lambda\in\Phi_{\varrho}$ and $\omega\in W\left(\Phi_{\bbr}\right)$
we have 
\begin{equation}
\rho\left(\bar{\omega}\right)V_{\varrho,\lambda}=V_{\varrho,\omega\left(\lambda\right)}.\label{eq: weyl act weight}
\end{equation}

\begin{thm}[Bruhat decomposition]
\label{thm: bruhat decomp}\cite[\S IX.1]{key-2} We have 
\[
G=\biguplus_{\omega\in W\left(\Phi_{\bbr}\right)}N\bar{\omega}B,
\]
where $\biguplus$ denotes a disjoint union. 
\end{thm}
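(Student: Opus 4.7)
The plan is to prove the Bruhat decomposition by establishing two separate assertions: the covering $G = \bigcup_{\omega \in W(\Phi_\bbr)} N_\bbr \bar{\omega} B_\bbr$, and the pairwise disjointness of these double cosets. Throughout, I would exploit the fact that $B_\bbr = Z_G(\frakt) \cdot N_\bbr$ (where $Z_G(\frakt)$ is the centralizer of $\frakt$, which contains all of $\frakt$ and acts on $N_\bbr$ by conjugation preserving the root space decomposition), so the statement with $N_\bbr$ on the left is equivalent to the classical form $G = \biguplus_\omega B_\bbr \bar{\omega} B_\bbr$.

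First, for the covering, I would argue by induction on the length $\ell(\omega)$ of $\omega \in W(\Phi_\bbr)$ relative to the simple reflections $\{\omega_\alpha : \alpha \in \Delta_\bbr\}$. The base case is $B_\bbr = N_\bbr \cdot \bar{e} \cdot B_\bbr$. For the inductive step, the essential input is the rank-one calculation: for each $\alpha \in \Delta_\bbr$, the subgroup $P_\alpha = \langle B_\bbr, \exp(\frakg_{-\alpha}) \rangle$ contains the $\mathfrak{sl}_2$-triple associated to $\alpha$ via Lemma \ref{lem: 1}, and a direct computation inside the corresponding $\mathrm{SL}_2(\bbr)$-subgroup shows $\exp(\frakg_{-\alpha}) \subset B_\bbr \cup N_\bbr \bar{\omega}_\alpha B_\bbr$. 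Since $G$ is generated by $B_\bbr$ together with the negative simple root subgroups (which follows from the root space decomposition $\frakg = \frakt \oplus \bigoplus_{\lambda \in \Phi_\bbr} \frakg_\lambda$ and the fact that each negative root is a sum of negative simple roots), any $g \in G$ can be written as a product of such factors, and by repeatedly absorbing each $\exp(\frakg_{-\alpha})$-factor into the appropriate double coset while applying Proposition \ref{prop: N decomp} to normalize the $N_\bbr$-part, one concludes $g \in N_\bbr \bar{\omega} B_\bbr$ for some $\omega$.

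For disjointness, suppose $n_1 \bar{\omega}_1 b_1 = n_2 \bar{\omega}_2 b_2$ with $n_i \in N_\bbr$, $b_i \in B_\bbr$. Setting $h = \bar{\omega}_2^{-1} n_2^{-1} n_1 \bar{\omega}_1 = b_2 b_1^{-1} \in B_\bbr$, I would analyze the adjoint action on $\frakt$. Modulo $\fraku_\bbr := \bigoplus_{\lambda \in \Phi_\bbr^+} \frakg_\lambda$, the element $h$ acts on $\frakt$ as $\omega_2^{-1} \omega_1$; but any element of $B_\bbr$ acts trivially on $\frakt \pmod{\fraku_\bbr}$ since $B_\bbr$ normalizes $\fraku_\bbr$ and $T$ centralizes $\frakt$. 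Hence $\omega_2^{-1}\omega_1$ acts trivially on $\frakt$, and using $W(\Phi_\bbr) = N_G(\frakt)/Z_G(\frakt)$ one deduces $\omega_1 = \omega_2$.

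The main obstacle is the inductive covering step: carefully tracking how a product $n_\alpha \bar{\omega} b$ with $n_\alpha \in \exp(\frakg_{-\alpha})$ is reshuffled into the form $n' \bar{\omega}' b'$ requires the rank-one $\mathrm{SL}_2$-identity together with the commutation relations (\ref{eq: =00005Bg_lambda,g_mu=00005D}) to ensure the accumulated $N_\bbr$-factor lies in $N_\bbr$ and not in some larger unipotent group. The non-reduced case (where $2\lambda$ can also be a root) requires some additional care, handled via Lemma \ref{lem: =00005BX_lambda ,X_mu=00005D general}-type arguments. Given the delicacy of these calculations, for a paper of this scope it is reasonable to cite \cite[\S IX.1]{key-2} rather than reproduce the argument.
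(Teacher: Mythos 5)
The paper offers no proof of this theorem: it is quoted directly from Helgason \cite[\S IX.1]{key-2}, so there is nothing internal to compare against, and your sketch is the standard textbook argument (the Tits-system induction for the covering, plus a torus-action argument for disjointness), ending with the same citation the paper uses. In outline the sketch is sound, but two steps deserve care. First, in the covering step, the subgroup generated by $B_{\bbr}$ and the groups $\exp\left(\frakg_{-\alpha}\right)$, $\alpha\in\Delta_{\bbr}$, has full Lie algebra and therefore contains only the identity component of $G$; since the real points of a real algebraic group need not be connected, one also needs that the minimal parabolic $B_{\bbr}=N_{G}\left(\mathfrak{n}_{\bbr}\right)$ meets every connected component of $G$ — this is part of what the cited reference actually supplies beyond the root-system combinatorics. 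Second, the disjointness step is stated incorrectly as written: putting $h=\bar{\omega}_{2}^{-1}n\bar{\omega}_{1}$ with $n=n_{2}^{-1}n_{1}\in N_{\bbr}$, one gets for $X\in\frakt$ that $\mbox{Ad}\left(h\right)X=\omega_{2}^{-1}\omega_{1}\left(X\right)+\mbox{Ad}\left(\bar{\omega}_{2}^{-1}\right)Y$ with $Y\in\mathfrak{n}_{\bbr}$, and the error term $\mbox{Ad}\left(\bar{\omega}_{2}^{-1}\right)Y$ lies in $\bigoplus_{\lambda\in\Phi_{\bbr}^{+}}\frakg_{\omega_{2}^{-1}\left(\lambda\right)}$, which is in general \emph{not} contained in $\mathfrak{n}_{\bbr}$ (your $\fraku_{\bbr}$); so $h$ does not act as $\omega_{2}^{-1}\omega_{1}$ modulo $\mathfrak{n}_{\bbr}$. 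The repair is immediate: that error term has zero component in the zero weight space $\frakg_{0}$ (the centralizer of $\frakt$ in $\frakg$), while $\mbox{Ad}\left(b_{2}b_{1}^{-1}\right)X$ equals $X$ plus an element of $\mathfrak{n}_{\bbr}$, so comparing $\frakg_{0}$-components of the two expressions still gives $\omega_{2}^{-1}\omega_{1}\left(X\right)=X$ for all $X\in\frakt$, whence $\omega_{1}=\omega_{2}$ by $W\left(\Phi_{\bbr}\right)=N_{G}\left(\frakt\right)/Z_{G}\left(\frakt\right)$. With these two adjustments your argument is correct; deferring to \cite[\S IX.1]{key-2}, as both you and the paper do, remains the reasonable choice.
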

Fix an order $\Phi_{\bbr}^{+}=\left\{ \lambda_{1},\dots,\lambda_{d}\right\} $
so that 
\begin{equation}
\mbox{any }1\leq i<j\leq d\mbox{ satisfies }\lambda_{i}\ngeq\lambda_{j}.\label{eq: property of Phi'}
\end{equation}
For $\omega\in W\left(\Phi_{\bbr}\right)$ let 
\[
\left\{ \lambda\in\Phi_{\bbr}^{+}\::\:\omega\left(\chi\right)+\lambda\in\Phi_{\varrho}\right\} =\left\{ \lambda_{1}^{\omega},\dots,\lambda_{k\left(\omega\right)}^{\omega}\right\} ,
\]
where the indexation is the one induced from the indexation on $\Phi_{\bbr}^{+}$,
and let 
\[
U_{\omega}=\exp\left(\frakg_{\lambda_{1}^{\omega}}\right)\cdot\exp\left(\frakg_{\lambda_{2}^{\omega}}\right)\cdots\exp\left(\frakg_{\lambda_{k\left(\omega\right)}^{\omega}}\right).
\]

\begin{prop}
\label{prop:f(x)=00003D0 =00003D> x=00003D0}Let $\omega\in W\left(\Phi_{\bbr}\right)$,
$n\in U_{\omega}$, $v\in V_{\varrho,\omega\left(\chi\right)}$. If
$n$ is not the identity and $v$ is non-zero, then there exists $\omega^{\prime}\in W\left(\Phi_{\bbr}\right)$
such that $\omega^{\prime}\left(\chi\right)\neq\omega\left(\chi\right)$
and $\varphi_{\omega^{\prime}\left(\chi\right)}\left(\varrho\left(n\right)v\right)\neq0$.\end{prop}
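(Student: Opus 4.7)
The plan is to exhibit an explicit $\omega'$ keyed to the ``first nontrivial factor'' in the product decomposition of $n$. Write $n=\exp(X_1^\omega)\cdots\exp(X_{k(\omega)}^\omega)$ with $X_j^\omega\in\frakg_{\lambda_j^\omega}$, and let $i_0$ be the smallest index with $X_{i_0}^\omega\neq 0$; such $i_0$ exists because $n\neq 1$. Set $\lambda:=\lambda_{i_0}^\omega$ and $l:=l_{\lambda,\omega(\chi)}$, a positive integer by Lemma \ref{lem:w(chi)+lambda}$(i)$. I take $\omega':=\omega_\lambda\omega$, so $\omega'(\chi)=\omega(\chi)+l\lambda$, which differs from $\omega(\chi)$. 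The goal is then to show $\varphi_{\omega'(\chi)}(\varrho(n)v)\neq 0$.

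To compute this projection I expand each exponential via (\ref{eq: exponent formula}) and use (\ref{eq: lambda acts on chi}) to see that the projection to weight $\omega'(\chi)$ collects exactly the tuples $(k_j)_{j\ge i_0}$ of nonnegative integers satisfying $\sum_j k_j\,\lambda_j^\omega=l\lambda$. The central step is to show that this sum collapses to the expression featuring in Lemma \ref{lem:w(chi)+lambda}$(ii)$: namely, only tuples with $k_j=0$ for every $j>i_0$ with $\lambda_j^\omega\notin\{\lambda,2\lambda\}$ contribute, leaving the combination
\[
\sum_{k\le[l/2]}\tfrac{1}{k!\,(l-2k)!}\,\rho(X_{j_0}^\omega)^k\,\rho(X_{i_0}^\omega)^{l-2k}\,v,
\]
where $j_0$ indexes $2\lambda$ in the non-reduced case (with $X_{j_0}^\omega=0$ or absent in the reduced case). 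The reduction rests on the ordering (\ref{eq: property of Phi'}): for $j>i_0$ we have $\lambda_{i_0}^\omega\ngeq\lambda_j^\omega$, so there exists $\beta\in\Delta_\bbr$ with $m_\beta(\lambda_j^\omega)>m_\beta(\lambda)$, and testing the equation $\sum_j k_j\,m_\beta(\lambda_j^\omega)=l\,m_\beta(\lambda)$ at such $\beta$ against the nonnegativity of all $k_j$ should force $k_j=0$.

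Once this combinatorial reduction is in place, the conclusion is immediate: the sum matches Lemma \ref{lem:w(chi)+lambda}$(ii)$ with $X=X_{i_0}^\omega$ and $Y=X_{j_0}^\omega$ (taking $Y=0$ when $\Phi_\bbr$ is reduced). Since $X_{i_0}^\omega\neq 0$ and $v\neq 0$, that lemma yields $\varphi_{\omega'(\chi)}(\varrho(n)v)\neq 0$, as required.

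I expect the combinatorial reduction to be the main obstacle. When $\lambda_j^\omega$ has support strictly larger than $\lambda$, a single simple-root test kills $k_j$; the delicate situation is when $\lambda_j^\omega$ shares the support of $\lambda$ but has a strictly larger coefficient on some simple root, in which case the single equation only forces $k_j<l$. To close the gap one must combine the equations across several simple roots and exploit the structural hypothesis that either $\Phi_\bbr$ is reduced or $\varrho=\varrho_\beta$ (see \S \ref{sub:The_Non-Reduced_Case }), together with the defining constraint $\omega(\chi)+\lambda_j^\omega\in\Phi_{\varrho,\bbr}$ on the $U_\omega$-index set. These restrictions should suffice to rule out any competing tuple and leave only the $\{\lambda,2\lambda\}$-supported contribution.
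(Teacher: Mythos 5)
The approach you take — directly identifying the first nontrivial factor $\exp(X_{i_0}^\omega)$, setting $\lambda=\lambda_{i_0}^\omega$, and projecting to the weight $\omega_\lambda\omega(\chi)=\omega(\chi)+l\lambda$ — is genuinely different from the paper's proof, which proceeds by contradiction and uses a change of simple system (Proposition \ref{thm: w(Delta) reduced}) plus the Bruhat decomposition (Theorem \ref{thm: bruhat decomp}) so that only \emph{simple} roots ever appear in the relevant exponential bookkeeping. That difference is precisely where your argument has a gap that cannot be repaired as stated.

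Your ``combinatorial reduction'' claims that the only tuples $(k_j)_{j\ge i_0}$ with $\sum_j k_j\lambda_j^\omega = l\lambda$ (and $k_j\ge 0$) are those supported on $\{\lambda,2\lambda\}$, so that the $\omega(\chi)+l\lambda$ projection of $\varrho(n)v$ collapses to the exact expression in Lemma \ref{lem:w(chi)+lambda}$(ii)$. You flag that this is delicate when $\lambda_j^\omega$ shares the support of $\lambda$, and indeed the claim is false in that case. Take $\Phi_\bbr$ of type $G_2$ with short simple root $\alpha$ and long simple root $\beta$, and take $\lambda=2\alpha+\beta$. The roots coming \emph{after} the index of $\lambda$ in any ordering satisfying (\ref{eq: property of Phi'}) include $3\alpha+\beta$ and $3\alpha+2\beta$, neither of which is $\le\lambda$. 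With $l=3$ (a legitimate Cartan integer $l_{\lambda,\omega(\chi)}$ in $G_2$) one has $3\lambda=6\alpha+3\beta$ and the competing decomposition $(3\alpha+\beta)+(3\alpha+2\beta)=6\alpha+3\beta$, so the tuple $(k_{i_0},k_{j_1},k_{j_2})=(0,1,1)$ contributes alongside $(3,0,0)$. Since $l\lambda$ admits several decompositions into positive roots indexed by $U_\omega$, the $\omega(\chi)+l\lambda$ projection is a sum of several terms, and your argument neither identifies them all nor shows the total is nonzero. Testing a single simple-root coordinate cannot kill these extra terms because the ``bad'' simple root for one $\lambda_j^\omega$ need not be bad for another, and because $m_\beta(\lambda)$ itself may be positive.

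The paper sidesteps this entirely: by replacing $\Delta_\bbr$ with $s_1(\Delta_\bbr)$, the chosen $\lambda$ becomes simple, and when $\lambda$ is simple the decomposition of $l\lambda$ into positive roots is trivially supported on $\{\lambda,2\lambda\}$ (any other positive root has support outside $\{\lambda\}$). Then Lemma \ref{lem: xi+kalpha} applies cleanly. The contradiction assumption is what allows the paper to run this for \emph{every} reduced positive root and conclude $\varphi_{\xi+\lambda}(\varrho(n)v)=0$ for all $\lambda\in\Phi_\bbr^+$; the final induction on the first nonzero factor then uses only the projection $\varphi_{\xi+\lambda_i^\omega}$ (the case $l=1$), where no combinatorial ambiguity can arise. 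If you want to salvage a direct argument, you would need to replicate the paper's change of simple system before projecting — at which point the proofs essentially coincide.
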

\begin{proof}
Arguing by contradiction, assume there exist $\omega\in W\left(\Phi_{\bbr}\right)$,
$1\neq n\in U_{\omega}$, and $0\neq v\in V_{\varrho,\omega\left(\chi\right)}$,
such that any $\omega^{\prime}\in W\left(\Phi_{\bbr}\right)$, $\omega^{\prime}\left(\chi\right)\neq\omega\left(\chi\right)$,
satisfies $\varphi_{\omega^{\prime}\left(\chi\right)}\left(\varrho\left(n\right)v\right)=0$.

Denote $\xi=\omega\left(\chi\right)$. Let $\lambda\in\Phi_{\bbr}^{+}$
be a reduced $\bbr$-root which satisfies $\xi+\lambda\in\Phi_{\varrho,\bbr}$.
According to Lemma \ref{lem: w(Delta) reduced} there exists
$s_{1}\in W\left(\Phi_{\bbr}\right)$ so that $\lambda\in s_{1}\left(\Delta_{\bbr}\right)$.
Then $s_{1}\left(\chi\right)$ is the highest weight according to
the order defined by $s_{1}\left(\Delta_{\bbr}\right)$. Let $v_{1}=\varrho\left(\bar{s}_{1}\bar{\omega}^{-1}\right)v\in V_{\varrho,s_{1}\left(\chi\right)}$
and $g=n\bar{\omega}\bar{s}_{1}^{-1}$, then 
\begin{equation}
\varrho\left(n\right)v=\varrho\left(g\right)v_{1}.\label{eq:n->g}
\end{equation}
Replace $\Delta_{\bbr}$ with $s_{1}\left(\Delta_{\bbr}\right)$.
By Theorem \ref{thm: bruhat decomp}, $g=n_{2}\bar{s}_{2}b$ where
$n_{2}\in N$, $s_{2}\in W\left(\Phi_{\bbr}\right)$, $b\in B$
(note that here $\Phi_{\bbr}^{+}$ is defined using $s_{1}\left(\Delta_{\bbr}\right)$).
Since $s_{1}\left(\chi\right)$ is the highest weight, according to
the new order, $\varrho\left(b\right)v_{1}\in V_{\varrho,s_{1}\left(\chi\right)}$.
Then, there exists a non-zero vector $v_{2}\in V_{\varrho,s_{2}s_{1}\left(\chi\right)}$
such that 
\begin{equation}
\varrho\left(g\right)v_{1}=\varrho\left(n_{2}\right)v_{2}.\label{eq: g->n2}
\end{equation}
It follows from (\ref{eq: exponent formula}) and $n_{2}\in N$
that $\varphi_{s_{2}s_{1}\left(\chi\right)}\left(\varrho\left(n_{2}\right)v_{2}\right)=v_{2}$.
Thus, by (\ref{eq:n->g}) and (\ref{eq: g->n2}), $\varphi_{s_{2}s_{1}\left(\chi\right)}\left(\varrho\left(n\right)v\right)=v_{2}$.
Since $\varphi_{s\left(\chi\right)}\left(\varrho\left(n\right)v\right)=0$
if $s\left(\chi\right)\neq\xi$, and $v_{2}$ is non-zero, we can
deduce $s_{2}s_{1}\left(\chi\right)=\xi$. Since $\xi+\lambda\in\Phi_{\varrho,\bbr}$,
by Lemma \ref{lem:w(chi)+lambda}$\left(i\right)$ and the assumption
we have $\varphi_{\omega_{\lambda}\left(\xi\right)}\left(\varrho\left(n_{2}\right)v_{2}\right)=0$.
Then it follows from Lemma \ref{lem: xi+kalpha} and equations (\ref{eq:n->g}),
(\ref{eq: g->n2}) that $\varphi_{\xi+\lambda}\left(\varrho\left(n\right)v\right)=\varphi_{\xi+2\lambda}\left(\varrho\left(n\right)v\right)=0$.
Hence, any positive real root $\lambda$ satisfies 
\begin{equation}
\varphi_{\xi+\lambda}\left(\varrho\left(n\right)v\right)=0.\label{eq: xi+lambda =00003D0}
\end{equation}

By the definition of $U_{\omega}$ 
\[
n=\exp\left(X_{1}\right)\cdots\exp\left(X_{k\left(\omega\right)}\right),\quad X_{i}\in\frakg_{\lambda_{i}^{\omega}}.
\]
We will now show by induction on $i$ that $X_{i}=0$. Let $1\leq i\leq k\left(\omega\right)$
and assume $X_{1}=\dots=X_{i-1}=0$. As in the proof of Lemma \ref{lem: xi+kalpha},
it follows from (\ref{eq: exponent formula}), (\ref{eq: property of Phi'}),
and the induction assumption that 
\[
\varphi_{\xi+\lambda_{i}^{\omega}}\left(\varrho\left(n\right)v\right)=\rho\left(X_{i}\right)v.
\]
Since $v\neq0$, by (\ref{eq: xi+lambda =00003D0}) and Lemma \ref{lem: =00005BX_lambda ,X_mu=00005D}
it follows that $X_{i}=0$. Thus $n$ is the identity element, a contradiction. 
\end{proof}
Let $k_{1},k_{2}\in\bbn$, $l=\left(l_{1},\dots,l_{k_{1}}\right)\in\bbr_{+}^{k_{1}}$.
We say $f:\bbr^{k_{1}}\rightarrow\mathbb{\bbr}^{k_{2}}$ is \textbf{$l$-homogeneous}
if for any $x_{1},\dots,x_{k_{1}}\in\bbr$, $a\geq0$,
\[
f\left(a^{l_{1}}x_{1},\dots,a^{l_{k_{1}}}x_{k_{1}}\right)=af\left(x_{1},\dots,x_{k_{1}}\right).
\]
 Let $\norm{\cdot}_{l}$ be the following quasi-norm on $\bbr^{k_{1}}$
\[
\norm x_{l}=\max_{1\leq i\leq k_{1}}\left|x_{i}\right|^{\nicefrac{1}{l_{i}}},\quad x=\left(x_{1},\dots,x_{k_{1}}\right)\in\bbr^{k_{1}}.
\]

\begin{lem}
\label{lem: hom upper and lower }Let $k_{1},k_{2}\in\bbn$, $l\in\bbr_{+}^{k_{1}}$,
and assume a norm on $\bbr^{k_{2}}$. Let $f:\bbr^{k_{1}}\rightarrow\bbr^{k_{2}}$
be a continuous $l$-homogeneous function. Then there exists $c_{1}>0$
such that for any $x\in\bbr^{k_{1}}$, 
\[
c_{1}\cdot\norm{f\left(x\right)}\leq\norm x_{l}.
\]
If $f\left(x\right)=0$, $x\in\bbr^{k_{1}}$, only when $x=0$, then
there exists $c_{2}>0$ such that for any $x\in\bbr^{k_{1}}$, 
\[
\norm x_{l}\leq c_{2}\cdot\norm{f\left(x\right)}.
\]

\end{lem}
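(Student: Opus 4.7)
The plan is to reduce both inequalities to a compactness argument on the ``unit quasi-sphere'' $S = \{x \in \bbr^{k_{1}} : \norm{x}_{l} = 1\}$. First I would observe that for any $a \geq 0$, the scaling map $\phi_{a}(x_{1},\dots,x_{k_{1}}) = (a^{l_{1}}x_{1},\dots,a^{l_{k_{1}}}x_{k_{1}})$ satisfies $\norm{\phi_{a}(x)}_{l} = a\norm{x}_{l}$ and, by $l$-homogeneity, $f(\phi_{a}(x)) = a f(x)$. Taking $a = 0$ in the definition of homogeneity gives $f(0) = 0$, so the two inequalities are automatic at $x = 0$.

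Next I would check that $S$ is compact: the function $x \mapsto \norm{x}_{l} = \max_{i} |x_{i}|^{1/l_{i}}$ is continuous on $\bbr^{k_{1}}$ (as a max of continuous functions), so $S$ is closed; moreover, $\norm{x}_{l} = 1$ forces $|x_{i}| \leq 1$ for every $i$, so $S$ is bounded. Every nonzero $x \in \bbr^{k_{1}}$ admits a unique representation $x = \phi_{\norm{x}_{l}}(y)$ with $y \in S$ (take $y_{i} = x_{i} / \norm{x}_{l}^{l_{i}}$). Under this representation, $\norm{f(x)} = \norm{x}_{l}\,\norm{f(y)}$.

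For the first inequality, $\norm{f(\cdot)}$ is continuous on the compact set $S$ and hence attains a maximum $M \geq 0$. If $M = 0$ then $f$ vanishes identically on $S$ and by homogeneity $f \equiv 0$, so any $c_{1} > 0$ works; otherwise the scaling identity above yields $\norm{f(x)} \leq M\,\norm{x}_{l}$, so $c_{1} = 1/M$ suffices. For the second inequality, the extra hypothesis that $f$ vanishes only at $0$ ensures $\norm{f(\cdot)}$ is strictly positive on $S$; by continuity and compactness it attains a positive minimum $m > 0$, and the same scaling identity yields $\norm{x}_{l} \leq (1/m)\,\norm{f(x)}$, so $c_{2} = 1/m$ suffices.

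There is no real obstacle here beyond carefully verifying compactness of $S$ and the bijection between $\bbr^{k_{1}} \setminus \{0\}$ and $\bbr_{+} \times S$ via the anisotropic scaling $\phi_{a}$; once those are in place, the extreme value theorem delivers both constants immediately.
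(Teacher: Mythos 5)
Your proposal is correct and takes essentially the same approach as the paper's (commented-out) proof: rescale by the anisotropic map $\phi_{\norm{x}_l}$ onto the quasi-sphere $S=\{\norm{\cdot}_l=1\}$, use compactness and the extreme value theorem to get the constants, and transport back via the scaling identity $\norm{f(x)}=\norm{x}_l\,\norm{f(y)}$. You are somewhat more explicit than the paper about the compactness of $S$ and the degenerate cases ($x=0$, $f\equiv 0$), but the underlying argument is identical.
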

The proof of Lemma \ref{lem: hom upper and lower } is left as an
exercise for the reader.

\begin{lem}
	\label{lem: w(Delta)}\cite[Thm. 2.63]{key-7} The $\bbr$-Weyl group
	acts simply transitively on $\bbr$-simple systems. That is, if $\Delta$
	and $\Delta^{\prime}$ are two $\bbr$-simple systems for $\Phi_{\bbr}$,
	then there exists one and only one element $\omega\in W\left(\Phi_{\bbr}\right)$
	such that $\omega\left(\Delta\right)=\Delta^{\prime}$.
\end{lem}

\begin{prop}
\label{prop: weyl bound implies bound}Let $\omega\in W\left(\Phi_{\bbr}\right)$.
For any $c_{1}\geq1$ there exists $c_{2}\geq1$ such that if $n\in U_{\omega}$,
$0\neq v\in V_{\varrho,\omega\left(\chi\right)}$ satisfy 
\begin{equation}
\max_{s\in W\left(\Phi_{\bbr}\right)}\norm{\varphi_{s\left(\chi\right)}\left(\varrho\left(n\right)v\right)}\leq c_{1}\cdot\norm v,\label{eq: n bound}
\end{equation}
then $\norm{\varrho\left(n\right)v}\leq c_{2}\cdot\norm v$. \end{prop}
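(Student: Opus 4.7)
The plan is a weighted-homogeneity argument that uses Proposition \ref{prop:f(x)=00003D0 =00003D> x=00003D0} as the essential non-triviality input. Since $\varrho$ is strongly rational over $\bbr$, $V_{\varrho,\chi}$ is one-dimensional, and (\ref{eq: weyl act weight}) makes every $V_{\varrho,s(\chi)}$ for $s\in W(\Phi_{\bbr})$ one-dimensional as well; by linearity of $\varrho(n)$ it suffices to prove the bound assuming $v$ has unit norm. Parametrize $n\in U_{\omega}$ by $n=n(X):=\exp(X_{1})\cdots\exp(X_{k(\omega)})$ with $X=(X_{1},\dots,X_{k(\omega)})\in\fraku:=\bigoplus_{i}\frakg_{\lambda_{i}^{\omega}}$, which is injective as an ordered product of exponentials of distinct positive root spaces, and define
\[
P_{\mu}(X):=\varphi_{\mu}\!\left(\varrho(n(X))v\right),\qquad\mu\in\Phi_{\varrho,\bbr}.
\]
By (\ref{eq: mu coeff}), each $P_{\mu}$ is a polynomial in $X$ involving only monomials $\rho(X_{1})^{k_{1}}\cdots\rho(X_{k(\omega)})^{k_{k(\omega)}}v$ with $\sum_{i}k_{i}\lambda_{i}^{\omega}=\mu-\omega(\chi)$; in particular $P_{\omega(\chi)}\equiv v$ and all other $P_{\mu}$ vanish at $X=0$.

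Next I would install the weighted homogeneity. Choose $H\in\frakt$ with $\lambda_{i}^{\omega}(H)>0$ for every $i$ (possible since every $\lambda_{i}^{\omega}\in\Phi_{\bbr}^{+}$), and set $l_{i}:=\lambda_{i}^{\omega}(H)$ and $d_{\mu}:=(\mu-\omega(\chi))(H)$. Reading off the scaling of $n(X)$ under $\mathrm{Ad}(\exp(tH))$ gives
\[
P_{\mu}\!\left(e^{tl_{1}}X_{1},\dots,e^{tl_{k(\omega)}}X_{k(\omega)}\right)=e^{td_{\mu}}P_{\mu}(X),
\]
and $d_{\mu}>0$ for every $\mu\neq\omega(\chi)$ with $P_{\mu}\not\equiv 0$, since every such $\mu$ equals $\omega(\chi)+\sum k_{i}\lambda_{i}^{\omega}$ with $k_{i}\geq 0$ not all zero. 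Define the quasi-norm $\|X\|_{l}:=\max_{i}\|X_{i}\|^{1/l_{i}}$ on $\fraku$ and the auxiliary function
\[
N(X):=\max_{s\in W(\Phi_{\bbr}),\,s(\chi)\neq\omega(\chi)}\|P_{s(\chi)}(X)\|^{1/d_{s(\chi)}},
\]
each $1$-homogeneous under $X_{i}\mapsto e^{tl_{i}}X_{i}$. The crucial observation is that $N(X)=0$ forces $X=0$: vanishing of $N$ says $\varphi_{s(\chi)}(\varrho(n(X))v)=0$ for every $s$ with $s(\chi)\neq\omega(\chi)$, so Proposition \ref{prop:f(x)=00003D0 =00003D> x=00003D0} yields $n(X)=1$, and injectivity of the parametrization then gives $X=0$.

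The conclusion now proceeds exactly in the spirit of Lemma \ref{lem: hom upper and lower }. Continuity of $N$ on the compact quasi-sphere $\{X:\|X\|_{l}=1\}$ together with the previous step gives a positive lower bound $N(X)\geq c_{0}$ there, and $1$-homogeneity upgrades this to $N(X)\geq c_{0}\|X\|_{l}$ for every $X\in\fraku$. Under the hypothesis (and $c_{1}\geq 1$) we get $N(X)\leq c_{1}^{1/d_{\min}}$, where $d_{\min}$ is the smallest positive $d_{s(\chi)}$, so $\|X\|_{l}$ is bounded by a constant depending only on $c_{1}$ and $\varrho$. An analogous compactness argument applied to each of the finitely many $P_{\mu}$ (extracting the upper bound on the unit quasi-sphere) yields $\|P_{\mu}(X)\|\leq C_{\mu}\|X\|_{l}^{d_{\mu}}$, and summing over $\mu\in\Phi_{\varrho,\bbr}$ via the decomposition (\ref{eq: V decomp}) together with equivalence of norms on the finite-dimensional space $V$ produces $\|\varrho(n)v\|\leq c_{2}\|v\|$ with $c_{2}$ depending only on $c_{1}$ and $\varrho$. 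The sole substantive ingredient is the implication $N(X)=0\Rightarrow X=0$, which reduces cleanly to Proposition \ref{prop:f(x)=00003D0 =00003D> x=00003D0}; everything else is bookkeeping of a weighted compactness argument.
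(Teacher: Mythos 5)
Your proposal is correct and follows essentially the same strategy as the paper's proof: parametrize $n\in U_{\omega}$ by coordinates on the root spaces, extract a weighted homogeneity of the projection functions $P_{\mu}$, invoke Proposition \ref{prop:f(x)=00003D0 =00003D> x=00003D0} as the nondegeneracy input so that the restricted-max function vanishes only at the origin, and conclude via compactness on the quasi-sphere in the style of Lemma \ref{lem: hom upper and lower }. The only difference is cosmetic: you induce the weights via an auxiliary $H\in\frakt$ with $\lambda_i^{\omega}(H)>0$, whereas the paper takes $l_i=\sum_{\alpha\in\Delta_{\bbr}}m_{\alpha}(\lambda_i)$ (i.e.\ fixes $H$ with $\alpha(H)=1$ for every simple $\alpha$); both choices give the needed $1$-homogeneity. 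One tiny point to tidy: in your definition of $N(X)$, for those $s$ with $s(\chi)\neq\omega(\chi)$ but $P_{s(\chi)}\equiv 0$, the exponent $1/d_{s(\chi)}$ may not be positive, so those terms should be dropped from the max (they contribute zero anyway) — this is exactly the role of the case $l_{\mu}=1$ for $\mu\not\geq\omega(\chi)$ in the paper's definition of $l_{\mu}$.
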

\begin{proof}
Let $\left\{ E_{1},\dots,E_{k}\right\} $ be a basis for $\mathfrak{n}_{\bbr}$
which satisfies the following. For any $1\leq i\leq d$ there exists
$1\leq j\left(i\right)\leq k$, $j\left(0\right)=1$, such that $\left\{ E_{j\left(i-1\right)},\dots,E_{j\left(i\right)}\right\} $
is a basis for $\frakg_{\lambda_{i}}$. Let $e\in V_{\varrho,\omega\left(\chi\right)}$
be of norm one. By the definition of strongly rational over $\bbr$
representations and (\ref{eq: weyl act weight}), the vector $e$
spans $V_{\varrho,\omega\left(\chi\right)}$. 

For $1\leq i\leq d$ denote $l_{i}=\sum_{\alpha\in\Delta_{\bbr}}m_{\alpha}\left(\lambda_{i}\right)$,
and for $\mu\in\Phi_{\varrho}$ denote 
\[
l_{\mu}=\begin{cases}
1 & \mbox{if }\mu\leq\omega\left(\chi\right)\\
\sum_{\alpha\in\Delta_{\bbr}}m_{\alpha}\left(\mu-\omega\left(\chi\right)\right) & \mbox{otherwise}
\end{cases}
\]
where for an $\bbr$-weight $\lambda$, $m_{\alpha}\left(\lambda\right)$
is defined as in (\ref{eq: lambda=00003Dsum n(lambda) mu}). For $\mu\in\Phi_{\varrho}$
define $f_{\mu}:\bbr^{k}\rightarrow\bbr$ by 
\begin{eqnarray*}
f_{\mu}\left(x_{1},\dots,x_{k}\right) & = & \biggl\Vert\varphi_{\mu}\Bigl(\varrho\Bigl(\exp\left(x_{1}E_{1}+\dots+x_{j\left(1\right)}E_{j\left(1\right)}\right)\circ\\
 &  & \cdots\circ\exp\left(x_{j\left(d-1\right)}E_{j\left(d-1\right)}+\dots+x_{k}E_{k}\right)\Bigl)e\Bigl)\biggl\Vert^{\nicefrac{1}{l_{\mu}}}.
\end{eqnarray*}
If $\mu\geq\omega\left(\chi\right)$, it follows from (\ref{eq: exponent formula})
that 
\begin{eqnarray*}
f_{\mu}\left(x_{1},\dots,x_{k}\right) & = & \Biggl\Vert\sum\frac{1}{b_{1}!\cdots b_{m}!}\rho\left(x_{j\left(i_{1}-1\right)}E_{j\left(i_{1}-1\right)}+\dots+x_{j\left(i_{1}\right)}E_{j\left(i_{1}\right)}\right)^{b_{1}}\circ\\
 &  & \cdots\circ\rho\left(x_{j\left(i_{k}-1\right)}E_{j\left(i_{k}-1\right)}+\dots+x_{j\left(i_{k}\right)}E_{j\left(i_{m}\right)}\right)^{b_{m}}e\Biggl\Vert^{\nicefrac{1}{l_{\mu}}},
\end{eqnarray*}
where the sum is over all $i_{1}<\cdots<i_{m}$ such that $b_{1}\lambda_{i_{1}}+\dots+b_{m}\lambda_{i_{m}}=\mu-\omega\left(\chi\right)$.
Since $\mu<\omega\left(\chi\right)$ implies $f_{\mu}\left(x\right)=0$,
we may deduce each $f_{\mu}$ is a continuous $l=\left(l_{1},\dots,l_{k}\right)$-homogeneous
function for any $\mu\in\Phi_{\varrho,\bbr}$. 

Let
\[
\left\{ \mu_{1},\dots,\mu_{k_{1}}\right\} =\left\{ s\left(\chi\right)\::\: s\in W\left(\Phi_{\bbr}\right),\: s\left(\chi\right)\neq\omega\left(\chi\right)\right\} ,
\]
$f:\bbr^{k}\rightarrow\bbr^{k_{1}}$ be defined by $f=\left(f_{\mu_{1}},\dots,f_{\mu_{k_{1}}}\right)$%
. Then $f$ is a continuous $l$-homogeneous function, and, by Proposition
\ref{prop:f(x)=00003D0 =00003D> x=00003D0}, $f\left(x\right)=0$
implies $x=0$. Hence, according to Lemma \ref{lem: hom upper and lower },
there exists $c_{2}^{\prime}>0$ such that for any $x\in\bbr^{k}$
\begin{equation}
\norm x_{l}\leq c_{2}^{\prime}\cdot\norm{f\left(x\right)}.\label{eq: x<f(x)}
\end{equation}

There exists $a=\left(a_{1},\dots,a_{k}\right)\in\bbr^{k}$ such that
\[
n=\exp\left(a_{1}E_{1}+\dots+a_{j\left(1\right)}E_{j\left(1\right)}\right)\cdots\exp\left(a_{j\left(d-1\right)}E_{j\left(d-1\right)}+\dots+a_{k}E_{k}\right).
\]
Since $e$ spans $V_{\varrho,\omega\left(\chi\right)}$ and is of
norm one, and by the linearity of the representation, for any $\mu\in\Phi_{\varrho}$
\begin{equation}
\norm{\varphi_{\mu}\left(\varrho\left(n\right)v\right)}=\norm v\cdot\norm{f_{\mu}\left(a\right)}^{l_{\mu}}.\label{eq: phi vs f}
\end{equation}
Then by (\ref{eq: x<f(x)}) and the assumption 
\begin{equation}
\norm a_{l}\leq c_{2}^{\prime}\cdot\norm{f\left(a\right)}\leq c_{2}^{\prime}\cdot\frac{1}{\norm v}\cdot c_{1}\norm v=c_{2}^{\prime}\cdot c_{1}.\label{eq:a bound}
\end{equation}

Assume $\Phi_{\varrho}=\left\{ \mu_{1},\dots,\mu_{k_{2}}\right\} $.
We may apply Lemma \ref{lem: hom upper and lower } to $f^{\prime}=\left(f_{\mu_{1}},\dots,f_{\mu_{k_{2}}}\right)$
which is also a continuous $l$-homogeneous function, in order to
deduce there exists $c_{1}^{\prime}>0$ such that for any $\mu\in\Phi_{\varrho}$,
$x\in\bbr^{k}$, 
\begin{equation}
\norm{f_{\mu}\left(x\right)}\leq c_{1}^{\prime}\cdot\norm x_{l}.\label{eq:f bound}
\end{equation}
Without loss of generality assume that for any $u\in V$
\begin{equation}
\norm u=\max_{\mu\in\Phi_{\varrho,\bbr}}\norm{\varphi_{\mu}\left(u\right)}.\label{eq: norm bound}
\end{equation}

Denote $c_{2}=\max_{\mu\in\Phi_{\varrho}}\left(c_{1}^{\prime}\cdot c_{2}^{\prime}\cdot c_{1}\right)^{l_{\mu}}$.
Then, by (\ref{eq: phi vs f}), (\ref{eq:a bound}), (\ref{eq:f bound}),
and (\ref{eq: norm bound}) 
\begin{eqnarray*}
\norm{\varrho\left(n\right)v} & = & \max_{\mu\in\Phi_{\varrho,\bbr}}\norm{\varphi_{\mu}\left(\varrho\left(n\right)v\right)}\\
 & = & \norm v\cdot\max_{\mu\in\Phi_{\varrho}}\norm{f_{\mu}\left(a\right)}^{l_{\mu}}\\
 & \leq & c_{2}\cdot\norm v.
\end{eqnarray*}

\end{proof}

\begin{proof}[Proof of Theorem \ref{thm:weyl elem norm bound}]
Denote by $c$ the constant $c_{2}$ which satisfies the conclusion
of Proposition \ref{prop: weyl bound implies bound} for $c_{1}=1$. 

Let $s\in W\left(\Phi_{\bbr}\right)$ satisfy 
\[
\norm{\varphi_{s\left(\chi\right)}\left(\varrho\left(g\right)v\right)}=\max_{\omega\in W\left(\Phi_{\bbr}\right)}\norm{\varphi_{\omega\left(\chi\right)}\left(\varrho\left(g\right)v\right)}.
\]
By Theorem \ref{lem: w(Delta)} there exists $s_{1}\in W\left(\Phi_{\bbr}\right)$
such that $s_{1}\left(\Delta_{\bbr}\right)=-s\left(\Delta_{\bbr}\right)$.
According to the order defined using $s_{1}\left(\Delta_{\bbr}\right)$,
the $\bbr$-highest weight for $\varrho$ is $s_{1}\left(\chi\right)$.
Moreover, $s\left(\chi\right)$ is the $\bbr$-lowest weight for $\varrho$
according to this order, i.e. every $\bbr$-weight of $\varrho$ is
of the form $s\left(\chi\right)+\sum_{\alpha\in\Delta_{\bbr}}m_{\alpha}\alpha,$
with non-negative integers $m_{\alpha}$. Let $v^{\prime}=\varrho\left(\bar{s}_{1}\right)v$.
Then $v^{\prime}\in V_{\varrho,s_{1}\left(\chi\right)}$ and 
\[
\varrho\left(g\right)v=\varrho\left(g\bar{s}_{1}^{-1}\right)v^{\prime}.
\]
 Hence, by replacing $\Delta_{\bbr}$ with $s_{1}\left(\Delta_{\bbr}\right)$,
$\chi$ with $s_{1}\left(\chi\right)$, $v$ with $v^{\prime}$, and
$g$ with $g\bar{s}_{1}^{-1}$, we may assume 
\begin{equation}
\norm{\varphi_{\xi}\left(\varrho\left(g\right)v\right)}=\max_{\omega\in W\left(\Phi_{\bbr}\right)}\norm{\varphi_{\omega\left(\chi\right)}\left(\varrho\left(g\right)v\right)},\label{eq: xi is highest}
\end{equation}
where $\xi$ is the $\bbr$-lowest weight for $\varrho$. 

According to Theorem \ref{thm: bruhat decomp}, $g=n\bar{\omega}b$
where $n\in N$, $\omega\in W\left(\Phi_{\bbr}\right)$, $b\in B$.
Since $\chi$ is the highest weight, there exists $v^{\prime}\in V_{\varrho,\omega\left(\chi\right)}$
so that 
\begin{equation}
\varrho\left(g\right)v=\varrho\left(n\right)v^{\prime}.\label{eq: g equal n}
\end{equation}
Since $\xi$ is the lowest weight, $n\in N$, and $\varphi_{\xi}\left(\varrho\left(g\right)v\right)\neq0$,
by (\ref{eq: exponent formula}) we may deduce $\omega\left(\chi\right)=\xi$. 

Order $\Phi_{\bbr}^{+}=\left\{ \mu_{1},\dots,\mu_{d}\right\} $ so
that 
\[
\left\{ \mu_{1},\dots,\mu_{k\left(\omega\right)}\right\} =\left\{ \lambda_{1}^{\omega},\dots,\lambda_{k\left(\omega\right)}^{\omega}\right\} .
\]
Using Proposition \ref{prop: N decomp} with $\Psi=\Phi_{\bbr}^{+}$
we may write 
\[
n=\exp\left(X_{1}\right)\cdots\exp\left(X_{d}\right),\quad X_{i}\in\frakg_{\mu_{i}}.
\]
For $k\left(\omega\right)+1\leq i\leq d$, we have $\xi+\lambda_{i}\notin\Phi_{\varrho}$.
Then by (\ref{eq: exponent formula}) we have $\exp\left(X_{i}\right)v=v$.
Let $n^{\prime}=\exp\left(X_{1}\right)\cdots\exp\left(X_{k\left(\omega\right)}\right)$,
then 
\begin{equation}
\varrho\left(n\right)v^{\prime}=\varrho\left(n^{\prime}\right)v^{\prime}.\label{eq: n equal n'}
\end{equation}
By (\ref{eq: xi is highest}), (\ref{eq: g equal n}), and (\ref{eq: n equal n'})
we have 
\[
\norm{\varphi_{\xi}\left(\varrho\left(n^{\prime}\right)v^{\prime}\right)}=\max_{\omega\in W\left(\Phi_{\bbr}\right)}\norm{\varphi_{\omega\left(\chi\right)}\left(\varrho\left(n^{\prime}\right)v^{\prime}\right)}.
\]
Since $\varphi_{\xi}\left(\varrho\left(n^{\prime}\right)v^{\prime}\right)=v^{\prime}$,
Proposition \ref{prop: weyl bound implies bound} implies 
\begin{equation}
\norm{\varrho\left(n^{\prime}\right)v^{\prime}}\leq c\cdot\norm{\varphi_{\xi}\left(\varrho\left(n^{\prime}\right)v^{\prime}\right)}.\label{eq: almos finish}
\end{equation}
Now, (\ref{eq: g equal n}), (\ref{eq: n equal n'}), and (\ref{eq: almos finish})
imply the conclusion of the theorem. 
\end{proof}
The following example shows that the reals in Theorem \ref{thm:weyl elem norm bound}
cannot be replaced with the rationals, and hence that the assumption
on the real rank of $G$ in the proof of Theorem \ref{thm: all obvoius}
is critical. 
\begin{example}
Let 
\[
\underline{1}=\begin{pmatrix}1 & 0\\
0 & 1
\end{pmatrix},\quad i=\begin{pmatrix}0 & -1\\
1 & 0
\end{pmatrix},\quad j=\begin{pmatrix}\sqrt{2} & 0\\
0 & -\sqrt{2}
\end{pmatrix},\quad k=\begin{pmatrix}0 & \sqrt{2}\\
\sqrt{2} & 0
\end{pmatrix}.
\]
One can check by direct computation that $D=\bbq\cdot1+\bbq\cdot i+\bbq\cdot j+\bbq\cdot k$
is a central division algebra over $\bbq$%
. Since $D\otimes_{\bbq}\bbr\cong M\left(2,\bbr\right)$, the matrix
algebra $M\left(3,D\right)$ is naturally imbedded in $M\left(6,\bbr\right)$.
Denote by $G$ the real algebraic group defined over $\bbq$ by 
\[
G\left(\bbq\right)=\mbox{SL}_{3}\left(D\right)=\left\{ g\in M\left(3,D\right)\::\:\det\left(g\right)=1\right\} .
\]
 Then $G=\mbox{SL}_{6}\left(\bbr\right)$ and 
\[
\frakg_{\bbq}=\left\{ X\in M\left(3,D\right)\::\:\mbox{trace}\left(X\right)=0\right\} .
\]
Then 
\[
S=\left\{ \mbox{diag}\left(s_{1},s_{1},s_{2},s_{2},s_{3},s_{3}\right)\::\: s_{1},s_{2},s_{3}\in\bbr,\; s_{1}\cdot s_{2}\cdot s_{3}=1\right\} 
\]
is a maximal $\bbq$-split torus in $G$. Let $\varrho:G\rightarrow\mbox{GL}\left(\bigwedge^{8}\frakg\right)$
be the eighth exterior power of the adjoint representation. Then $\varrho$
is a strongly rational over $\bbq$-representation. Denote $\underline{0}=\begin{pmatrix}0 & 0\\
0 & 0
\end{pmatrix}$. There exists an order on the rational simple system so that $12s_{1}$
is the $\bbq$-highest weight and 
\begin{eqnarray*}
v & = & \begin{pmatrix}\underline{0} & \underline{0} & \underline{1}\\
\underline{0} & \underline{0} & \underline{0}\\
\underline{0} & \underline{0} & \underline{0}
\end{pmatrix}\wedge\begin{pmatrix}\underline{0} & \underline{0} & i\\
\underline{0} & \underline{0} & \underline{0}\\
\underline{0} & \underline{0} & \underline{0}
\end{pmatrix}\wedge\begin{pmatrix}\underline{0} & \underline{0} & j\\
\underline{0} & \underline{0} & \underline{0}\\
\underline{0} & \underline{0} & \underline{0}
\end{pmatrix}\wedge\begin{pmatrix}\underline{0} & \underline{0} & k\\
\underline{0} & \underline{0} & \underline{0}\\
\underline{0} & \underline{0} & \underline{0}
\end{pmatrix}\\
 &  & \wedge\begin{pmatrix}\underline{0} & \underline{1} & \underline{0}\\
\underline{0} & \underline{0} & \underline{0}\\
\underline{0} & \underline{0} & \underline{0}
\end{pmatrix}\wedge\begin{pmatrix}\underline{0} & i & \underline{0}\\
\underline{0} & \underline{0} & \underline{0}\\
\underline{0} & \underline{0} & \underline{0}
\end{pmatrix}\wedge\begin{pmatrix}\underline{0} & j & \underline{0}\\
\underline{0} & \underline{0} & \underline{0}\\
\underline{0} & \underline{0} & \underline{0}
\end{pmatrix}\wedge\begin{pmatrix}\underline{0} & k & \underline{0}\\
\underline{0} & \underline{0} & \underline{0}\\
\underline{0} & \underline{0} & \underline{0}
\end{pmatrix},
\end{eqnarray*}
is a $\bbq$-highest weight vector. Let 
\[
g=\begin{pmatrix}1 & 0 & 0 & 0 & 0 & 0\\
0 & 0 & 1 & 0 & 0 & 0\\
0 & 1 & 0 & 0 & 0 & 0\\
0 & 0 & 0 & 1 & 0 & 0\\
0 & 0 & 0 & 0 & 1 & 0\\
0 & 0 & 0 & 0 & 0 & 1
\end{pmatrix}.
\]
Then it can be directly checked that $\varrho\left(g\right)v$ is
a $\bbq$-weight vector for $-6s_{3}$. Since 
\[
\left\{ \omega\left(12s_{1}\right)\::\:\omega\in W\left(\Phi_{\bbq}\right)\right\} =\left\{ 12s_{1},\;12s_{2},\;12s_{3}\right\} ,
\]
we get that 
\[
\max_{\omega\in W\left(\Phi_{\bbq}\right)}\norm{\varphi_{\omega\left(\chi\right)}\left(\varrho\left(g\right)v\right)}=0
\]
even though $\varrho\left(g\right)v$ is a non-zero vector. 
\end{example}

\section{\textup{\label{sec:Compactness-criterion}}Compactness Criterion}

Recall that $\Delta_{\bbq}=\left\{\alpha_{1},\dots,\alpha_{r}\right\}$ is a $\bbq$-simple system for the rational root system $\Phi_{\bbq}$. 
 
Let $\calb$ be a rational basis for $\frakg_{\bbz}$ such that $\calb_{\beta}\subset\frakg_{\beta}$ for any $\beta\in\Phi_{\bbq}\cup\left\{0\right\}$ and $\calb=\biguplus_{\beta\in\Phi_{\bbq}\cup\left\{0\right\}}\calb_{\beta}$. Denote the $\bbz$-span of $\calb$ by $\frakg_{\bbz}$. 

For $1\leq i\leq r$ let 
\[
\fraku_{i}=\bigoplus_{\beta\in\Phi_{\bbq},\:\beta\geq\alpha_{i}}\frakg_{\beta},\quad P_{i}=N_{G}\left(\fraku_{i}\right),
\]
and $\calb_{i}=\biguplus_{\beta\geq\alpha_{i}}\calb_{\beta}$. 
Then, $P_{1},\dots,P_{r}$ are the maximal $\bbq$-parabolic
subgroups of $G$ containing $B$ (see \S \ref{sec:highest weight} for the definition of $B$), $\fraku_{1},\dots,\fraku_{r}$ are the Lie algebras of their unipotent radicals, and $\calb_{1},\dots,\calb_{r}$ are bases for $\fraku_{1},\dots,\fraku_{r}$.

\begin{defn}\label{def:active}
	For a neighborhood $W$ of zero in $\frakg$, $g\in G$, and $1\leq i\leq r$, we say that $q\in G\left(\bbq\right)$ is \textbf{$\left(W,i\right)$-active} for $g$ if\[
	\mbox{Ad}\left(gq\right)\calb_i\subset\left(\mbox{Ad}(g)\frakg_{\bbz}\cap W\right).
	\] 
\end{defn}

The following useful criterion is similar to the one proved in \cite[Prop. 3.5]{key-3} and can be deduced from its proof.
\begin{prop}[Compactness criterion]
	\label{Compactness criterion}
	There exists a finite subset $C_0\subset G\left(\bbq\right)$ which satisfies the following.
	A subset $A\subset G/\Gamma$ is unbounded if and only if for any neighborhood $W$ of zero in $\frakg$ there is $g\in G$, $\pi\left(g\right)\in A$, $1\leq i\leq r$, and $q\in\Gamma C_0$ which is $\left(W,i\right)$-active for $g$.
\end{prop}
\begin{rem}\label{rem:compact}
Since $C_0$ is a finite subset of $G\left(\bbq\right)$, by changing $\calb$ we can assume $C_0$ only contains the identity. 
\end{rem}

Let $\fraku_{0}=\bigoplus_{\beta\in\Phi_\bbq,\:\beta>0}\frakg_{\beta}$. 
A subset of $\frakg$ is called \textbf{horospherical} if it is contained in a $\bbq$-subalgebra conjugate to $\mathfrak{n}_0$.

\begin{prop}
	\label{prop:W unipotent}\cite[Prop. 3.3]{key-3}
	There exists a neighborhood $W_0$ of zero in $\frakg$ such that for any $g\in G$, the span of $\mbox{Ad}\left(g\right)\frakg_\bbz\cap W_0$ is horospherical.
\end{prop}

\begin{prop}
\label{prop: conj rad equal}\cite[Prop. 3.5]{key-18} Let $1\leq i\leq r$.
If $\mathfrak{v}\subset\frakg$ is conjugate to $\fraku_{i}$ and $\mathfrak{v}\subset\mathfrak{n}_{0}$,
then $\mathfrak{v}=\fraku_{i}$. 
\end{prop}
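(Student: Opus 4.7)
The plan is to combine a Bruhat decomposition with the observation that $\fraku_{i}$ is an ideal of $\mathfrak{n}_{\bbq}$. Fix $g\in G$ with $\mbox{Ad}(g)\fraku_{i}=\fraku$, and write $g=n\bar{\omega}b$, where $n\in\exp(\mathfrak{n}_{\bbq})$, $\omega\in W(\Phi_{\bbq})$ with representative $\bar{\omega}\in N_{G}(\mathfrak{s})$, and $b\in B_{\bbq}$ (this is the $\bbq$-analog of Theorem~\ref{thm: bruhat decomp}, which applies to the reductive $\bbq$-group $G$). Since $B_{\bbq}\subset P_{i,\bbq}$ normalizes $\fraku_{i}$ by Lemma~\ref{lem: chi_i stable under P_i }, $\fraku=\mbox{Ad}(n\bar{\omega})\fraku_{i}$, and the problem reduces to showing $\mbox{Ad}(n\bar{\omega})\fraku_{i}=\fraku_{i}$.

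Set $\Psi_{i}=\{\lambda\in\Phi_{\bbq}^{+}:\lambda\geq\alpha_{i}\}$, so $\fraku_{i}=\bigoplus_{\lambda\in\Psi_{i}}\frakg_{\lambda}$. The substantive step is to show $\omega(\Psi_{i})\subset\Phi_{\bbq}^{+}$. One has $\mbox{Ad}(\bar{\omega})\fraku_{i}=\bigoplus_{\alpha\in\Psi_{i}}\frakg_{\omega(\alpha)}$, and writing $n=\exp(X)$ with $X\in\mathfrak{n}_{\bbq}$, formulae (\ref{eq: exponent formula}) and (\ref{eq: =00005Bg_lambda,g_mu=00005D}) imply that $\mbox{Ad}(n)$ maps a root vector $Y\in\frakg_{\mu}$ to $Y$ plus terms in $\frakg_{\mu+\gamma}$ with $\gamma\in\Phi_{\bbq}^{+}$, exactly as in the expansion used in the proof of Lemma~\ref{lem: xi+kalpha}. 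Consequently, for any nonzero $X_{\alpha}\in\frakg_{\alpha}$ with $\alpha\in\Psi_{i}$, the projection of $\mbox{Ad}(n\bar{\omega})X_{\alpha}$ onto $\frakg_{\omega(\alpha)}$ equals the nonzero vector $\mbox{Ad}(\bar{\omega})X_{\alpha}$. Since $\mbox{Ad}(n\bar{\omega})X_{\alpha}\in\fraku\subset\mathfrak{n}_{\bbq}$ and $\mathfrak{n}_{\bbq}$ is a direct sum of positive root spaces, this forces $\omega(\alpha)\in\Phi_{\bbq}^{+}$.

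Next I would deduce $\omega\in W_{i,\bbq}$ from $\omega(\Psi_{i})\subset\Phi_{\bbq}^{+}$. The reflections $\omega_{\alpha_{j}}$ with $j\neq i$ preserve the coefficient $m_{\alpha_{i}}(\cdot)$ of the expansion (\ref{eq: lambda=00003Dsum n(lambda) mu}) and permute $\Phi_{\bbq}^{+}\setminus\{\alpha_{j}\}$ within $\Phi_{\bbq}^{+}$, so $W_{i,\bbq}$ stabilizes $\Psi_{i}\subset\Phi_{\bbq}^{+}$; this is one direction. For the converse, invoke the standard Coxeter-theoretic fact that an element of a Weyl group lies in the parabolic subgroup generated by a subset $J$ of simple reflections if and only if it sends every positive root outside the subsystem spanned by $J$ to a positive root. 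Taking $J=\Delta_{\bbq}\setminus\{\alpha_{i}\}$, so that $\Phi_{\bbq}^{+}\setminus\Phi_{J}^{+}=\Psi_{i}$, yields $\omega\in W_{i,\bbq}$ and hence $\mbox{Ad}(\bar{\omega})\fraku_{i}=\fraku_{i}$.

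To finish, $\fraku_{i}$ is an ideal of $\mathfrak{n}_{\bbq}$: for $\beta\in\Phi_{\bbq}^{+}$ and $\gamma\in\Psi_{i}$, (\ref{eq: =00005Bg_lambda,g_mu=00005D}) gives $[\frakg_{\beta},\frakg_{\gamma}]\subset\frakg_{\beta+\gamma}$, and if $\beta+\gamma$ is a root then $m_{\alpha_{i}}(\beta+\gamma)=m_{\alpha_{i}}(\beta)+m_{\alpha_{i}}(\gamma)\geq 1$, placing it in $\Psi_{i}$. Therefore $\exp(\mathfrak{n}_{\bbq})$ normalizes $\fraku_{i}$, and $\fraku=\mbox{Ad}(n)\fraku_{i}=\fraku_{i}$. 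I expect the main obstacle to be the Weyl-group implication $\omega(\Psi_{i})\subset\Phi_{\bbq}^{+}\Rightarrow\omega\in W_{i,\bbq}$, particularly in non-reduced $\bbq$-root systems where $2\alpha_{i}\in\Phi_{\bbq}$ complicates the combinatorics; a clean argument either invokes the classification of parabolic Coxeter subgroups or reduces to the reduced subsystem along the lines of Proposition~\ref{thm: w(Delta) reduced}.
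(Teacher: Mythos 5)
The paper does not prove this proposition itself but cites \cite[Prop.~3.5]{key-18}, so there is no in-paper argument to compare against; I judge your proposal on its own. The strategy is a sound one, but the first step as written is incorrect. The decomposition $g=n\bar{\omega}b$ with $n\in\exp(\mathfrak{n}_{\bbq})$, $\omega\in W(\Phi_{\bbq})$, $b\in B_{\bbq}$ is the relative Bruhat decomposition of $G(\bbq)$; it does \emph{not} decompose the real group $G$ when $\mbox{rank}_{\bbq}G<\mbox{rank}_{\bbr}G$, because $B_{\bbq}$ is then not a minimal parabolic of $G$ over $\bbr$. Concretely, in the paper's example $G=\mbox{SL}_{6}\left(\bbr\right)$ with the $\mbox{SL}_{3}\left(D\right)$ rational structure, the number of double cosets $B_{\bbq}\backslash G/B_{\bbq}$ equals $\left|S_{2}^{3}\backslash S_{6}/S_{2}^{3}\right|=21$, while $\left|W(\Phi_{\bbq})\right|=6$, so $G\neq\bigcup_{\omega\in W\left(\Phi_{\bbq}\right)}N_{\bbq}\bar{\omega}B_{\bbq}$. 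The hypothesis only gives $\fraku=\mbox{Ad}\left(g\right)\fraku_{i}$ for some $g\in G$, so you cannot assume $g\in G\left(\bbq\right)$, and your opening line fails.

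The rest of your reasoning survives if you instead run it with the genuine $\bbr$-Bruhat decomposition of Theorem~\ref{thm: bruhat decomp}. Write $g=n\bar{\omega}b$ with $n\in N_{\bbr}$, $\omega\in W\left(\Phi_{\bbr}\right)$, $b\in B_{\bbr}$; since $B_{\bbr}\subset B_{\bbq}\subset P_{i,\bbq}=N_{G}\left(\fraku_{i}\right)$ (this containment is the compatibility of \cite[\S 21.8]{key-1}), absorbing $b$ again reduces matters to showing $\mbox{Ad}\left(n\bar{\omega}\right)\fraku_{i}=\fraku_{i}$. Put $J=\left\{ \alpha\in\Delta_{\bbr}:m_{\alpha_{i}}\left(\alpha\mid_{\fraks}\right)=0\right\} $; then $\fraku_{i}=\bigoplus_{\lambda\in\Phi_{\bbr}^{+}\setminus\Phi_{J}}\frakg_{\lambda}$, and your projection computation, applied to $\bbr$-root vectors $X_{\lambda}$ with $\lambda\in\Phi_{\bbr}^{+}\setminus\Phi_{J}$, forces $\omega\left(\lambda\right)\mid_{\fraks}>0$, whence by (\ref{eq:order relation between roots}) $\omega\left(\lambda\right)\in\Phi_{\bbr}^{+}$. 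The Coxeter fact you quote ($\omega\in W_{J}$ iff $\omega\left(\Phi^{+}\setminus\Phi_{J}\right)\subset\Phi^{+}$) is correct, by induction on $\ell\left(\omega\right)$ using that any simple $\alpha$ with $\omega\left(\alpha\right)<0$ must lie in $J$, and non-reducedness is harmless since $\omega\left(\lambda\right)$ and $\omega\left(2\lambda\right)$ share a sign. Thus $\omega\in W_{J}$, which stabilizes $\Phi_{\bbr}^{+}\setminus\Phi_{J}$; and $\fraku_{i}$ is an ideal of $\mathfrak{n}_{\bbr}$ by the same $m_{\alpha_{i}}$-additivity you invoke for $\mathfrak{n}_{\bbq}$, closing the argument. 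One minor slip: $B_{\bbq}\subset P_{i,\bbq}$ is because $P_{i,\bbq}$ is a standard $\bbq$-parabolic containing the minimal one, not via Lemma~\ref{lem: chi_i stable under P_i }, which concerns the stabilizer of the highest-weight line rather than $\fraku_{i}$.
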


\begin{prop}
\label{prop: unique}There exists a neighborhood $W_0$ of zero in $\frakg$ such that any neighborhood of zero $W\subset W_0$ satisfies the following.
Let $A\subset G$ be a connected set and $1\leq i\leq r$. 
Assume that for each $g\in A$ there exists $\gamma_g\in\Gamma$ which is $\left(W,i\right)$-active for $g\in G$. 
Then for any $g,h\in A$ we have $\gamma_{g}^{-1}\gamma_h\in P_i$.
\end{prop}

\begin{proof}
	Let $W_0$ be as in Proposition \ref{prop:W unipotent} and $W\subset W_0$. 
	
	Let $g\in A$. 
	By the continuity of the adjoint representation, there exists a neighborhood $H$ of $g$ such that for any $h\in H$\[
	\mbox{Ad}\left(\gamma_g\right)\calb_i\subset\mbox{span}\left(\frakg_\bbz\cap \mbox{Ad}\left(h^{-1}\right)W\right).
	\]
	This implies that  \[\mbox{Ad}\left(h\gamma_g\right)\fraku_{i},\mbox{Ad}\left(h\gamma_h\right)\fraku_{i}\subset\mbox{span}\left(\mbox{Ad}\left(h\right)\frakg_\bbz\cap W\right).\] By Proposition \ref{prop:W unipotent}, the span of $\mbox{Ad}\left(h\right)\frakg_\bbz\cap W$ is contained in a conjugate of $\mathfrak{n}_0$.
	Then, Proposition \ref{prop: unique} implies $\mbox{Ad}\left(h\gamma_g\right)\fraku_{i}=\mbox{Ad}\left(h\gamma_h\right)\fraku_{i}$. Therefore, $\gamma_{g}^{-1}\gamma_h$ is in the normalizer of $\fraku_{i}$ which is $P_i$. 
	Since $A$ is connected, the conclusion of the proposition follows. 
\end{proof}

We use the notation of \S \ref{sub:representations}. To simplify some of them, in this section we denote $d_{\alpha_i}$ by $d_{i}$ and $\varrho_{\alpha_i}$ by $\varrho_{i}$, for any $1\leq i\leq r$.
Note that if $\Phi_{\bbq}$ is not reduced then $\fraku_{\alpha_{1}},\dots,\fraku_{\alpha_{r}}$ are proper subalgebras of $\fraku_{1},\dots,\fraku_{r}$. 

For any $1\leq i\leq r$ let $\Phi_{\alpha_i}^{c}$ be the set of all real roots which lie in the span of $\Delta_\bbq\setminus\left\{\alpha_i\right\}$. Then for any $1\leq i\leq r$, $P_{i}$ is the subgroup generated by $Z_{G}\left(S\right)$ and $\exp\left(\frakg_{\beta}\right)$ for any $\beta\in\Phi_{\alpha_i}^{c}$ (see \cite[21.11]{key-1}). By the definition of $\Phi_{\alpha_i}$ and $\Phi_{\alpha_i}^{c}$, $1\leq i\leq r$, it is easy to see that for any $\beta\in\Phi_{\alpha_i}$, $\lambda\in\Phi_{\alpha_i}^{c}$, $c_1>0$, and $c_2\geq0$ we have $c_1\beta+c_2\lambda\in\Phi_{\alpha_i}$. Hence, for any $1\leq i\leq r$ the maximality of $P_i$ implies 
\begin{equation}
P_i=N_G\left(\fraku_{\alpha_i}\right)\label{eq:p_i}. 
\end{equation}

For any $1\leq i\leq r$ let $V_i=\bigwedge^{d_i}\frakg$ and \[
v_i=X_{1}\wedge\dots\wedge X_{d_i}\in V_i
\]
where
\begin{equation}\label{B'i}
	\left\{X_1,\dots,X_{d_i}\right\}=\bigcup_{\beta\in\Phi_{\alpha_i}}\calb_{\beta}=:\calb_i^\prime
\end{equation}  is a basis for $\fraku_{\alpha_{i}}$ ($v_i$ is uniquely determined up to a sign).

Fix a norm on $\frakg$. 

The following corollary can be deduced in a similar way to \cite[Cor. 3.3]{key-18} using Proposition \ref{Compactness criterion}, Proposition \ref{prop: unique}, and \eqref{eq:p_i}. 

\begin{cor}\label{cor:active element}
	For any $\epsilon>0$ there exists a neighborhood $W_\epsilon$ of zero in $\frakg$ such that if $W\subset W_\epsilon$ is a neighborhood of zero and $\gamma\in\Gamma$ is $\left(W,i\right)$-active for  $g\in G$, then 
	\[
	\norm{\varrho_{i}\left(g\gamma\right)v_i}<\epsilon.
	\]
\end{cor}

\begin{prop}
\label{prop: unbounded element} There exists a neighborhood $W_0$ of zero in $\frakg$ such that any neighborhood of zero $W_1\subset W_0$ satisfies the following.
Let $g\in G$, $1\leq i\leq r$,
$\Psi\subset\Phi_{\varrho_i}$, and $A$ be a connected subset of $T$
such that $ag$ is $\left(W_1,i\right)$-active for any $a\in A$.
If for any neighborhood $W$ of zero in $\frakg$ and $\lambda\in\Psi$ there is $a\in A$ such that
$ag$ is $\left(W,i\right)$-active and $\lambda\left(a\right)\geq1$,
then there exists $\gamma\in\Gamma$ such that 
\[
\varrho_{i}\left(g\gamma\right)v_{i}\in\bigoplus_{\lambda\in\Phi_{{\varrho_i}}\setminus\Psi}V_{\varrho_{i},\lambda}.
\]
\end{prop}
\begin{proof}
Let $W_0$ be as in Proposition \ref{prop: unique}. 
For $\lambda\in\Phi_{\varrho_i}$ let $\varphi_{\lambda}:V_i\rightarrow V_{\varrho_i,\lambda}$
be the projection associated with the direct sum decomposition $V_i=\bigoplus_{\lambda\in\Phi_{\varrho_i}}V_{\varrho_i,\lambda}$. 

Let $\lambda\in\Psi$. By Proposition \ref{Compactness criterion}, Proposition \ref{prop: unique}, Corollary \ref{cor:active element}, and the assumption, there exist $\gamma\in\Gamma$ and a sequence $\left\{a_{j}\right\}\subset A$ such that 
\begin{equation}
\lambda\left(a_{j}\right)\geq1\mbox{ and }\norm{\varrho_{i}\left(a_{j}g\gamma\right)v_{i}}\underset{j\rightarrow\infty}{\longrightarrow}0.\label{eq: norm going to 0}
\end{equation}
Since any norm on a finite-dimensional vector space is equivalent
to the sup-norm, there exists $c>0$ so that 
\begin{equation}
\norm{\varrho_{i}\left(a_{j}g\gamma\right)v_{i}}\geq c\cdot\norm{\varphi_{\lambda}\left(\varrho_{i}\left(a_{j}g\gamma\right)v_{i}\right)}=c\cdot\lambda\left(a_{j}\right)\norm{\varphi_{\lambda}\left(\varrho_{i}\left(g\gamma\right)v_{i}\right)}.\label{eq: sup-norm bound}
\end{equation}

Equations \eqref{eq: norm going to 0} and \eqref{eq: sup-norm bound} prove the conclusion of the proposition. 
\end{proof}
\begin{thm}
\label{thm: conn component intersects both rays}Assume that $\mbox{rank}_{\bbr}G=\mbox{rank}_{\bbq}G=2$ and $\Phi_\bbq$ is irreducible.
For any $g\in G$ there exists a neighborhood $W_0=W_0\left(g\right)$ of zero in $\frakg$ such that any neighborhood of zero $W\subset W_0$ satisfies the following. Let $\fraka$ be a connected subset of 
\begin{equation}
\fraka^{-}=\left\{ t\in\mathfrak{t}\::\:\chi_{1}\left(t\right)\leq0,\:\omega_{\alpha_{1}}\left(\chi_{1}\right)\left(t\right)\leq0\right\} \label{eq: a- def}
\end{equation}
 such that $\exp\left(a\right)g$ is $\left(W,1\right)$-active
for any $a\in\fraka$. Then there exists $\chi\in\left\{ \chi_{1},\omega_{\alpha_{1}}\chi_{1}\right\} $
such that all $a\in\fraka$ satisfy $\chi\left(a\right)<0$. \end{thm}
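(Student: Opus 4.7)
The plan is to combine Proposition \ref{prop: unique} and Corollary \ref{cor: vi under Pi and Q'0} to see that a suitable translate of the highest-weight vector $v_{1}$ is uniformly shrunk by $\varrho_{1}(\exp(a))$ as $a$ ranges over $\fraka$, then invoke Theorem \ref{thm:weyl elem norm bound} to locate the shrinkage on a Weyl image of $\chi_{1}$, and finally use Lemma \ref{lem: not both pos} to extract a strict sign condition. Because $\mbox{rank}_{\bbr}G=\mbox{rank}_{\bbq}G=2$, one has $T=S$, hence $\Phi_{\bbr}=\Phi_{\bbq}$ and $\chi_{1}^{\prime}=\chi_{1}$; the hypothesis of Theorem \ref{thm:weyl elem norm bound} is satisfied either because $\Phi_{\bbr}$ is reduced or, in the non-reduced case, by using $\varrho_{1}=\varrho_{\alpha_{1}}$ as in \S \ref{sub:The_Non-Reduced_Case }.

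Fix $a_{0}\in\fraka$ and set $u=\varrho_{1}(g\,q(\exp(a_{0})g))v_{1}$. For any $a\in\fraka$, applying Proposition \ref{prop: unique} to the connected set $\{\exp(a)g:a\in\fraka\}$ shows $q(\exp(a)g)^{-1}q(\exp(a_{0})g)\in P_{1,\bbq}\cap Q_{0}^{2}$, and Corollary \ref{cor: vi under Pi and Q'0} then yields a scalar $c(a)$ with $|c(a)|\in(m^{-1},m)$ such that $\varrho_{1}(\exp(a))u=c(a)\,\varrho_{1}(\exp(a)g\,q(\exp(a)g))v_{1}$. Combining with Corollary \ref{cor:active elements}$\left(i\right)$ gives $\norm{\varrho_{1}(\exp(a))u}\leq m\,\epsilon(\eta)$ for every $a\in\fraka$. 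Decomposing $u=\sum_{\lambda\in\Phi_{1,\bbr}}u_{\lambda}$ into $\bbr$-weight components and comparing norms on a finite-dimensional space, one obtains $e^{\lambda(a)}\norm{u_{\lambda}}\leq C_{1}\,\epsilon(\eta)$ for every $\lambda$ with $u_{\lambda}\neq0$ and every $a\in\fraka$.

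Now I need a lower bound on $\norm{u}$ depending only on $g$, not on $a_{0}$. By Lemma \ref{lem: Gamma commensurable}, $m_{1}\varrho_{1}(q(\exp(a_{0})g))v_{1}\in V_{1}(\bbz)\setminus\{0\}$, so its norm is bounded below by a positive constant uniformly in $a_{0}$; since $\varrho_{1}(g)$ is invertible, there exists $C(g)>0$ with $\norm{u}=\norm{\varrho_{1}(g)\varrho_{1}(q(\exp(a_{0})g))v_{1}}\geq C(g)$. Applying Theorem \ref{thm:weyl elem norm bound} to $u=\varrho_{1}(g\,q(\exp(a_{0})g))v_{1}$ produces some $\omega\in W(\Phi_{\bbr})$ with $\norm{u_{\omega(\chi_{1})}}\geq\norm u/c\geq C(g)/c$. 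Choosing $\eta=\eta(g)$ negative enough so that $C_{1}c\,\epsilon(\eta)/C(g)<1$ (and also so that $\eta$ lies below the constant $\eta_{1}$ from Proposition \ref{prop: unique}) forces $\omega(\chi_{1})(a)<0$ for every $a\in\fraka$.

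Finally, Lemma \ref{lem: not both pos} writes $\omega(\chi_{1})=p\chi_{1}+q\,\omega_{\alpha_{1}}(\chi_{1})$ with $p,q$ not both positive. Since $\chi_{1}\leq0$ and $\omega_{\alpha_{1}}(\chi_{1})\leq0$ on $\fraka^{-}$, the case $p\leq0$ and $q\leq0$ would give $\omega(\chi_{1})(a)\geq0$, contradicting the previous step. Hence either $p>0$, in which case $p\chi_{1}(a)<-q\,\omega_{\alpha_{1}}(\chi_{1})(a)\leq0$ and so $\chi_{1}(a)<0$ for all $a\in\fraka$, or $q>0$, which analogously yields $\omega_{\alpha_{1}}(\chi_{1})(a)<0$ for all $a\in\fraka$; either is the required conclusion. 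The main obstacle is the uniform lower bound $\norm u\geq C(g)$ in the choice of $a_{0}$, which crucially uses the integrality of $v_{1}$ via Lemma \ref{lem: Gamma commensurable} and the finiteness of $C_{0}$; a secondary subtlety is the hypothesis of Theorem \ref{thm:weyl elem norm bound} in the non-reduced case, handled by the substitution $\varrho_{1}\to\varrho_{\alpha_{1}}$ described in \S \ref{sub:The_Non-Reduced_Case }.
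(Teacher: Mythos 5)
Your proof is correct and follows essentially the same route as the paper's: fix $a_{0}$, use Proposition \ref{prop: unique} with Corollary \ref{cor: vi under Pi and Q'0} to reduce to a single $q$ while keeping a comparable vector, obtain a uniform upper bound from Corollary \ref{cor:active elements}$(i)$, a lower bound from the discreteness/integrality argument, locate a large Weyl-component via Theorem \ref{thm:weyl elem norm bound}, and finish with Lemma \ref{lem: not both pos}. The only (favorable) difference is that you invoke Lemma \ref{lem: Gamma commensurable} to control the fact that $\varrho_{1}(q)v_{1}$ is rational rather than integral, which patches a small gap the paper glosses over when it asserts the discreteness bound directly for $\varrho_{1}(gq)v_{1}$; and your final sign analysis is a direct derivation where the paper uses a contrapositive, which is merely cosmetic.
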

\begin{proof}
Let the notation be as in the proof of Proposition \ref{prop: unbounded element}. 

Assume $\calb=\left\{X_1,\dots,X_d\right\}$ and denote the $\bbz$-span of \[\left\{X_{i_1}\wedge\cdots\wedge X_{i_{d_1}}\::\:1\leq i_i<\cdots<i_{d_1}\leq d\right\}\subset V_1\]
by $V_1\left(\bbz\right)$.
Then, $\varrho_{1}\left(g\right)V_{1}\left(\bbz\right)$ is a discrete
subset. Therefore, there exists $\epsilon_{0}=\epsilon_{0}\left(g\right)>0$
such that 
\begin{equation}
\forall v\in\varrho_{1}\left(g\right)V_{1}\left(\bbz\right)\qquad\norm v>\epsilon_{0}.\label{eq: bigger epsilon}
\end{equation}

Let $c$ satisfy the conclusions of Theorem \ref{thm:weyl elem norm bound}. Let $W$ satisfy the conclusion of Corollary \ref{cor:active element} for $\epsilon=\frac{\epsilon_0}{2c}$. Without loss of generality assume $W\subset W_0$. 
Then, according to Corollary \ref{cor:active element} and Proposition \ref{prop: unique} there is $\gamma\in\Gamma$ such that for all $a\in\fraka$
\begin{equation}
\norm{\varrho_{1}\left(\exp\left(a\right)g\gamma\right)v_{1}}<\epsilon=\frac{\epsilon_{0}}{2c}.\label{eq: smaller epsilon}
\end{equation}

For any $\lambda\in\Phi_{\varrho_1}$ we have 
\begin{equation}
\varphi_{\lambda}\left(\varrho_{1}\left(\exp\left(a\right)g\gamma\right)v_{1}\right)=\exp\left(\lambda\left(a\right)\right)\varphi_{\lambda}\left(\varrho_{1}\left(g\gamma\right)v_{1}\right).\label{eq: after exp}
\end{equation}

Let $s\in W\left(\Phi_{\bbr}\right)$ satisfy 
\[
\norm{\varphi_{s\left(\chi_{1}\right)}\left(\varrho_{1}\left(g\gamma\right)v_{1}\right)}=\max_{\omega\in W\left(\Phi_{\bbr}\right)}\norm{\varphi_{\omega\left(\chi_{1}\right)}\left(\varrho_{1}\left(g\gamma\right)v_{1}\right)}.
\]
Then, by Theorem \ref{thm:weyl elem norm bound} and (\ref{eq: bigger epsilon})
\begin{equation}
\norm{\varphi_{s\left(\chi_{1}\right)}\left(\varrho_{1}\left(g\gamma\right)v_{1}\right)}>\frac{\epsilon_{0}}{c}.\label{eq: proj chi big}
\end{equation}
By Lemma \ref{lem: not both pos}, there exist $b_{1},b_{2}$, not
both positive, such that $s\left(\chi_{1}\right)=b_{1}\chi_{1}+b_{2}\omega_{\alpha_{1}}\left(\chi_{1}\right)$.
If $b_{1}\leq0$ and there exists $a\in\fraka$ such that $\omega_{\alpha_{1}}\chi_{1}\left(a\right)=0$,
then by (\ref{eq: a- def}), $s\left(\chi_{1}\right)\left(a\right)\geq0$.
Using (\ref{eq: after exp}), we arrive at 
\[
\norm{\varphi_{s\left(\chi_{1}\right)}\left(\varrho_{1}\left(\exp\left(a\right)g\gamma\right)v_{1}\right)}\geq\norm{\varphi_{s\left(\chi_{1}\right)}\left(\varrho_{1}\left(g\gamma\right)v_{1}\right)}.
\]
But then, using (\ref{eq: smaller epsilon}) and (\ref{eq: proj chi big})
we obtain a contradiction. Hence, all $a\in\fraka$ satisfy $\omega_{\alpha_{1}}\chi_{1}\left(a\right)<0$.
In a similar way, if $b_{2}\leq0$, then all $a\in\fraka$ satisfy
$\chi_{1}\left(a\right)<0$. 
\end{proof}

\section{\label{sec:Proof-of-Theorem2}Proof of Theorem \ref{thm: all obvoius}}

We keep the notation of \S \ref{sec:Compactness-criterion}.

For the proof of Theorem \ref{thm: all obvoius} we will need the
following results:
\begin{thm}[Lebesgue]
\label{thm:Lebesgue}\cite[Thm 4.3]{key-4} Let the two-dimensional
unit cube $\mathcal{I}=\left[0,1\right]^{2}$ be covered by a pair
of closed sets $X_{1},X_{2}$. For $i=1,2$ let $F_{i}^{+}$ and $F_{i}^{-}$
be the facets of $\mathcal{I}$ defined by $x_{i}=1$ and $x_{i}=0$
respectively. Then some connected component of $X_{i}$, $i\in\left\{ 1,2\right\} $,
intersects both the corresponding opposite facets $F_{i}^{+}$ and
$F_{i}^{-}$. 
\end{thm}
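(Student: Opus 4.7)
The approach is by contradiction via Brouwer's fixed point theorem, along classical lines for Lebesgue's covering theorem in dimension $2$. Suppose for each $i \in \{1,2\}$ that no connected component of $X_i$ meets both $F_i^-$ and $F_i^+$. The key topological input is the standard separation theorem for compact Hausdorff spaces: if $A, B$ are disjoint closed subsets of such a space $K$ and no component of $K$ intersects both, then $K$ decomposes as $K^- \sqcup K^+$ with $K^\pm$ relatively clopen and $A \subset K^-$, $B \subset K^+$. Applied to $K = X_i$, $A = X_i \cap F_i^-$, $B = X_i \cap F_i^+$ (disjoint since the two facets are), this yields a decomposition $X_i = X_i^- \sqcup X_i^+$ into disjoint relatively closed subsets with $X_i \cap F_i^\pm \subset X_i^\pm$.

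The next step is to promote this discrete separation to a numerical one. Observe that $X_i^- \cup F_i^-$ and $X_i^+ \cup F_i^+$ are disjoint closed subsets of $\mathcal{I}$: a point in both would lie in $X_i^\mp \cap F_i^\pm \subset X_i^- \cap X_i^+ = \emptyset$. Urysohn's lemma then supplies a continuous $f_i : \mathcal{I} \to [0,1]$ with $f_i \equiv 0$ on $X_i^- \cup F_i^-$ and $f_i \equiv 1$ on $X_i^+ \cup F_i^+$.

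Finally, define $F : \mathcal{I} \to \mathcal{I}$ by $F(x) = (1-f_1(x),\, 1-f_2(x))$. Brouwer's fixed point theorem furnishes $x^* \in \mathcal{I}$ with $x_i^* = 1 - f_i(x^*)$ for $i = 1,2$. Since $\mathcal{I} = X_1 \cup X_2$, there exists $i$ with $x^* \in X_i = X_i^- \sqcup X_i^+$. If $x^* \in X_i^-$ then $f_i(x^*) = 0$ forces $x_i^* = 1$, placing $x^*$ in $X_i \cap F_i^+ \subset X_i^+$ and contradicting $X_i^- \cap X_i^+ = \emptyset$; the case $x^* \in X_i^+$ is symmetric. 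This closes the contradiction.

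The main obstacle is the clopen separation $X_i = X_i^- \sqcup X_i^+$, which rests on the nontrivial fact that in a compact Hausdorff space connected components agree with quasi-components (a Zorn/chain-of-clopen-sets argument). Once that is available the rest is a clean combination of Urysohn's lemma and Brouwer's fixed point theorem in $\bbr^2$, both classical.
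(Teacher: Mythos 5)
The paper cites this theorem to \cite{key-4} without reproducing an argument, so there is no in-paper proof to compare against; I am therefore only checking your proof for correctness. It is correct and complete. The crucial separation step is properly justified: in a compact Hausdorff space components coincide with quasi-components, and if no component of $X_i$ meets both of the disjoint closed sets $X_i\cap F_i^-$ and $X_i\cap F_i^+$, then the component of each $a\in X_i\cap F_i^-$ misses $X_i\cap F_i^+$; compactness of $X_i\cap F_i^+$ produces a clopen neighborhood of $a$ in $X_i$ missing it, and compactness of $X_i\cap F_i^-$ then yields the global clopen splitting $X_i=X_i^-\sqcup X_i^+$. Your verification that $X_i^-\cup F_i^-$ and $X_i^+\cup F_i^+$ are disjoint closed subsets of $\mathcal{I}$ is exactly right (it uses $X_i\cap F_i^\pm\subset X_i^\pm$), and the Urysohn/Brouwer finish is airtight, including the case analysis at the fixed point. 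One cosmetic remark: the argument does not need $X_i$ nonempty or the traces $X_i\cap F_i^\pm$ nonempty, and your construction handles these degenerate cases automatically since the separation lemma still produces a (possibly empty) clopen $X_i^-$. This is the classical elementary proof of Lebesgue's covering theorem in dimension two via Brouwer; the reference \cite{key-4} instead obtains the result by intersection-theoretic methods on toric varieties, so your route is different from the cited source but entirely standard and self-contained.
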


\begin{thm}[Riemann]
\label{thm:Riemann}\cite[\S II]{key-20} Let $D$ denote a simply
connected domain in $\bbc$ that has more than one boundary point.
Then, there exists an analytic one-to-one mapping of $D$ onto the
disk $\left\{ \left|z\right|<1\right\} $. If the boundary of $D$
is a piecewise smooth curve, then the mapping extends in a unique
analytic one-to-one way to the closure of $D$.
\end{thm}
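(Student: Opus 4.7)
The plan is to follow the classical normal-families proof (Montel plus extremal method) for the existence part, and to combine Carath\'eodory's theorem with Schwarz reflection for the boundary regularity. Fix a basepoint $z_{0}\in C$ and consider the family
\[
\mathcal{F}=\left\{ f:C\to\mathbb{D}\::\: f\text{ holomorphic, injective},\; f\left(z_{0}\right)=0,\; f^{\prime}\left(z_{0}\right)>0\right\},
\]
where $\mathbb{D}=\left\{ z\in\bbc\::\:\left|z\right|<1\right\} $. The existence claim will follow from three substeps: (a) $\mathcal{F}\neq\emptyset$; (b) the supremum $M=\sup_{f\in\mathcal{F}}f^{\prime}\left(z_{0}\right)$ is attained; (c) any extremizer is surjective onto $\mathbb{D}$.

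For (a), I would exploit the two-boundary-points hypothesis: after a M\"obius change of coordinates I may assume $0,\infty\notin C$, so that $C$ is a simply connected subset of $\bbc\setminus\left\{ 0\right\} $. Since $C$ is simply connected, a single-valued branch $s\left(z\right)=\sqrt{z}$ is defined on $C$, and the key observation is that $s\left(C\right)\cap\left(-s\left(C\right)\right)=\emptyset$. Hence $s\left(C\right)$ is open and misses an entire disk, so its reciprocal (after translation) maps $C$ injectively and holomorphically into a bounded region of $\bbc$; composing with a rescaling and with a disk automorphism normalizing $z_{0}\mapsto 0$ and the derivative to be positive yields an element of $\mathcal{F}$.

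For (b), every $f\in\mathcal{F}$ is bounded by $1$, so $\mathcal{F}$ is a normal family by Montel's theorem. Taking $f_{n}\in\mathcal{F}$ with $f_{n}^{\prime}\left(z_{0}\right)\to M$ and extracting a locally uniformly convergent subsequence produces a limit $f$ with $f\left(z_{0}\right)=0$, $f^{\prime}\left(z_{0}\right)=M$ and $\left|f\right|\leq 1$; since $M>0$ the limit is nonconstant, so by Hurwitz's theorem it is injective, and by the maximum principle $\left|f\right|<1$, so $f\in\mathcal{F}$. For (c), suppose for contradiction that $\omega\in\mathbb{D}\setminus f\left(C\right)$: the M\"obius automorphism $\phi_{\omega}\left(z\right)=\left(z-\omega\right)/\left(1-\bar\omega z\right)$ makes $\phi_{\omega}\circ f$ nonvanishing on $C$, so on the simply connected $C$ a single-valued branch $g=\sqrt{\phi_{\omega}\circ f}$ exists; composing with the disk automorphism sending $g\left(z_{0}\right)$ back to $0$ with positive derivative produces some $F\in\mathcal{F}$ whose derivative at $z_{0}$ equals $\frac{1+\left|\omega\right|}{2\sqrt{\left|\omega\right|}}M$. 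Since $\left|\omega\right|<1$ gives $\left(\sqrt{\left|\omega\right|}-1\right)^{2}>0$, this factor strictly exceeds $1$, contradicting maximality. Hence $f$ is surjective.

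For the boundary extension, when $\partial C$ is a piecewise smooth Jordan curve I would first invoke Carath\'eodory's theorem, which (via prime ends) extends the conformal isomorphism $f:C\to\mathbb{D}$ to a homeomorphism $\bar{C}\to\bar{\mathbb{D}}$; across the interior of each smooth arc of $\partial C$ the Schwarz reflection principle then upgrades this continuous extension to an analytic one into a two-sided neighborhood of the arc, and uniqueness of the extension follows from the identity theorem. The main obstacle is step (c), the Koebe square-root argument, where the strict-derivative inequality must be verified carefully and the use of simple connectedness (for existence of the branch of the square root) is essential; the boundary extension likewise rests on the nontrivial Carath\'eodory theorem, which I would cite rather than reprove.
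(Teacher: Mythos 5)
This statement is not proved in the paper at all: it is the classical Riemann mapping theorem together with boundary behaviour, quoted verbatim with a citation to Goluzin, and the paper only ever uses the resulting homeomorphic extension to the closure (to build the exhausting sets $I_{k}$ in \S 6). Your outline is the standard normal-families proof and is essentially correct: the square-root trick in (a) (injectivity of $s$ forces $s\left(C\right)\cap\left(-s\left(C\right)\right)=\emptyset$, and the open mapping theorem then supplies an omitted disk), Montel plus Hurwitz in (b), and the Koebe derivative computation $\frac{1+\left|\omega\right|}{2\sqrt{\left|\omega\right|}}>1$ in (c) are all right; you should also note that $M<\infty$ by the Cauchy estimate on a disk about $z_{0}$ contained in $C$, and that $\omega\neq0$ since $0=f\left(z_{0}\right)\in f\left(C\right)$, so the factor is well defined.

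The one real caveat is in the boundary step. The Schwarz reflection principle requires a free \emph{real-analytic} boundary arc; for an arc that is merely smooth (say $C^{1}$ or $C^{\infty}$) there is no anticonformal reflection to extend across, and one must instead invoke the Kellogg--Warschawski theorem to get smoothness of the boundary correspondence up to the arc. Moreover, even for polygons the map is not conformal at the corner points (the Schwarz--Christoffel formula has power singularities there), so ``analytic one-to-one on the closure'' should be read as: homeomorphic on $\overline{C}$ (Carath\'eodory, which does apply to bounded Jordan domains with piecewise smooth boundary) and conformal across the interior of each analytic boundary arc. Since the paper only needs the Carath\'eodory homeomorphism, this does not affect its use of the theorem, but as written your reflection step would fail for non-analytic smooth arcs.
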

Replacing $\Delta_{\bbq}$ with $-\Delta_{\bbq}$, we may assume 
\[
\mathfrak{a}^{+}=\left\{ t\in\mathfrak{t}\::\:\chi_{1}\left(t\right)\leq0,\;\omega_{\alpha_{1}}\left(\chi_{1}\right)\left(t\right)\leq0\right\} .
\]

Recall $A^{+}=\exp\left(\mathfrak{a}^{+}\right)$. Let $A^{+}\subset A\subset T$
be a closed cone and let $g\in G$. Suppose that $A\pi\left(g\right)$
is a divergent trajectory. We need to prove that it is an obvious divergent trajectory. 

Let $W_0$ satisfy the conclusions of Proposition \ref{prop: unbounded element} and Theorem \ref{thm: conn component intersects both rays}. Let $W_1$ be an open neighborhood of zero in $\frakg$ so that its closer satisfies  $\overline{W_1}\subset W_0$. 

Fix a norm on $\frakg$ and denote 
\begin{eqnarray*}
	&  & \fraka^{+}\left(r,R\right)=\left\{ a\in\fraka^{+}\::\: r\leq\norm a\leq R\right\} ,\\
	&  & \fraka^{+}\left(r\right)=\left\{ a\in\fraka^{+}\::\:\norm a\geq r\right\} ,\\
	&  & \fraka_{0}^{+}\left(r\right)=\left\{ a\in\fraka^{+}\::\:\norm a=r\right\} .
\end{eqnarray*}

According to Proposition \ref{Compactness criterion} and Remark \ref{rem:compact} there exists a compact subset $C\subset A$ such that for any
$a\in A\setminus C$ there are $1\leq i\leq r$ and $\gamma\in\Gamma$ which is $\left(W_1,i\right)$-active for $ag$.
For $i=1,2$ denote\[
\frakd_{i}=\left\{a\in \fraka\::\: \exists\gamma\in\Gamma\mbox{ which is }\left(W_1,i\right)\mbox{-active for }\exp\left(a\right)g\right\}.
\label{eq: d_i def}
\]
Then, there exists $r_0>0$ such that $\frakd_{1}$, $\frakd_{2}$
is a cover of $\fraka^{+}\left(r_{0}\right)$. Moreover, it follows from Definition \ref{def:active} that for $i=1,2$ the set $\frakd_{i}$ is open in $\fraka=\log\left(A\right)$ and satisfies
\begin{equation}
\overline{\frakd}_i\subset\left\{a\in \fraka\::\: \exists\gamma\in\Gamma\mbox{ which is }\left(W_0,i\right)\mbox{-active for }\exp\left(a\right)g\right\}.
\label{eq:d_i subset}
\end{equation} 

We first claim 
\begin{equation}
\forall r>r_{0}\;\exists\mathcal{P}\mbox{ an unbounded connected component of }\overline{\frakd}_{2}\mbox{ such that }\mathcal{P}\cap\fraka_{0}^{+}\left(r\right)\neq\emptyset.\label{eq: unbounded conn comp}
\end{equation}
Assume otherwise. Let $r_1>r_{0}$.
Since $\frakd_{2}$ is an open set and $\fraka^{+}\left(r_1,r_1+1\right)$
is a compact set there are only finitely many connected components of $\frakd_{2}$ which intersect $\fraka^{+}\left(r_1,r_1+1\right)$
and the set 
\[
S_{1}=\left\{ \mathcal{P}\mbox{ is a connected component of }\overline{\frakd_{2}},\;\mathcal{P}\cap\mathfrak{a}_{0}^{+}\left(r_1\right)\neq\emptyset\right\} 
\]
 is finite. Let $R_{1}>\max\left(\norm a\::\:\exists\mathcal{P}\in S_{1}\mbox{ s.t. }a\in\mathcal{P}\right)$.
Then there is no connected component of $\overline{\frakd_{2}}$ which
intersects both $\mathfrak{a}_{0}\left(r_{1}\right),\mathfrak{a}_{0}\left(R_{1}\right)$.

The boundary of $\fraka^{+}$ is the union of the following two rays
from the origin
\[
\left\{ a\in\fraka^{+}\::\:\chi_{1}\left(a\right)=0\right\} \;\mbox{ and }\;\left\{ a\in\fraka^{+}\::\:\omega_{\alpha_{1}}\chi_{1}\left(a\right)=0\right\} .
\]
Denote
\begin{equation}
\begin{array}{l}
\mathfrak{l}_{1}\left(r,R\right)=\left\{ a\in\mathfrak{a}^{+}\left(r,R\right)\::\:\chi_{1}\left(a\right)=0\right\} ,\\
\mathfrak{l}_{2}\left(r,R\right)=\left\{ a\in\mathfrak{a}^{+}\left(r,R\right)\::\:\omega_{\alpha_{1}}\chi_{1}\left(a\right)=0\right\} .
\end{array}\label{eq: l_i def}
\end{equation}

Fix a homeomorphism from $\mathfrak{a}^{+}\left(r_{1},R_{1}\right)$
to the two-dimensional unit cube $\mathcal{I}$ which sends $\mathfrak{l}_{1}\left(r_{1},R_{1}\right),\mathfrak{l}_{2}\left(r_{1},R_{1}\right)$
to $F_{1}^{+},F_{1}^{-}$ and $\mathfrak{a}_{0}\left(r_{1}\right),\mathfrak{a}_{0}\left(R_{1}\right)$
to $F_{2}^{+},F_{2}^{-}$. Then we may apply Theorem \ref{thm:Lebesgue}
with $\overline{\mathcal{\frakd}_{1}}$, $\overline{\frakd_{2}}$
as the cover of $\mathfrak{a}^{+}\left(r_{1},R_{1}\right)$
to deduce that there is a connected component of $\overline{\frakd_{1}}$
which intersects both $\mathfrak{l}_{1}\left(r_{1},R_{1}\right)$,
$\mathfrak{l}_{2}\left(r_{1},R_{1}\right)$, a contradiction
to Theorem \ref{thm: conn component intersects both rays}, proving  \eqref{eq: unbounded conn comp}. 

Let $\mathcal{D}_{1},\mathcal{D}_{2}$ be the sets of all unbounded connected components of $\overline{\frakd_{1}},\overline{\frakd_{2}}$,
respectively. Let $\mathcal{D}=\mathcal{D}_{1}\cup\mathcal{D}_{2}$.
Then by (\ref{eq: unbounded conn comp}) $\mathcal{D}$ is not empty. 

Recall that for $i=1,2$, $\varrho_i:G\rightarrow\mbox{GL}\left(V_i\right)$ is a strongly rational over $\bbq$-representation with a set of $\bbr$-weights $\Phi_{{\varrho_i}}$, an $\bbr$-highest weight $\chi_i$, and $v_i$ a non-zero $\bbr$-weight vectors for $\chi_i$. For $\lambda\in\Phi_{\varrho_i}$ let $\varphi_\lambda:V_i\rightarrow V_{\varrho_{i},\lambda}$ be the
projection associated with the direct sum decomposition $V_i=\bigoplus_{\lambda\in\Phi_{\varrho_i}}V_{\varrho_i,\lambda}$. 

Let $i=1,2$ and $\mathcal{P}\in\mathcal{D}_i$. It follows from \eqref{eq:d_i subset} Proposition \ref{Compactness criterion}, Proposition \ref{prop: unbounded element}, and our assumption that $A\pi\left(g\right)$ diverge that there exists $\gamma_\mathcal{P}\in\Gamma$ such that for any unbounded $\left\{a_{k}\right\}\subset A$
\[
\varrho_{i}\left(ag\gamma_\mathcal{P}\right)v_{i}\rightarrow 0\mbox{ as }k\rightarrow\infty
\]
Denote $v_{\mathcal{P}}=\varrho_{i}\left(g\gamma_\mathcal{P}\right)v_{i}$. Denote by $\Psi_{\mathcal{P}}$ the set of weights $\lambda\in\Phi_{\varrho_{i}}$ such that $\varphi_\lambda\left(\varrho_{i}\left(g\gamma_\mathcal{P}\right)v_{i}\right)
\neq0$. 

Let 
\begin{eqnarray*}
 &  & \Upsilon_{i}=\left\{ \Psi_{\mathcal{P}}\;:\;\mathcal{P}\in\mathcal{D}_{i}\right\} ,\quad i=1,2,\\
 &  & \mbox{and }\quad\Upsilon=\Upsilon_{1}\cup\Upsilon_{2}
\end{eqnarray*}
Since both $\Phi_{\varrho_1}$, $\Phi_{\varrho_2}$ are finite, $\Upsilon_{1},\Upsilon_{2}$ are also finite. 

If for any $a\in\fraka$ there exists $\Psi\in\Upsilon$ such that
$\lambda\left(a\right)<0$ for all $\lambda\in\Psi$, then
we are done. Indeed, let $\left\{ a_{k}\right\}\subset\fraka$ be an unbounded sequence. 
Since $\Upsilon$ is finite, there are $i=1,2$ and $\Psi\in\Upsilon_i$ such that for some subsequence $\left\{ a_{k_{\ell}}\right\} $, $\Psi_{a_{k_{\ell}}}=\Psi$. Then, for $\mathcal{P}\in\mathcal{D}_{i}$ which satisfies $\Psi=\Psi_{\mathcal{P}}$ we have
\begin{equation}
\norm{\varrho_{i}\left(\exp\left(a_{k_{\ell}}\right)\right)v_{\mathcal{P}}}\leq\max_{\lambda\in\Psi}\left(\exp\left(\lambda\left(a_{k_{\ell}}\right)\right)\right)\cdot\norm{v_{\mathcal{P}}}\rightarrow0\quad\mbox{as}\quad k\rightarrow\infty.\label{eq: end of proof}
\end{equation} 
Note that it follows from \eqref{eq:p_i} that the second part of Definition \ref{degenerate} is satisfied for any $v_\mathcal{P}$, $\mathcal{P}\in\mathcal{D}$. 

Otherwise; there exists $a_{0}\in\fraka$ such
that $\norm{a_{0}}=1$ and for any $\Psi\in\Upsilon$ there exists $\lambda\in\Psi$
such that $\lambda\left(a\right)\geq0$. 

Denote 
\begin{eqnarray*}
 &  & \fraka\left(r,R\right)=\left\{ a\in\fraka\::\: r\leq\norm a\leq R\right\},\\
 &  & \fraka\left(r\right)=\left\{ a\in\fraka\::\:\norm a\geq r\right\},\\
 &  & \fraka_{0}\left(r\right)=\left\{ a\in\fraka\::\:\norm a=r\right\},
\end{eqnarray*}
and
\[
\epsilon_0=\min\left\{ \norm{\varphi_{\lambda}\left(v_{\Psi}\right)}\::\:\Psi\in\Upsilon\right\}.
\]
Since $\Phi_{\varrho_1}$, $\Phi_{\varrho_2}$ are finite sets, and $g\Gamma$ is a discrete set, $\epsilon_0$ is non-zero. 

Let $W_2\subset W_0$ be an open set which satisfies the conclusion of Corollary \ref{cor:active element} for $0<\epsilon<\epsilon_0$.
We can repeat the above arguments with $W_2$ instead of $W_0$ (and maybe get a larger $\epsilon_0$).  
In that case, by Proposition \ref{prop: unbounded element} any $\mathcal{P}\subset\mathcal{D}$ satisfies\[
\mathcal{P}\cap\fraka\left(r_{1}\right)\subset\left\{ a\in\fraka\left(r_{1}\right)\::\:\forall\lambda\in\Psi_{\mathcal{P}},\quad\lambda\left(a\right)<0\right\}.
\]
Hence, our choise of $\epsilon$ implies
\begin{equation}
\forall\mathcal{P}\in\mathcal{D},\quad\mathcal{P}\cap\left\{ ta_{0}\::\: t\geq r_{1}\right\} =\emptyset.
\end{equation}

Since $\fraka$ is a closed cone, there exist $\mathfrak{l}_{1},\mathfrak{l}_{2}\subset\fraka$
two disjoint rays from the origin so that their union is the boundary
of $\fraka$. Let $\mathcal{P}_{1},\mathcal{P}_{2}\in\mathcal{D}\cup\left\{ \mathfrak{l}_{1},\mathfrak{l}_{2}\right\} $
be the sets adjacent to the line $\left\{ ta_{0}\::\: t\geq r_{1}\right\} $
on the line $\fraka_{0}\left(r_{2}\right)$. Note that by (\ref{eq: unbounded conn comp})
we may assume $\mathcal{P}_{1}\in\mathcal{D}$. Let $\mathfrak{r}$
be the maximal closed connected subset of $\fraka\left(r_{1}\right)$
such that $\left\{ ta_{0}\::\: t\geq r_{1}\right\} \subset\mathfrak{r}$
and $\mathfrak{r}$ does not intersect the interior of $\mathcal{P}_{1}\cup\mathcal{P}_{2}$.
See Figure \ref{fig:2}. 

\begin{figure}[h]
\includegraphics[scale=0.7]{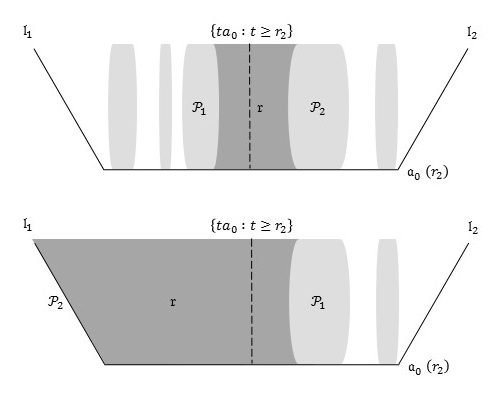}\protect\caption{\label{fig:2}Either $\mathfrak{r}$ has elements of $\mathcal{D}$
on both sides, or one of the boundaries of $\mathfrak{r}$ is a boundary
of the cone.}
\end{figure}

Without loss of generality assume $\mathcal{P}_{1}\subset\overline{\frakd_{2}}$.
As before, since $\frakd_{2}$ is an open set and $\fraka\left(r_{1},r_{1}+1\right)$
is a compact set there are only finitely many connected components
of $\frakd_{1}$ which intersect $\fraka\left(r_{1},r_{1}+1\right)$
and the set 
\[
S_{2}=\left\{ \mathcal{P}\mbox{ is a connected component of }\overline{\frakd_{1}},\;\mathcal{P}\cap\mathfrak{r}\cap\mathfrak{a}_{0}\left(r_{2}\right)\neq\emptyset\right\} \setminus\left\{ \mathcal{P}_{2}\right\} 
\]
 is finite. Let $R_{2}>\max\left(\norm a\::\:\exists\mathcal{P}\in S_{2}\mbox{ s.t. }a\in\mathcal{P}\right)$.
Then there is no connected component of $\overline{\frakd_{1}}$ which
intersects both $\mathfrak{a}_{0}\left(r_{1}\right),\mathfrak{a}_{0}\left(R_{2}\right)$.

By the definition of $\mathfrak{r}$, $\mathfrak{r}\cap\mathfrak{a}\left(r_{1},R_{2}\right)$
is a bounded simply connected domain. Let 
\begin{eqnarray*}
 &  & F_{1}^{-}=\mathfrak{r}\cap\fraka_{0}\left(r_{1}\right),\quad F_{1}^{+}=\mathfrak{r}\cap\mathfrak{a}_{0}\left(R_{2}\right).\\
 &  & F_{2}^{-}=\mathfrak{r}\cap\overline{\mathcal{P}_{1}}\cap\mathfrak{a}\left(r_{1},R_{2}\right),\quad F_{2}^{+}=\mathfrak{r}\cap\overline{\mathcal{P}_{2}}\cap\mathfrak{a}\left(r_{1},R_{2}\right),
\end{eqnarray*}
Using Theorem \ref{thm:Riemann}, one can construct a sequence of
sets $I_{k}\subset\mathfrak{r}\cap\mathfrak{a}\left(r_{1},R_{2}\right)$
which satisfy the following:
\begin{enumerate}
\item The boundary of each $I_{k}$ is a disjoint union (up to the end points)
of four simple smooth curves $F_{1}^{\pm}\left(I_{k}\right),F_{2}^{\pm}\left(I_{k}\right)$,
such that for $i=1,2$, $F_{i}^{+}\left(I_{k}\right)$ and $F_{i}^{-}\left(I_{k}\right)$
are opposite. 
\item Any $\sigma\in\left\{ +,-\right\} $, $i\in\left\{ 1,2\right\} $,
satisfy $F_{i}^{\sigma}\left(I_{k}\right)\rightarrow F_{i}^{\sigma}$
as $k\rightarrow\infty$. 
\end{enumerate}
Moreover, Theorem \ref{thm:Riemann} implies each $I_{k}$ is conformally
equivalent to $\mathcal{I}$ with facets $F_{i}^{\sigma}$, $\sigma\in\left\{ +,-\right\} $,
$i\in\left\{1,2\right\} $. Since $\mathfrak{r}$ is covered by $\frakd_{1}\cup\frakd_{2}$,
so is $I_{k}$, $k\in\bbn$. Therefore, for any $k\in\bbn$ we may
use Theorem \ref{thm:Lebesgue} with $\overline{\frakd_{1}},\overline{\frakd_{2}}$
as a closed cover of $I_{k}$ with facets $F_{1}^{\pm},F_{2}^{\pm}$,
to deduce that for some $i\in\left\{1,2\right\} $ there exists a
connected component $\mathcal{P}_{i,k}$ of $\overline{\frakd_{i}}$
which intersects both $F_{i}^{\pm}\left(I_{k}\right)$. Then there
exists $i\in\left\{1,2\right\} $ which appears infinitely many times
in the sequence $i\left(1\right),i\left(2\right),\dots$. Since $\mathfrak{a}\left(r_{1},R_{2}\right)$
is compact, there exists a nonempty limit set $\mathcal{P}_{i,k}\rightarrow\mathcal{P}_{3}$.
Then, by the definition of $R_{2}$, $\mathcal{P}_{3}\subset\overline{\frakd_{2}}$
and intersects $F_{2}^{\pm}$. Since any connected component of $\overline{\frakd_{2}}$
which intersects $\mathcal{P}_{1}$ is $\mathcal{P}_{1}$, we may
deduce $\mathcal{P}_{1}$ intersects $\mathcal{P}_{2}$ in $\mathfrak{r}$,
a contradiction to the definition of $\mathfrak{r}$.

\section{\label{sec:Proof-of-Theorem 1}Proof of Theorem \ref{thm: exists non-obvious}}

Replacing $\Delta_{\bbq}$ with $-\Delta_{\bbq}$, we may assume 
\begin{equation}
\mathfrak{a}_{\epsilon}=\left\{ t\in\mathfrak{t}\::\:\chi_{1}^{\prime}\left(t\right)\leq-\epsilon\norm t,\;\chi_{2}^{\prime}\left(t\right)\leq-\epsilon\norm t\right\}\label{eq:a- def}
\end{equation}
for some $\epsilon>0$.

We preserve the notation of \S \ref{sec:Compactness-criterion}.

\begin{thm}
	\label{thm: weiss for epsilon case}\cite[Thm. 4.5]{key-5} Suppose
	$G$ is a semisimple $\bbq$-algebraic group, $\Gamma=G\left(\bbz\right)$,
	and $A\subset G$ is a closed cone. Suppose that for $i=1,2$ there
	are $\bbq$-representations $\varsigma_{i}:G\rightarrow\mbox{GL}\left(W_{i}\right)$
	and non-zero vectors $w_{i}\in W_{i}\left(\bbq\right)$ such that the
	following hold: 
	\begin{enumerate}
		\item For any divergent (in $G$) sequence $\left\{a_{k}\right\}\subset A$
		we have $\varsigma_{i}\left(a_{k}\right)w_{i}\underset{k\rightarrow\infty}{\longrightarrow}0$
		for both $i$. 
		\item The groups $Q_{i}=\left\{ g\in G\::\:\bar{\varrho}_{i}(g)w_{i}\in\bbr w_{i}\right\}$,
		$i=1,2$, are $\bbq$-parabolic subgroups of $G$ and $Q_{1},\:Q_{2}$
		generate $G$. 
	\end{enumerate}
	Then there is $x\in G$ such that $A\pi\left(x\right)$ is divergent,
	but for any one-parameter semigroup $\left\{ \exp\left(ta\right)\::\: t\geq0\right\} \subset A$,
	any $\bbq$-representation $\varsigma:G\rightarrow\mbox{GL}\left(W\right)$
	and any non-zero $w\in W\left(\bbq\right)$ we have 
	\[
	\varsigma\left(\exp\left(ta\right)x\right)w\underset{t\rightarrow+\infty}{\not\longrightarrow}0.
	\]
	In particular there are non-obvious divergent trajectories for $A$.
\end{thm}

In the notations of \S \ref{sec:Compactness-criterion} let $w_{i}=v_i$ and $\varsigma_{i}=\varrho_{i}$ for any $i=1,2$. In order to prove
Theorem \ref{thm: exists non-obvious} it is enough to prove that $v_{1},v_{2},\varrho_{1}$,
and $\varrho_{2}$ satisfy conditions $\left(1\right)$ and
$\left(2\right)$ of Theorem \ref{thm: weiss for epsilon case} for
any $A\subset A_{\epsilon}$.

Let $\left\{\exp\left(a_{k}\right)\right\}\subset A$
be a divergent sequence. Then $\norm{a_{k}}\rightarrow\infty$ as
$k\rightarrow\infty$. Without loss of generality we may assume that
the norm defined on $\frakt$ is the sup-norm defined using a basis
which contains only $\bbq$-weight vectors. Thus, using \eqref{eq: V_chi_beta def} and \eqref{eq: a- def} for $i=1,2$ we have
\[
\norm{\varrho_{i}\left(\exp\left(a_{k}\right)\right)v_{i}}=e^{\tilde{\chi}_{i}\left(a_{k}\right)}\norm{v_{i}}<e^{-\epsilon\norm{a_{k}}}\norm{v_{i}}\underset{k\rightarrow\infty}{\longrightarrow}0,
\]
hence $\left(1\right)$ is satisfied. 

Since $P_1,P_2$ are maximal $\bbq$-parabolic subgroups of $G$, according to  \eqref{eq:p_i} in the notation of Theorem \ref{thm: weiss for epsilon case} we have $Q_{i}=P_{i,\bbq}$ for $i=1,2$. The maximality of $P_1,\:P_2$ implies that condition $\left(2\right)$ is also satisfied.

\end{document}